\documentclass[11pt]{article}

\usepackage[T1]{fontenc}
\usepackage{textcomp}
\usepackage{newtxtext}

\makeatletter
\let\openbox\@undefined
\@ifundefined{cprime}{}{}
\@ifundefined{cdprime}{}{}
\@ifundefined{cyrdash}{}{}
\makeatother

\usepackage{geometry}
\usepackage{graphicx,color}
\usepackage[dvipsnames]{xcolor}
\usepackage{amsmath,amsthm, amsfonts, amssymb}
\usepackage{mathrsfs}
\usepackage{mathtools}
\usepackage{url}

\newtheorem{thm}{Theorem}[subsection]
\newtheorem{lmm}[thm]{Lemma}
\newtheorem{prp}[thm]{Proposition}
\newtheorem{dfn}[thm]{Definition}
\newtheorem{rmk}[thm]{Remark}
\newtheorem{cor}[thm]{Corollary}

\newtheorem{exm}[thm]{Example}

\usepackage{hyperref}
\hypersetup{
	bookmarks=true,
	bookmarksdepth=tocdepth,
	bookmarksnumbered=true,
	colorlinks=true,
	linkcolor=blue,
	citecolor=teal,
	urlcolor=PineGreen,
	pdftitle={},
	pdfsubject={},
	pdfauthor={},
	pdfkeywords={},
}

\title{A Generalized Fourier Transform and a Smooth Analogue of Dunkl Operators}
\author{Temma Aoyama}
\date{}

\begin{document}
	
	\maketitle
	\begin{abstract}
			We introduce a deformation of the Fourier transform on $\mathbb{R}^N$ arising from a representation-theoretic construction associated with $\widetilde{SL}(2,\mathbb{R}) \times O(N)$ that still admits an underlying degree-one operator structure. More precisely, we construct a generalized Fourier transform $\mathcal{F}_b$, a non-local deformation $H_b$ of the Laplacian $\Delta$, and operators $D_{b,n}$ deforming the partial derivatives $\frac{\partial}{\partial x_n}$. We show that the operators $D_{b,n}$ and $x_n$ are compatible with the $\widetilde{SL}(2,\mathbb{R})$-representation in a way parallel to the classical case: for each $n$, the space spanned by $x_n$ and $D_{b,n}$ carries the standard representation of $\widetilde{SL}(2,\mathbb{R})$; in particular, the generalized Fourier transform $\mathcal{F}_b$ interchanges $D_{b,n}$ and $x_n$, and the $\mathfrak{sl}_2$-triple is recovered from quadratic expressions in these operators. We also establish the inversion formula for $\mathcal{F}_b$ and give explicit formulas for both $\mathcal{F}_b$ and $D_{b,n}$. In particular, $\mathcal{F}_b$ admits an explicit integral kernel representation, and $D_{b,n}$ is expressed as the sum of a differential term and a spherical integral term. Our construction might be viewed as a continuous analogue of Dunkl theory, with $O(N)$ playing the role of a reflection group. 
	\end{abstract}
	
	\tableofcontents
	
	\section{Introduction}
	
	\subsection{Fourier analysis and Representation theory}
	We begin by reviewing some background material. The Fourier transform $\mathcal{F}$ on $L^{2}(\mathbb{R}^{N})$ can be expressed as follows \cite{MR974332}:
	\[
	\mathcal{F} = i^{\frac{N}{2}} \exp\!\left(\frac{\pi i}{4}(\Delta - |x|^{2})\right).
	\]
	This formula admits a natural interpretation in terms of a representation of $\widetilde{SL}(2,\mathbb{R}) \times O(N)$, where $\widetilde{SL}(2,\mathbb{R})$ denotes the universal covering of $SL(2,\mathbb{R})$. 
	
	\vspace{6pt}
	That is, the operators
	$\frac{i}{2}|x|^{2},\ \frac{i}{2}\Delta,\ E+\frac{N}{2}$ form an $O(N)$-invariant $\mathfrak{sl}_{2}$-triple, via the correspondence
	\[
	\frac{i}{2}|x|^{2}\leftrightarrow 
	\begin{pmatrix}
		0 & 1 \\
		0 & 0
	\end{pmatrix},\quad
	\frac{i}{2}\Delta\leftrightarrow 
	\begin{pmatrix}
		0 & 0 \\
		1 & 0
	\end{pmatrix},\quad
	E+\frac{N}{2}\leftrightarrow 
	\begin{pmatrix}
		1 & 0 \\
		0 & -1
	\end{pmatrix}.
	\]
	From this $\mathfrak{sl}_{2}$-triple, one obtains a representation of $\mathfrak{sl}_{2}(\mathbb{R})$, which lifts to a unitary representation of $\widetilde{SL}(2,\mathbb{R})$. Together with the natural action of $O(N)$, this yields a unitary representation
	\[\Omega\,:\,\widetilde{SL}(2,\mathbb{R})\times O(N)\curvearrowright L^{2}(\mathbb{R}^{N}). \]
	The Fourier transform $\mathcal{F}$ is realized as the action of a Weyl group element of $\widetilde{SL}(2,\mathbb{R})$ via $\Omega$:
	\[\mathcal{F}=i^{N/2}\Omega\,(e^{\frac{\pi }{2}\left(\begin{smallmatrix}
			0 & -1 \\
			1 & 0
		\end{smallmatrix}\right)}).\]
	The inversion formula can also be understood from this representation-theoretic viewpoint.
	
	We note that the space of smooth vectors of $\Omega$ coincides with $\mathcal{S}(\mathbb{R}^{N})$. This explains why the Fourier transform preserves the Schwartz space.
	
	\vspace{6pt}
	Moreover, this structure has an underlying degree-one part, generated by the operators
	$x_n$ and $\frac{\partial}{\partial x_n}$.
	The differential operators $\frac{\partial}{\partial x_n}$ commute among themselves:
	\[
	\left[\frac{\partial}{\partial x_m},\frac{\partial}{\partial x_n}\right]=0.
	\]
	The above $\mathfrak{sl}_2$-triple then has the following $O(N)$-invariant quadratic expression: 
	\[
	|x|^2=\sum_{n=1}^N x_n^2,\qquad
	E+\frac N2=\frac12\sum_{n=1}^N\left\{\frac{\partial}{\partial x_n},x_n\right\},\qquad
	\Delta=\sum_{n=1}^N\left(\frac{\partial}{\partial x_n}\right)^2.
	\]
	Furthermore, $\widetilde{SL}(2,\mathbb{R})$ acts on the real vector space
	$V_{n}:=\left\{x_{n},\, i\frac{\partial}{\partial x_{n}}\right\}_{\mathbb{R}}$ by the standard representation via
	\[
	(g,v)\mapsto \Omega(g)\circ v\circ \Omega(g)^{-1}
	\qquad
	(g\in\widetilde{SL}(2,\mathbb{R}),\ v\in V_n).
	\]
	This leads, in particular, to the relations
	\[
	\mathcal{F}\circ\frac{\partial}{\partial x_{n}}=ix_{n}\circ\mathcal{F},
	\qquad
	\mathcal{F}\circ x_{n}=i\frac{\partial}{\partial x_{n}}\circ \mathcal{F}.
	\]
	These can be interpreted as a manifestation of the Weyl group action exchanging weights.
	
	\vspace{6pt}
	$\Omega$ decomposes into irreducible components as
	\begin{equation}\label{blanchIntro}
		L^{2}\left(\mathbb{R}^{N}\right) \cong \sideset{}{^\oplus}{\sum}_{m=0}^{\infty}\pi_{\frac{N+2m -2}{2}}\boxtimes \mathcal{H}^{m}(\mathbb{R}^{N})
	\end{equation}
	where $\pi_{\lambda}$ is the lowest weight representation of lowest weight $\lambda+1$ with respect to the action of $-\frac{\Delta-|x|^{2}}{2}$, the infinitesimal generator of the $\widetilde{SO}(2)$-action under $\Omega$, and $\mathcal{H}^{m}(\mathbb{R}^{N})$ is the space of spherical harmonics of degree $m$. This structure provides a natural explanation for the above results.
	
	\vspace{12pt}
	For the realization of the representation $\Omega$ and the resulting representation-theoretic interpretation of the Fourier transform, we refer to Ben Sa\"id--Kobayashi--{\O}rsted \cite{MR2956043} in the special case \((k,a)=(0,2)\).
	
	\subsection{Main results}
	
	We now deform this structure. More precisely, we consider a deformation of the classical Fourier-analytic structure on $\mathbb{R}^N$ within the representation-theoretic framework of $\widetilde{SL}(2,\mathbb{R})\times O(N)$ that still admits an underlying degree-one structure parallel to the classical one.
	
	\vspace{6pt}
	
	We construct a generalized Fourier transform $\mathcal{F}_b$, a deformation $H_b$ of the Laplacian $\Delta$, and operators $D_{b,n}$ deforming the partial derivatives $\frac{\partial}{\partial x_{n}}$. 
	The operators $x_n$ and $D_{b,n}$ satisfy basic properties parallel to those in the classical case: the space
	$V_{b,n}:=\{x_n,\, iD_{b,n}\}_{\mathbb R}$
	carries the standard representation of $\widetilde{SL}(2,\mathbb{R})$ as in \eqref{stdintro}; in particular, the generalized Fourier transform $\mathcal{F}_b$ exchanges $x_n$ and $D_{b,n}$ as in \eqref{IWintro}, and the $\mathfrak{sl}_2$-triple is recovered from quadratic expressions in these operators as in \eqref{quadintro}.
	
	\vspace{6pt}
	Our construction proceeds as follows. We first define a non-local deformation $H_b$ of the Laplacian, and construct an $O(N)$-invariant $\mathfrak{sl}_{2}$-triple from $H_b$, thereby obtaining a $(\mathfrak g,\widetilde K)$-module $\omega_{b}$ for $(\mathfrak g,\widetilde K) =(\mathfrak{sl}_{2}(\mathbb{R}),\widetilde{SO}(2))$.
	By integrating $\omega_{b}$, we obtain a unitary representation $\Omega_b$ of  $\widetilde{SL}(2,\mathbb{R})\times O(N)$ on $L^2(\mathbb{R}^N,|x|^{2b}dx)$. 
	We then define the generalized Fourier transform $\mathcal{F}_b$ and the operators $D_{b,n}$ in terms of $\Omega_b$, and derive their basic properties representation-theoretically. We also compute their explicit formulas.
	
	\vspace{18pt}
	Let $b> -N/2$. We first construct a representation $\Omega_{b}$ of $\widetilde{SL}(2,\mathbb{R})\times O(N)$ on $L^{2}\!\left(\mathbb{R}^{N},|x|^{2b}dx\right)$. It decomposes as
			\[L^{2}\left(\mathbb{R}^{N},|x|^{2b}dx\right) \cong \sideset{}{^\oplus}{\sum}_{m=0}^{\infty}\pi_{b+\frac{N+2m -2}{2}}\boxtimes \mathcal{H}^{m}(\mathbb{R}^{N})\]
	where 
	\[
	\begin{aligned}
		\pi_{\lambda} \ &:\ \text{the lowest weight representation of }\widetilde{SL}(2,\mathbb{R})
		\text{ with lowest weight }\lambda+1,\\
		\mathcal{H}^{m}(\mathbb{R}^{N}) \ &:\ \text{the space of spherical harmonics of degree }m,
		\text{ an irreducible representation of }O(N).
	\end{aligned}
	\]
	That is, we consider the representation obtained by shifting all lowest weights from the classical case \eqref{blanchIntro} simultaneously by $b$.
	\vspace{12pt}
	
	This representation is constructed by deforming the Laplacian $\Delta$ to the non-local operator $H_{b}$, together with the $\mathfrak{sl}_{2}$-triple
	\[
	\frac{i}{2}|x|^{2}\leftrightarrow 
	\begin{pmatrix}
		0 & 1 \\
		0 & 0
	\end{pmatrix},\quad
	\frac{i}{2}H_{b}\leftrightarrow 
	\begin{pmatrix}
		0 & 0 \\
		1 & 0
	\end{pmatrix},\quad
	E+\frac{N+2b}{2}\leftrightarrow 
	\begin{pmatrix}
		1 & 0 \\
		0 & -1
	\end{pmatrix}.
	\]
	This $\mathfrak{sl}_{2}$-triple defines a $(\mathfrak g,\widetilde K)$-module $\omega_b$ for $(\mathfrak g,\widetilde K) =(\mathfrak{sl}_{2}(\mathbb{R}),\widetilde{SO}(2))$, which we then lift to a unitary representation $\Omega_b$.
	(See Definition~\ref{Hb} and Theorem~\ref{lift}.)
	
	 \vspace{6pt}
	 $H_{b}$ is a non-local operator on $L^{2}\left(\mathbb{R}^{N},|x|^{2b}dx\right)$ determined automatically by the above decomposition, together with the requirement that it commutes with the natural action of $O(N)$ and is compatible with the operators $|x|^{2}$ and $E$ (see Proposition~\ref{characterization}).
	 
	 We also note that the space of smooth vectors of $\Omega_{b}$ still coincides with $\mathcal{S}(\mathbb{R}^{N})$. 
	 
	 \vspace{24pt}
	 We define the generalized Fourier transform $\mathcal{F}_{b}$ via $\Omega_{b}$ as
	 \[\mathcal{F}_{b}=i^{b+N/2}\Omega_{b}\,(e^{\frac{\pi}{2}\left(\begin{smallmatrix}
	 		0 & -1 \\
	 		1 & 0
	 	\end{smallmatrix}\right)})= i^{b+\frac{N}{2}} \exp\!\left(\frac{\pi i}{4}(H_{b} - |x|^{2})\right).\]
	 It satisfies 
	 \[ \mathcal{F}_{b}^{2}f(x)=f(-x),\qquad \mathcal{F}_{b}\overline{\mathcal{F}}_{b}=\overline{\mathcal{F}}_{b}\mathcal{F}_{b}=1\]
	 (See Theorem~\ref{Fou}).
	 
	 \vspace{24pt}
	 To exhibit the underlying degree-one structure of $\Omega_{b}$, we also construct operators $D_{b,n}$ ($n=1,\dots,N$), which may be regarded as deformations of the partial derivatives. 
	 In parallel with the classical case, $\widetilde{SL}(2,\mathbb{R})$ acts on
	 $V_{b,n}:=\{x_n,\, iD_{b,n}\}_{\mathbb R}$
	 as the standard representation via
	 \begin{equation}\label{stdintro}
	 	(g,v) \mapsto \Omega_{b}(g)\circ v\circ\Omega_{b}(g)^{-1}\hspace{24pt} (g\in\widetilde{SL}(2,\mathbb{R}), v\in V_{b,n} )
	 \end{equation}
	 (See Corollary~\ref{lift2}). 
	 
	 Related to this, we derive the intertwining relations (See Theorem~\ref{fou1}):
	 \begin{equation}\label{IWintro}
	 \mathcal{F}_{b}\circ D_{b,n}=ix_n\circ\mathcal{F}_{b},
	 \qquad
	 \mathcal{F}_{b}\circ x_n=iD_{b,n}\circ\mathcal{F}_{b},
	 \end{equation}
	 the commutativity relations (See Corollary~\ref{relation}):
	 \[
	 [D_{b,m},D_{b,n}]=0,
	 \]
	 and the quadratic relations (See Corollary~\ref{relation}):
	 \begin{equation}\label{quadintro}
	 |x|^2=\sum_{n=1}^N x_n^2,\qquad
	 E+\frac{N+2b}{2}=\frac12\sum_{n=1}^N\{D_{b,n},x_n\},\qquad
	 H_b=\sum_{n=1}^N D_{b,n}^2.
	 \end{equation}
	 
	 In this sense, the operators $x_n$ and $D_{b,n}$ form the degree-one structure associated with the $\mathfrak{sl}_2$-triple governing $\mathcal{F}_b$. 
	
	\vspace{24pt}
	We also derive explicit formulas for $\mathcal{F}_{b}$ and $D_{b,n}$.
	
	The generalized Fourier transform $\mathcal{F}_b$ admits an explicit integral kernel representation.
	More precisely, for $f\in \mathcal{S}(\mathbb{R}^{N})$,
	\[\mathcal{F}_{b}f(x) = c_{b,N}\int_{\mathbb{R}^{N}} B_{b}(x,y)\,f(y) \,|y|^{2b}dy .\] 
	
	via the kernel
	\[
	B_{b}(x,y)
	=
	\frac{1}{B(b,N/2)}
	\int_0^1
	u^{b-1}(1-u)^{\frac{N}{2}-1}
	\mathcal{J}_b\bigl(u|x||y|\bigr)\,e^{-i(1-u)\langle x,y\rangle}\,du.
	\]
	
	where 
	$c_{b,N}=\frac{1}{(2\pi)^{b+N/2}} \frac{\Gamma(N/2)\pi^{b}}{\Gamma(b+N/2)}$ and $\mathcal{J}_{\nu}(w)
	=
	\sum_{m=0}^{\infty}
	\frac{(-1)^{m}(w/2)^{2m}}{(\nu+1)_m m!}$ is a normalized Bessel function
	(See Theorem~\ref{ker} and Proposition~\ref{summation}).
	
	\vspace{6pt}
	The operator $D_{b,n}$ admits the following explicit expression in terms of the partial derivative and integral over spheres. More precisely, for $f\in \mathcal{S}(\mathbb{R}^{N})$,
	\begin{align*}
		\MoveEqLeft D_{b,n}f(x) =\frac{\partial f}{\partial x_{n}}(x) + \frac{2b}{\mathrm{vol}(S^{N-1})}\int_{|x|=|y|} \frac{x_{n}-y_{n}}{\left| x- y \right|^{N}} (f(x)-f(y))\,dy & \\
		& =\frac{\partial f}{\partial x_{n}}(x) + \frac{b}{\mathrm{vol}(S^{N-1})} \int_{S^{N-1}} \xi_{n}\frac{f(x)-f(\sigma_{\xi}(x))}{\langle \xi,x\rangle}\,d\xi, 
	\end{align*}
	
	where $d\xi$ is the $O(N)$-invariant measure on $S^{N-1}$,
	and $dy$ is the corresponding $O(N)$-invariant measure on the sphere
	$\{y\in\mathbb{R}^N : |x|=|y|\}$.
	(See Theorem~\ref{explicit}). This might be viewed as an analogue of Dunkl operators  \cite{MR951883}, corresponding at least formally to the case where the reflection group is $O(N)$ and the root system is $\Phi=S^{N-1}$.
	
	\vspace{12pt}
	The following table summarizes the deformation considered in this paper.
	\begin{center}
		\begin{tabular}{c|c|l}
			Classical ($b=0$) & Deformed ($b > -N/2$) & Comment \\
			\hline
			$\partial_{x_n}$ & $D_{b,n}$ & deformation of partial derivative \\
			$\Delta$ & $H_b$ & non-local deformation of Laplacian \\
			$\mathcal{F}$ & $\mathcal{F}_b$ & generalized Fourier transform \\
			$\Omega$ & $\Omega_{b}$ & deformation of representation \\
			$L^{2}(\mathbb{R}^{N})$ & $L^{2}(\mathbb{R}^{N}, |x|^{2b}dx)$ & weighted $L^{2}$-space \\
			$\mathcal{S}(\mathbb{R}^{N})$ & $\mathcal{S}(\mathbb{R}^{N})$ & Schwartz space (Smooth vectors)
		\end{tabular}
	\end{center}
	
	\vspace{12pt}
	Our approach in Section~\ref{2} follows the method of \cite{MR2401813, MR2566988,MR2956043} developed in the analysis of minimal representations by Kobayashi--Mano \cite{MR2401813}, and subsequently extended by Ben Sa\"id--Kobayashi--{\O}rsted to the setting of \((k,a)\)-generalized Laguerre semigroups and \((k,a)\)-generalized Fourier analysis \cite{MR2566988,MR2956043}. Their work is based on representations of $\widetilde{SL}(2,\mathbb{R})$. The present study began with the aim of uncovering an additional degree-one structure associated with the $\widetilde{SL}(2,\mathbb{R})$-framework.

	\section{A generalized Fourier transform via representation theory}\label{2}
	In this section, we consider a one-parameter deformation of the Fourier transform.
	Our approach follows the method of \mbox{Ben Sa\"{i}d}--Kobayashi--{\O}rsted developed in  $(k,a)$-generalized Laguerre semigroup theory and $(k,a)$-generalized Fourier analysis \cite{MR2566988,MR2956043}.
	
	\vspace{10pt}
	
	\subsection{An orthogonal basis}\label{2.1}
	In this subsection, we construct a complete orthogonal basis of $L^{2}\left(\mathbb{R}^{N},|x|^{2b}dx\right)$.  This gives a basis of the space of $K$-finite vectors for the representation $\Omega_{b}$ of $\widetilde{SL}(2,\mathbb{R}) \times O(N)$ constructed in Subsection~\ref{2.2}.
	\vspace{6pt}
	
	Let $L_{\ell}^{(\nu)}(t) := \sum_{k=0}^{\ell}\frac{(-1)^{k}\Gamma(\nu+\ell +1)}{\Gamma(\nu + k +1) (\ell-k)!}\frac{t^{k}}{k!}$ \hspace{5pt} ($\ell\in\mathbb{Z}_{\ge 0}$, $\nu > -1$) be Laguerre polynomials.\par
	Let $\mathcal{H}^{m}(\mathbb{R}^{N})$ be the space of harmonic polynomials of degree $m$. Based on the facts that $\bigoplus_{m=0}^{\infty}\mathcal{H}^{m}(\mathbb{R}^{N})\hookrightarrow L^{2}(S^{N-1}) : p\mapsto p|_{S^{N-1}}$ is injective with dense image, and that when $m\neq m'$, $\mathcal{H}^{m}(\mathbb{R}^{N}) \perp_{L^{2}(S^{N-1})} \mathcal{H}^{m'}(\mathbb{R}^{N})$, we choose harmonic polynomials $p_{j}(x)$ \hspace{5pt} (if $N\ge 2$,  $j\in\mathbb{Z}_{\ge 0}$, and if $N=1$, $j=0,1$) forming a C.O.N.S. (complete orthonormal system) of $L^{2}(S^{N-1})$ and satisfying $\deg(p_{j})\le \deg(p_{j+1})$. We set $m_{j}:=\deg(p_{j})$.
	\vspace{18pt}
	
	Suppose $ b > -\frac{N}{2}$. For $\ell,j\in\mathbb{Z}_{\ge 0}$, we define the function $\Phi_{b,\ell,j}(x)$ as follows:
	\[\Phi_{b,\ell,j}(x) := e^{-\frac{1}{2}|x|^2}\,L^{(b+\lambda_{N,m_{j}})}_{\ell}\bigl(|x|^2\bigr) \,p_{j}(x),\]
	where $\lambda_{N,m}:=\frac{N-2}{2}+m$. We will also use the following notation, depending on the context:\\
	For $p(x) \in \mathcal{H}^{m}(\mathbb{R}^{N})$,
	\[\Phi_{b,\ell,p}(x):= e^{-\frac{1}{2}|x|^2}\,L^{(b+\lambda_{N,m})}_{\ell}\bigl(|x|^2\bigr) \,p(x).\]
	
	\vspace{3pt}
	$\Phi_{b,\ell,p}(x)$ and $\Phi_{b,\ell,j}(x)$ will later be shown to diagonalize the generalized harmonic oscillator introduced below; see Proposition~\ref{har}. Moreover, the space $W_{b,alg}$ spanned by them will serve as the space of $K$-finite vectors for a representation $\Omega_{b}$ constructed in Theorem~\ref{lift}; see Proposition~\ref{act1}.
	
	\vspace{6pt}
	\begin{dfn}[The space $W_{b,alg}$]\label{Walg}
		We define
		\[
		W_{b,alg}:=\bigoplus_{\ell,j}\mathbb C \Phi_{b,\ell,j}(x)
		\subset L^2(\mathbb R^N,|x|^{2b}dx).
		\]
	\end{dfn}
	
	\vspace{18pt}
	The following proposition shows that the family $\{\Phi_{b,\ell,j}(x)\}_{\ell,j}$ forms the C.O.S. (complete orthogonal system) of $L^{2}(\mathbb{R}^{N},|x|^{2b}dx)$.
	
	\begin{prp}[Orthogonal basis of $L^{2}(\mathbb{R}^{N},|x|^{2b}dx)$]\label{orth}
		
		\begin{enumerate}
			\item 
			\begin{align*} 
				\int_{\mathbb{R}^{N}}\Phi_{b, \ell,j}(x)\,	\overline{\Phi_{b, \ell',j'}(x)}\, |x|^{2b}dx= \delta_{\ell,\ell'} \delta_{j,j'}\frac{\Gamma(\ell+b+\lambda_{N,m_{j}}+1)}{2\Gamma(\ell+1)}.
			\end{align*}
			\item Suppose $ b > -\frac{N}{2}$. $W_{b,alg}$ is a dense subspace of $ L^{2}(\mathbb{R}^{N}, |x|^{2b}dx)$.
		\end{enumerate}
	\end{prp}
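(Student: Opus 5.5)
Both parts rest on polar coordinates $x=r\xi$ with $r>0$, $\xi\in S^{N-1}$, so that $dx=r^{N-1}\,dr\,d\xi$. Since $p_j$ is homogeneous of degree $m_j$, we have $p_j(r\xi)=r^{m_j}p_j(\xi)$. For part (1), the integral then factors as
\[
\int_0^\infty e^{-r^2}L^{(b+\lambda_{N,m_j})}_\ell(r^2)\,L^{(b+\lambda_{N,m_{j'}})}_{\ell'}(r^2)\,r^{m_j+m_{j'}+2b+N-1}\,dr\;\cdot\;\int_{S^{N-1}}p_j(\xi)\overline{p_{j'}(\xi)}\,d\xi .
\]
The spherical factor is $\delta_{j,j'}$ by the choice of $\{p_j\}$ as a C.O.N.S. of $L^2(S^{N-1})$, so it suffices to treat $j=j'$. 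Put $\nu=b+\lambda_{N,m_j}$; then $2m_j+2b+N-1=2\nu+1$. Substituting $t=r^2$, so $dr=\tfrac{1}{2}t^{-1/2}\,dt$, the radial factor becomes
\[
\tfrac12\int_0^\infty e^{-t}L^{(\nu)}_\ell(t)L^{(\nu)}_{\ell'}(t)\,t^{\nu}\,dt .
\]
The hypothesis $b>-N/2$ gives $\nu\ge b+\frac{N-2}{2}>-1$, so the classical Laguerre orthogonality relation applies and yields $\delta_{\ell\ell'}\Gamma(\ell+\nu+1)/(2\,\ell!)$, which is the claimed formula.

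For part (2), I would show that the orthogonal complement of $W_{b,alg}$ is trivial. Take $f\in L^2(\mathbb{R}^N,|x|^{2b}dx)$ orthogonal to every $\Phi_{b,\ell,j}$. For fixed $j$, define the radial coefficient
\[
f_j(r)=\int_{S^{N-1}}f(r\xi)\overline{p_j(\xi)}\,d\xi .
\]
By Fubini and Cauchy--Schwarz, $f_j$ lies in $L^2((0,\infty),r^{2b+N-1}dr)$, and $\sum_j\|f_j\|^2=\|f\|^2$ by Parseval on the spheres. Orthogonality then reads $\int_0^\infty f_j(r)e^{-r^2/2}L^{(\nu)}_\ell(r^2)\,r^{m_j}\,r^{2b+N-1}\,dr=0$ for all $\ell$. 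Set $g(t)=f_j(\sqrt t)\,t^{-m_j/2}$. After the substitution $t=r^2$, these conditions become
\[
\int_0^\infty g(t)\,L^{(\nu)}_\ell(t)\,e^{-t/2}\,t^{\nu}\,dt=0\quad\text{for all }\ell ,
\]
with $g(t)e^{-t/2}$ in $L^2((0,\infty),t^\nu dt)$ weighted appropriately. More precisely, $G(t):=g(t)e^{t/2}\cdot e^{-t}$ should be placed in $L^2(t^\nu e^{-t}dt)$. We have $\|f_j\|^2=\tfrac12\int |f_j(\sqrt t)|^2 t^{\nu-m_j}\,dt$, so $h(t):=g(t)e^{t/2}$ satisfies $\int|h|^2e^{-t}t^\nu\,dt<\infty$. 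The conditions are then $\langle h,L^{(\nu)}_\ell\rangle_{L^2(t^\nu e^{-t}dt)}=0$ for all $\ell$.

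The key step is completeness of the Laguerre polynomials in $L^2((0,\infty),t^\nu e^{-t}dt)$ for $\nu>-1$. I would take this as standard, or prove it as follows. If $h$ is orthogonal to all polynomials, then $F(z)=\int_0^\infty h(t)e^{-t}t^\nu e^{zt}\,dt$ is holomorphic for $\mathrm{Re}\,z<1/2$ by Cauchy--Schwarz. All its derivatives at $0$ vanish, so $F\equiv0$, and in particular $F(is)=0$ for real $s$. This says the Fourier transform of the $L^1$ function $h e^{-t}t^\nu\mathbf 1_{t>0}$ vanishes, hence $h=0$. It follows that $f_j=0$ for all $j$, and so $f=0$ by completeness of $\{p_j\}$ in $L^2(S^{N-1})$, which was built into their choice.

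I expect the main obstacle to be the measure-theoretic bookkeeping in part (2), in particular tracking the weights so that $h$ really lies in the Laguerre $L^2$ space. For $N=1$ one must also read $S^0=\{\pm1\}$ with counting measure and take $p_0,p_1$ proportional to $1$ and $x$; the argument is otherwise identical.
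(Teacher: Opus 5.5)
Your proof is correct and follows the paper's approach. Part (1) is the same polar-coordinate factorization followed by Laguerre orthogonality, and part (2) is the same reduction to completeness of the Laguerre system in each spherical-harmonic sector together with completeness of $\{p_j\}$ in $L^2(S^{N-1})$; you phrase (2) via the orthogonal complement with explicit fiberwise Parseval bookkeeping and prove Laguerre completeness yourself (Laplace transform plus Fourier uniqueness), where the paper argues density directly and cites \cite[Theorem 5.7.1]{MR372517}.
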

	\begin{proof}
		
		\begin{enumerate}
			\item 
			\begin{align*} 
				\MoveEqLeft\int_{\mathbb{R}^{N}}\Phi_{b,\ell,j}(x)\, \overline{\Phi_{b,\ell',j'}(x)}\, |x|^{2b}dx &\\
				&= \int_{0}^{\infty} L^{(b+\lambda_{N,m_{j}})}_{\ell}\bigl(r^2\bigr) L^{(b+\lambda_{N,m_{j'}})}_{\ell'}\bigl(r^2\bigr)\,r^{2b+m_{j}+m_{j'}+N-1}e^{-r^2}dr \int_{S^{N-1}} p_{j}(\omega)\,\overline{p_{j'}(\omega)}\,d\omega &\\
				&= \frac{1}{2} \int_{0}^{\infty} L^{(b+\lambda_{N,m_{j}})}_{\ell}(t) L^{(b+\lambda_{N,m_{j}})}_{\ell'}(t)\,t^{b+\lambda_{N,m_{j}}}e^{-t}dt \times \delta_{j,j'} \\
				&= \delta_{\ell,\ell'} \delta_{j,j'}\frac{\Gamma(\ell+b+\lambda_{N,m_{j}}+1)}{2\Gamma(\ell+1)}.
			\end{align*}
			
			In the last equality, we used $\int_{0}^{\infty} L^{(\nu)}_{\ell}(t) L^{(\nu)}_{\ell'}(t)\,t^{\nu}e^{-t}dt =  \delta_{\ell,\ell'} \frac{\Gamma(\ell+\nu+1)}{\ell!}$ (the orthogonality of Laguerre polynomials).
			\item When $\alpha >-1$, $\operatorname{span}_{\mathbb C}\{ e^{-x/2}x^{\alpha/2}x^{n} \mid n=0,1, \dots\}$ is dense in $L^2(0,\infty)$ \cite[Theorem 5.7.1]{MR372517}. 
			Thus, when $b>-\frac{N}{2}$, the space $\operatorname{span}_{\mathbb C}\{ \Phi_{b,\ell,p}(x) \mid \ell \in \mathbb{Z}_{\ge 0},\,p(x)\in\mathcal{H}^{m}(\mathbb{R}^{N}) \} $ is dense in  $L^{2}(\mathbb{R}_{>0}, r^{N+2b-1}dr)\otimes \mathcal{H}^{m}(\mathbb{R}^{N})|_{S^{N-1}}$ for all $m\ge0$, where $r=|x|$. 
			Since $\bigoplus_{m=0}^{\infty}\mathcal{H}^{m}(\mathbb{R}^{N})|_{S^{N-1}}$ is dense in $L^{2}(S^{N-1})$, $\bigoplus_{m=0}^{\infty} L^{2}(\mathbb{R}_{>0}, r^{N+2b-1}dr)\otimes \mathcal{H}^{m}(\mathbb{R}^{N})|_{S^{N-1}}$ is dense in  $L^{2}(\mathbb{R}^{N}, |x|^{2b}dx)$. Hence, $W_{b,alg}$ is dense in $L^{2}(\mathbb{R}^{N}, |x|^{2b}dx)$.
		\end{enumerate}
		
	\end{proof}

	\vspace{12pt}
	
	\subsection{A unitary representation of $\widetilde{SL}(2,\mathbb{R}) \times O(N)$}\label{2.2}
	In this subsection, we construct a unitary representation of $\widetilde{SL}(2,\mathbb{R}) \times O(N)$. 
	
	We first introduce the operator $H_b$ in Definition~\ref{Hb} and show in Proposition~\ref{sl} that it gives rise to an $O(N)$-invariant $\mathfrak{sl}_2$-triple on $L^2(\mathbb{R}^N,|x|^{2b}dx)$. We then use this triple to construct a $(\mathfrak g,\widetilde K)$-module $\omega_b$  in Definition~\ref{gK} on the space $W_{b,alg}$ introduced in Definition~\ref{Walg}, and finally lift it to a unitary representation $\Omega_b$ of $\widetilde{SL}(2,\mathbb{R})\times O(N)$ on $L^2(\mathbb{R}^N, |x|^{2b}dx)$ in Theorem~\ref{lift}.

	\vspace{12pt}
	
	We begin by introducing the operator $H_b$, which plays the role of a deformed Laplacian in our construction.
	\begin{dfn}[The operator $H_{b}$]\label{Hb}
	We define an operator $H_{b}$ on $L^2(\mathbb{R}^N, |x|^{2b}dx)$ with domain $W_{b,alg}$ by
	\[
	H_b := \Delta + \frac{2b}{|x|^2}\mathcal{R},
	\]
	
	where $\mathcal R$ is defined on functions of the form $f(|x|^2)p(x)$ (which include the basis elements of $W_{b,alg}$) by
	\[
	\mathcal R\bigl(f(|x|^2)p(x)\bigr)
	=
	E\bigl(f(|x|^2)\bigr)p(x),
	\]
	with $E=\sum_{j=1}^N x_j \partial_{x_j}$ the Euler operator. Here $W_{b,alg}$ is the space defined in Definition~\ref{Walg}.
\end{dfn}
	
	\begin{rmk}
		The operator $H_b$ is uniquely determined by certain representation-theoretic conditions, namely, compatibility with the $O(N)$-action, with the operators $E$ and $|x|^2$, and with an irreducible decomposition of a specified form; these motivate its introduction. See Proposition~\ref{characterization}.
	\end{rmk}
	
	\begin{rmk}[An alternative expression for $\mathcal R$]
		\[\mathcal{R}=E+\frac{N-2}{2}-\left(\left(\frac{N-2}{2}\right)^2-\Delta_{S^{N-1}}\right)^{1/2}\]
		in an appropriate sense, since
		$\Delta_{S^{N-1}}p(\omega)=-m(m+N-2)p(\omega)$ for $p\in\mathcal H^m(\mathbb R^N)$.
		In particular, $\mathcal R$ is non-local. We note that the operator $\left(\left(\frac{N-2}{2}\right)^2-\Delta_{S^{N-1}}\right)^{1/2}$ also appears in \cite{MR2020550}.
	\end{rmk}
	
	\vspace{6pt}
	The next proposition shows that $H_b-|x|^2$ is diagonalized by the basis introduced in Subsection~\ref{2.1}. This will be the starting point for the representation-theoretic construction.
	\begin{prp}[The generalized harmonic oscillator]\label{har}
		For $p(x) \in \mathcal{H}^{m}(\mathbb{R}^{N})$,
		$$ \,\frac{1}{2}\left(H_{b}-|x|^2\right)\, \Phi_{b,\ell,p}(x) = -(b+\lambda_{N,m}+2\ell +1)\,\Phi_{b,\ell,p}(x).\,$$
	\end{prp}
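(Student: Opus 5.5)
Write $\Phi_{b,\ell,p}(x)=g(|x|^2)\,p(x)$ with $g(t)=e^{-t/2}L^{(\nu)}_\ell(t)$ and $\nu:=b+\lambda_{N,m}$. The plan is to reduce $\frac12(H_b-|x|^2)$ acting on functions of the form $g(|x|^2)p(x)$ to an ordinary differential operator in $t=|x|^2$. We then recognize the Laguerre differential equation.

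\textbf{Step 1: the Laplacian term.} For $p\in\mathcal H^m(\mathbb R^N)$ and smooth $g$, the product rule gives
\[
\Delta\bigl(g(|x|^2)p(x)\bigr)=\bigl(\Delta g(|x|^2)\bigr)p+2\langle\nabla g(|x|^2),\nabla p\rangle+g\,\Delta p .
\]
Here $\Delta p=0$, $\nabla g(|x|^2)=2g'(t)x$, and $\langle x,\nabla p\rangle=Ep=mp$. A direct calculation gives $\Delta g(|x|^2)=4tg''+2Ng'$. Hence
\[
\Delta(gp)=\bigl(4tg''+(2N+4m)g'\bigr)p .
\]

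\textbf{Step 2: the term $\frac{2b}{|x|^2}\mathcal R$.} By Definition~\ref{Hb}, $\mathcal R(gp)=E(g(|x|^2))\,p=2tg'(t)\,p$. Therefore $\frac{2b}{|x|^2}\mathcal R(gp)=4bg'\,p$. Combining with Step 1,
\[
\tfrac12(H_b-|x|^2)(gp)=\bigl(2tg''+2(\nu+1)g'-\tfrac t2 g\bigr)p ,
\]
using $N+2m+2b=2(\nu+1)$. This is exactly the classical radial computation with $\lambda_{N,m}$ replaced by $\nu$. That is the whole point of the $\mathcal R$ correction: it converts $N$ into $N+2b$ in the radial part while leaving the angular part untouched.

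\textbf{Step 3: the Laguerre equation.} Substitute $g=e^{-t/2}L(t)$ with $L=L^{(\nu)}_\ell$. Then $g'=e^{-t/2}(L'-\tfrac12L)$ and $g''=e^{-t/2}(L''-L'+\tfrac14L)$. The operator from Step 2 becomes
\[
e^{-t/2}\bigl(2tL''+2(\nu+1-t)L'-(\nu+1)L\bigr).
\]
The Laguerre equation $tL''+(\nu+1-t)L'+\ell L=0$ turns this into $e^{-t/2}(-2\ell-\nu-1)L$. This yields the eigenvalue $-(b+\lambda_{N,m}+2\ell+1)$.

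There is no real obstacle. The only points needing care are the bookkeeping in Step 1 and checking that $\mathcal R$ is well defined on $gp$. The latter holds because the decomposition $f(|x|^2)p(x)$ with $p$ homogeneous harmonic of fixed degree is unique, so $\mathcal R$ just acts by $2t\,d/dt$ on the radial factor.
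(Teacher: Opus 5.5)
Your proof is correct and takes essentially the same route as the paper, whose proof says only that the claim follows from a radial computation together with the Laguerre differential equation. The paper phrases the reduction in polar coordinates (using $\Delta_{S^{N-1}}p=-m(m+N-2)p$), while you carry it out with the product rule in $t=|x|^2$, using $\Delta p=0$ and $Ep=mp$; both reduce $\frac12(H_b-|x|^2)$ on $g(|x|^2)p(x)$ to the operator $2tg''+2(\nu+1)g'-\tfrac{t}{2}g$, and your computation checks out.
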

	\begin{proof}
		The claim follows from a computation in polar coordinates together with 
		the Laguerre differential equations $t\frac{d^2L^{(\alpha)}_{\ell}(t)}{dt^2}+ (\alpha +1 -t)\frac{dL^{(\alpha)}_{\ell}(t)}{dt} + \ell L^{(\alpha)}_{\ell}(t)= 0$.
	\end{proof}

	\begin{cor}[Essential self-adjointness]\label{adjoint}
		$\frac{1}{2}\left(H_{b}-|x|^2\right)$ is essentially self-adjoint on $L^{2}(\mathbb{R}^{N},|x|^{2b}dx)$. In particular, it generates a one-parameter unitary group
		$e^{\frac{it}{2}\left(H_{b}-|x|^2\right)}$.
	\end{cor}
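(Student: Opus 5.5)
The plan is to use the standard criterion that a symmetric operator with a complete orthogonal system of eigenvectors in its domain is essentially self-adjoint. Set $A:=\frac{1}{2}(H_b-|x|^2)$ with domain $W_{b,alg}$.

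\textbf{Step 1: the eigenbasis.} By Proposition~\ref{har}, each basis vector $\Phi_{b,\ell,j}$ is an eigenvector of $A$ with real eigenvalue
\[
\mu_{\ell,j}=-(b+\lambda_{N,m_j}+2\ell+1).
\]
Proposition~\ref{orth} shows that these vectors are mutually orthogonal. It also shows that they span the dense subspace $W_{b,alg}$. Hence $A$ maps $W_{b,alg}$ into itself and acts diagonally there. Symmetry then follows by sesquilinearity: for finite sums $f=\sum a_{\ell j}\Phi_{b,\ell,j}$ and $g=\sum c_{\ell j}\Phi_{b,\ell,j}$,
\[
\langle Af,g\rangle=\sum \mu_{\ell,j}\,a_{\ell j}\,\overline{c_{\ell j}}\,\|\Phi_{b,\ell,j}\|^2=\langle f,Ag\rangle,
\]
because every $\mu_{\ell,j}$ is real.

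\textbf{Step 2: essential self-adjointness.} Let $\Phi$ range over the basis $\{\Phi_{b,\ell,j}\}$. We show that $\operatorname{Ran}(A\pm i)$ is dense.
\begin{itemize}
\item Each $\Phi$ satisfies $(A\pm i)\Phi=(\mu\pm i)\Phi$.
\item Since $\mu\pm i\neq 0$, we get $\Phi=(A\pm i)\bigl((\mu\pm i)^{-1}\Phi\bigr)$.
\item So $\operatorname{Ran}(A\pm i)\supset W_{b,alg}$, which is dense by Proposition~\ref{orth}(2).
\end{itemize}
By the basic criterion (von Neumann), $A$ is essentially self-adjoint.

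Equivalently, one can write the closure explicitly. Normalize the basis to $\tilde\Phi=\Phi/\|\Phi\|$. Then
\[
\bar A f=\sum_{\ell,j}\mu_{\ell,j}\langle f,\tilde\Phi_{\ell,j}\rangle\tilde\Phi_{\ell,j},
\]
on the natural domain where $\sum|\mu_{\ell,j}|^2|\langle f,\tilde\Phi_{\ell,j}\rangle|^2<\infty$. This is a multiplication operator in an orthonormal basis, hence self-adjoint.

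\textbf{Step 3: the unitary group.} By the spectral theorem, or directly in the diagonal form, the group is
\[
e^{\frac{it}{2}(H_b-|x|^2)}=e^{it\bar A}, \qquad e^{it\bar A}\tilde\Phi_{\ell,j}=e^{it\mu_{\ell,j}}\tilde\Phi_{\ell,j}.
\]
It is a strongly continuous one-parameter unitary group.

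\textbf{Main obstacle.} The only delicate point is the completeness of the eigenbasis, i.e.\ the density of $W_{b,alg}$ under the hypothesis $b>-N/2$. This is exactly Proposition~\ref{orth}(2), so no real obstacle remains. One should also note that $A$ is well defined on $W_{b,alg}$, because $\mathcal R$ is defined on functions of the form $f(|x|^2)p(x)$.
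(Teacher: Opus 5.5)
Your proposal is correct and follows the paper's argument: the paper's proof observes that, by Proposition~\ref{orth} together with Proposition~\ref{har}, the eigenfunctions of $\frac{1}{2}(H_b-|x|^2)$ form a complete orthogonal system. You have simply written out the standard details the paper leaves implicit: symmetry from the real eigenvalues, density of $\operatorname{Ran}(A\pm i)$ (equivalently, the diagonal form of the closure), and the spectral definition of the unitary group.
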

	
	\begin{proof}
		By Proposition~\ref{orth}, eigenfunctions of $\frac{1}{2}\left(H_{b}-|x|^2\right)$ form C.O.N.S. of $L^{2}(\mathbb{R}^{N},|x|^{2b}dx)$. This proves the claim.
	\end{proof}
	
	\begin{rmk}[$\widetilde{K}$-action]
		The one-parameter group $e^{\frac{it}{2}\left(H_{b}-|x|^2\right)}$ will define the $\widetilde{K}=\widetilde{SO}(2)$-action on the $(\mathfrak g,\widetilde K)$-module $\omega_b$ introduced in Definition~\ref{gK}.
	\end{rmk}
	
	\begin{rmk}[Discrete spectrum]
		In particular, $H_{b}-|x|^2$ has purely discrete spectrum.
	\end{rmk}
	
	\vspace{24pt}
	We now reinterpret these operators in terms of an $\mathfrak{sl}_2$-triple and apply representation theory.
	
	\begin{prp}[The $\mathfrak{sl}_2$-triple associated with $H_b$]\label{sl}
		The operators $\frac{i}{2}|x|^2$, $\frac{i}{2}H_{b}$, $E+\frac{N+2b}{2}$ act on $W_{b,alg}$ and form an $\mathfrak{sl}_{2}$-triple. That is, 
		\[ \left[E+\frac{N+2b}{2},\frac{i}{2}|x|^2\right] = i|x|^2 \]
		\[ \left[E+\frac{N+2b}{2},\frac{i}{2}H_{b} \right] = -iH_{b} \]
		\[ \left[\frac{i}{2}|x|^2, \frac{i}{2}H_{b}\right] = E+\frac{N+2b}{2} \]
		hold, where $E:=\sum_{k=1}^{N}x_{k}\frac{\partial}{\partial x_{k}}$ is the Euler operator.
	\end{prp}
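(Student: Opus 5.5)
The plan is to work on the graded pieces $f(|x|^2)p(x)$ with $p\in\mathcal H^m(\mathbb R^N)$ and $f$ a polynomial times $e^{-t/2}$; this space is spanned by the $\Phi_{b,\ell,p}$. First we check that the three operators preserve $W_{b,alg}$. Multiplication by $|x|^2$ and the Euler operator $E$ send $f(|x|^2)p(x)$ to $g(|x|^2)p(x)$ with $g$ again a polynomial times $e^{-t/2}$; for $E$ this uses $E(p)=mp$ and $E(f(|x|^2))=2|x|^2f'(|x|^2)$. For $H_b$ we note that
\[
H_b - |x|^2 = 2\cdot\tfrac12(H_b-|x|^2)
\]
acts diagonally on the $\Phi_{b,\ell,p}$ by Proposition~\ref{har}. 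Hence $H_b = (H_b-|x|^2)+|x|^2$ also preserves $W_{b,alg}$. Since the $\Phi_{b,\ell,p}$ with fixed $p$ span the space of functions of the form $(\text{polynomial})(|x|^2)e^{-|x|^2/2}p(x)$, the point is that everything reduces to a radial computation.

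Next we write everything in polar form. For $F=f(|x|^2)p(x)$ we use
\[
\Delta(fp)=\bigl(4tf''(t)+2Nf'(t)+4mf'(t)\bigr)p,\qquad t=|x|^2,
\]
which holds because $\langle\nabla f(|x|^2),\nabla p\rangle = 2f'Ep = 2mf'p$ and $\Delta p=0$. Since $\mathcal R(fp)=2tf'(t)p$, this gives
\[
H_b(fp)=\bigl(4tf''+(2N+4m+4b)f'\bigr)p .
\]
Both $E$ and $H_b$ then act on $f$ by explicit operators in $t$: $E$ acts as $2t\,d/dt+m$, and $H_b$ acts as $4t\,d^2/dt^2+(2N+4m+4b)\,d/dt$. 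The map $f\mapsto fp$ intertwines these, including multiplication by $t$, so it suffices to verify the commutators for these one-variable operators on $f$.

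The three relations then follow. The first is $[E,|x|^2]=2|x|^2$, which is immediate. For the second we use that $H_b$ lowers the degree in $t$ by $1$, which gives $[2t\,d/dt,\,H_b]=-2H_b$. The third is $[|x|^2,H_b]$ in $t$: writing $c=2N+4m+4b$, we have $[t,\,4t\,d^2/dt^2+c\,d/dt]=-8t\,d/dt-c$. Then
\[
\Bigl[\tfrac i2|x|^2,\tfrac i2H_b\Bigr]=-\tfrac14[t,H_b]=2t\,\tfrac{d}{dt}+\tfrac{N}{2}+m+b=E+\tfrac{N+2b}{2},
\]
as required. The main obstacle is the well-definedness of $\mathcal R$, since a function may have several representations of the form $f(|x|^2)p(x)$. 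This is handled by working on the direct sum decomposition by harmonic degree; the decomposition is unique because of the orthogonality of the $\mathcal H^m$ used in Proposition~\ref{orth}. With that in place, the rest is the routine computation above.
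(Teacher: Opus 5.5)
Your argument is correct, but it is organized differently from the paper's.

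The paper works with operator identities. It takes the classical triple $\frac{i}{2}|x|^2,\ \frac{i}{2}\Delta,\ E+\frac{N}{2}$ as known and notes that the perturbation $\frac{2b}{|x|^2}\mathcal{R}$ interacts with it only through the two relations $[\mathcal{R},|x|^2]=2|x|^2$ and $[\mathcal{R},E]=0$. Together with the homogeneity of $|x|^{-2}$, these give $[E,\frac{2b}{|x|^2}\mathcal{R}]=-2\cdot\frac{2b}{|x|^2}\mathcal{R}$ and $[|x|^2,\frac{2b}{|x|^2}\mathcal{R}]=-4b$. So the only effect of the deformation is to shift the constant $\frac{N}{2}$ to $\frac{N+2b}{2}$.

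You instead restrict to each piece $f(|x|^2)p(x)$ with $p\in\mathcal{H}^m(\mathbb{R}^N)$. There $E$ and $H_b$ become the one-variable operators $2t\frac{d}{dt}+m$ and $4t\frac{d^2}{dt^2}+(2N+4m+4b)\frac{d}{dt}$, and you verify the relations in that form. Your formulas for $\Delta(fp)$, $\mathcal{R}(fp)$, $H_b(fp)$ and the three commutators are all correct.

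Your route costs a bit more computation but buys two things. It checks explicitly that the three operators preserve $W_{b,alg}$, which the paper leaves implicit. It also shows that on the degree-$m$ component $H_b$ is the radial Laplacian in effective dimension $N+2m+2b$, which makes the uniform shift of the lowest weights by $b$ transparent. The paper's route is shorter and isolates exactly which two properties of $\mathcal{R}$ are needed.

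Two small remarks:
\begin{itemize}
\item You justify $[2t\frac{d}{dt},H_b]=-2H_b$ by ``lowering the degree in $t$'', which is phrased for polynomials, whereas the relevant $f$ carry a factor $e^{-t/2}$. This is harmless: it is an identity of differential operators with polynomial coefficients. It also follows directly from $[t\frac{d}{dt},\frac{d}{dt}]=-\frac{d}{dt}$ and $[t\frac{d}{dt},t\frac{d^2}{dt^2}]=-t\frac{d^2}{dt^2}$.
\item Well-definedness of $\mathcal{R}$ is most cleanly seen from $E(fp)=(Ef)p+mfp$, that is, $\mathcal{R}=E-m$ on the degree-$m$ component. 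This does not depend on how an element is written as a sum of such products.
\end{itemize}
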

	\begin{proof}
		By the equalities $\bigl[ \mathcal{R}, |x|^{2}\bigr]=2|x|^{2}$ and $\bigl[ \mathcal{R}, E\bigr]=0$, the claim follows.
	\end{proof}
	
	\begin{rmk}\label{correspond}
		Based on Proposition~\ref{sl}, we identify $\mathfrak{sl}_{2}(\mathbb{R})$ with the operators $\frac{i}{2}|x|^{2},\ \frac{i}{2}H_{b},\ E+\frac{N+2b}{2}$ via the correspondence
		\[
		\frac{i}{2}|x|^{2}\leftrightarrow 
		\begin{pmatrix}
			0 & 1 \\
			0 & 0
		\end{pmatrix},\quad
		\frac{i}{2}H_{b}\leftrightarrow 
		\begin{pmatrix}
			0 & 0 \\
			1 & 0
		\end{pmatrix},\quad
		E+\frac{N+2b}{2}\leftrightarrow 
		\begin{pmatrix}
			1 & 0 \\
			0 & -1
		\end{pmatrix}.
		\]
	\end{rmk}
	
	\vspace{4pt}
	\begin{dfn}[The $(\mathfrak g,\widetilde K)$-module $\omega_b$]\label{gK}
		Let $\mathfrak g=\mathfrak{sl}_2(\mathbb R)$ and $\widetilde K=\widetilde{SO}(2)$, the universal covering of $SO(2)$.
		By Corollary~\ref{adjoint} and Proposition~\ref{sl}, the operators
		\[
		\frac{i}{2}|x|^2,\qquad \frac{i}{2}H_b,\qquad E+\frac{N+2b}{2}
		\]
		define a $(\mathfrak g,\widetilde K)$-module structure on $W_{b,alg}$. Here $W_{b,alg}$ is the space defined in Definition~\ref{Walg}.
		We denote this $(\mathfrak g,\widetilde K)$-module by $\omega_b$. 
	\end{dfn}
	
	The action of $\omega_b$ commutes with the natural action of $O(N)$ on $W_{b,alg}\subset L^2(\mathbb R^N,|x|^{2b}dx)$.
	
	\vspace{10pt}
	The action of $\mathfrak{sl}_{2}(\mathbb{R})$ on $W_{b,alg}$ via $\omega_{b}$ is described as follows:
	\begin{prp}[$\mathfrak{sl}_2$-action on $W_{b,alg}$]\label{act1}
		For $p(x) \in \mathcal{H}^{m}(\mathbb{R}^{N})$,
		\begin{align*}
			\MoveEqLeft \frac{H_{b}-|x|^2}{2}\, \Phi_{b,\ell,p}(x) = -(b+\lambda_{N,m}+2\ell+1)\Phi_{b,\ell,p}(x) \\
			\MoveEqLeft \left(E+ \frac{N+2b}{2} - \frac{H_{b}+|x|^{2}}{2}\right)\,  \Phi_{b,\ell,p}(x) =  2(\ell+1)\, \Phi_{b,\ell+1,p}(x)\\
			\MoveEqLeft \left(E+ \frac{N+2b}{2} + \frac{H_{b}+|x|^{2}}{2}\right)\,  \Phi_{b,\ell,p}(x) =  -2(b+\lambda_{N,m}+\ell)\Phi_{b,\ell-1,p}(x)
		\end{align*}
	\end{prp}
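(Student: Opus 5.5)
The first identity is exactly Proposition~\ref{har}, so only the two ladder relations remain. Everything reduces to a one-variable computation. Write $\Phi_{b,\ell,p}(x)=g(|x|^2)\,p(x)$ with $g(t)=e^{-t/2}L^{(\alpha)}_\ell(t)$ and $\alpha:=b+\lambda_{N,m}$. Every operator involved preserves the form $f(|x|^2)p(x)$.

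The needed actions on $f(t)p$ with $t=|x|^2$ are as follows. First, $E(f(t)p)=(2tf'+mf)p$. Second, $\mathcal R(f(t)p)=2tf'\,p$. Third, since $p$ is harmonic and homogeneous of degree $m$,
\[\Delta(f(t)p)=\bigl(4tf''+(2N+4m)f'\bigr)p.\]
Putting these together,
\[H_b(f(t)p)=\bigl(4tf''+4(\alpha+1)f'\bigr)p,\]
using $2N+4m+4b=4(\alpha+1)$. Hence both operators $E+\frac{N+2b}{2}\mp\frac{H_b+|x|^2}{2}$ act on $f(t)p$ through the ordinary differential operator
\[T_\mp f = 2tf'+(\alpha+1)f \mp\bigl(2tf''+2(\alpha+1)f'+\tfrac t2 f\bigr),\]
using $m+\frac{N+2b}{2}=\alpha+1$.

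Next substitute $f=e^{-t/2}L$ with $L=L^{(\alpha)}_\ell$. We have $f'=e^{-t/2}(L'-\tfrac12L)$ and $f''=e^{-t/2}(L''-L'+\tfrac14L)$. A direct simplification gives
\[e^{t/2}T_-f=-2tL''+(4t-2\alpha-2)L'+(2\alpha+2-2t)L,\]
\[e^{t/2}T_+f=2tL''+2(\alpha+1)L'.\]
Now use the Laguerre equation $tL''=(t-\alpha-1)L'-\ell L$ to remove $L''$. This yields
\[e^{t/2}T_-f=2tL'+2(\ell+\alpha+1-t)L,\qquad e^{t/2}T_+f=2tL'-2\ell L.\]

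It then suffices to invoke two standard Laguerre identities:
\[tL^{(\alpha)\prime}_\ell=\ell L^{(\alpha)}_\ell-(\ell+\alpha)L^{(\alpha)}_{\ell-1},\]
\[(\ell+1)L^{(\alpha)}_{\ell+1}=(2\ell+\alpha+1-t)L^{(\alpha)}_\ell-(\ell+\alpha)L^{(\alpha)}_{\ell-1}.\]
The first identity gives $e^{t/2}T_+f=-2(\ell+\alpha)L_{\ell-1}$, which is the third claim. Combining the two identities gives
\[e^{t/2}T_-f=2\bigl[(2\ell+\alpha+1-t)L_\ell-(\ell+\alpha)L_{\ell-1}\bigr]=2(\ell+1)L_{\ell+1},\]
which is the second claim. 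For $\ell=0$ the convention is $L_{-1}=0$. If desired, both Laguerre identities can be checked directly from the explicit sum defining $L^{(\alpha)}_\ell$ by comparing coefficients of $t^k$.

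The main obstacle is bookkeeping. One must get the radial form of $\Delta$ on $f(|x|^2)p$ right, in particular the term $4m f'$ coming from $2\nabla f\cdot\nabla p=4f'\,Ep$, and then the constants must combine into $\alpha+1$. Once that is done, the rest is the classical Laguerre recurrences.
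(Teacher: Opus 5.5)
Your proposal is correct and takes essentially the same route as the paper. The paper just lists the Laguerre differential equation, the three-term recurrence, and the identity $tL^{(\alpha)\prime}_\ell=\ell L^{(\alpha)}_\ell-(\ell+\alpha)L^{(\alpha)}_{\ell-1}$; you carry out the radial reduction and the bookkeeping it leaves implicit (including the convention $L_{-1}=0$), and your intermediate formulas all check out.
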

	\begin{proof}
		These follow from the following identities for Laguerre polynomials:
		\begin{align*}
			\MoveEqLeft t\frac{d^2L^{(\alpha)}_{\ell}(t)}{dt^2}+ (\alpha +1 -t)\frac{dL^{(\alpha)}_{\ell}(t)}{dt} +\ell L^{(\alpha)}_{\ell}(t)= 0 \\
			\MoveEqLeft tL^{(\alpha)}_{\ell}(t) = -(\ell+1)L^{(\alpha)}_{\ell+1}(t) +(2\ell+\alpha +1)L^{(\alpha)}_{\ell}(t)-(\ell+\alpha)L^{(\alpha)}_{\ell-1}(t) \\
			\MoveEqLeft t\frac{dL^{(\alpha)}_{\ell}(t)}{dt} = \ell L^{(\alpha)}_{\ell}(t)-(\ell+\alpha)L^{(\alpha)}_{\ell-1}(t).
		\end{align*}
	\end{proof}
	
	\begin{rmk}\label{Cayley}
		$\mathbf{h}=-\frac{H_{b}-|x|^2}{2},\quad \mathbf{e}=\frac{1}{2}\left(E+ \frac{N+2b}{2} - \frac{H_{b}+|x|^{2}}{2}\right),\quad \mathbf{f}=\frac{1}{2}\left(E+ \frac{N+2b}{2} + \frac{H_{b}+|x|^{2}}{2}\right)$ form an $\mathfrak{sl}_{2}$-triple. That is,
		\begin{align*}
				\left[\mathbf{h},\mathbf{e}\right] &= 2\mathbf{e} \\
			 \left[\mathbf{h},\mathbf{f} \right] &= -2\mathbf{f} \\
			\left[\mathbf{e}, \mathbf{f}\right] &= \mathbf{h}. 
		\end{align*}
		Based on Remark~\ref{correspond}, there is a correspondence $\mathbf{h}\leftrightarrow \bigl(\begin{smallmatrix}
			0 & -i \\
			i & 0
		\end{smallmatrix}\bigr),\, \mathbf{e}\leftrightarrow \frac{1}{2}\bigl(\begin{smallmatrix}
		1 & i \\
		i & -1
		\end{smallmatrix}\bigr),\, \mathbf{f}\leftrightarrow \frac{1}{2}\bigl(\begin{smallmatrix}
		1 & -i \\
		-i & -1
		\end{smallmatrix}\bigr)$. We note that these matrices are obtained from $\bigl(\begin{smallmatrix}
		1 & 0 \\
		0 & -1
		\end{smallmatrix}\bigr),\,\bigl(\begin{smallmatrix}
			0 & 1 \\
			0 & 0
		\end{smallmatrix}\bigr),\,\bigl(\begin{smallmatrix}
			0 & 0 \\
			1 & 0
		\end{smallmatrix}\bigr)$ by the Cayley transform. 
	\end{rmk}
	
	\vspace{10pt}
	We now arrive at the main point of this subsection: the $(\mathfrak g,\widetilde K)$-module $\omega_b$ integrates to a unitary representation of $\widetilde{SL}(2,\mathbb{R})$
	(the universal covering of $SL(2,\mathbb{R})$). The proof follows that in \mbox{Ben Sa\"{i}d}--Kobayashi--{\O}rsted \cite[Section 3.6]{MR2956043}.
	\begin{thm}[Lifting to a unitary representation]\label{lift}
		The $(\mathfrak{g},\widetilde{K})$-module $\omega_{b}$ defined in Definition~\ref{gK} lifts to a unitary representation of $\widetilde{SL}(2,\mathbb{R})$. 
		More precisely, there exists a unique unitary representation $\Omega_{b}$ of $\widetilde{SL}(2,\mathbb{R})$ on $L^{2}(\mathbb{R}^{N},|x|^{2b}dx)$ such that for each $X\in\mathfrak{sl}_2(\mathbb R)$ and $v\in W_{b,alg}$,
		$$ \left.\frac{d}{dt}\right|_{t=0}\Omega_{b}(e^{tX}) v= \omega_{b}(X)v$$
		and for each $k\in\widetilde{K}$ and $v\in W_{b,alg}$,
		$$ \Omega_{b}(k)v = \omega_{b}(k)v.$$
	\end{thm}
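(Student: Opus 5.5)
The plan is to decompose $\omega_b$ into irreducible lowest weight $(\mathfrak g,\widetilde K)$-modules, identify each summand with the $K$-finite vectors of a known unitary lowest weight representation $\pi_\lambda$ of $\widetilde{SL}(2,\mathbb{R})$, and then take the Hilbert direct sum. For fixed $p\in\mathcal{H}^m(\mathbb{R}^N)$, set $W_{b,p}:=\operatorname{span}_{\mathbb C}\{\Phi_{b,\ell,p}\}_{\ell\ge0}$. By Proposition~\ref{act1} this space is stable under $\mathbf h,\mathbf e,\mathbf f$ of Remark~\ref{Cayley}. The element $\mathbf h$ acts on $\Phi_{b,\ell,p}$ by the scalar $b+\lambda_{N,m}+2\ell+1$. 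The operator $\mathbf e$ raises $\ell$ by one with nonzero coefficient $\ell+1$. The operator $\mathbf f$ lowers $\ell$ by one and kills $\Phi_{b,0,p}$. Hence $W_{b,p}$ is an irreducible lowest weight module with lowest $\mathbf h$-weight $\mu:=b+\lambda_{N,m}+1$. Since $b>-N/2$, we have $\mu=b+\frac N2+m>0$.

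Next I would check unitarity. The generators $\frac i2|x|^2$, $\frac i2H_b$ and $E+\frac{N+2b}{2}$ should act skew-symmetrically on $W_{b,alg}$ with respect to $|x|^{2b}dx$. For $|x|^2$ this is immediate. For $E+\frac{N+2b}{2}$ it follows by integrating by parts in polar coordinates, since the weight $r^{N+2b-1}$ makes this operator skew. For $H_b$ it follows from $\mathbf h$ being symmetric (it is diagonal with real eigenvalues in an orthogonal basis, Proposition~\ref{orth}) together with the bracket relations. Alternatively, one can read it off directly from Proposition~\ref{act1} and the norms in Proposition~\ref{orth}: the adjoint of $\mathbf e$ equals $-\mathbf f$ because
\[
2(\ell+1)\,\|\Phi_{b,\ell+1,p}\|^2 = 2(b+\lambda_{N,m}+\ell+1)\,\|\Phi_{b,\ell,p}\|^2 ,
\]
using $\Gamma(\ell+\mu+1)/\ell! = (\ell+\mu)\,\Gamma(\ell+\mu)/\ell!$. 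Thus $W_{b,p}$ is an infinitesimally unitary lowest weight module of lowest weight $\mu>0$. The classical theory (for example Bargmann's classification, or the holomorphic discrete series and its analytic continuation $\pi_{\mu-1}$, as in \cite[Section 3.6]{MR2956043}) gives a unique irreducible unitary representation $\pi_{\mu-1}$ of $\widetilde{SL}(2,\mathbb R)$ whose $\widetilde K$-finite vectors are isomorphic to $W_{b,p}$. This isomorphism is isometric up to a single scalar, because the invariant inner product on an irreducible module is unique up to scaling.

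The closures $\overline{W_{b,p_j}}$ are mutually orthogonal, and by Proposition~\ref{orth}(2) their sum is all of $L^2(\mathbb R^N,|x|^{2b}dx)$. Define $\Omega_b:=\sum^{\oplus}_j \pi^{(j)}$, where each $\pi^{(j)}$ is transported to $\overline{W_{b,p_j}}$. Then $\Omega_b$ is a unitary representation. Its $\widetilde K$-action on each $\Phi_{b,\ell,j}$ is the scalar $e^{i\theta(\mu+2\ell)}$, which agrees with $e^{\frac{it}{2}(H_b-|x|^2)}$ from Corollary~\ref{adjoint} after matching parameters. Its derivative on $W_{b,alg}$ agrees with $\omega_b$, because derivatives on $K$-finite vectors of $\pi_{\mu-1}$ recover the $(\mathfrak g,\widetilde K)$-module.

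For uniqueness, note that any such unitary representation is determined on the dense subspace $W_{b,alg}$, which consists of analytic vectors. Alternatively, $\widetilde{SL}(2,\mathbb R)$ is generated by $\widetilde K$ and $\exp$ of a neighborhood of $0$, and analytic vectors determine $\Omega_b(e^{tX})v$ via the exponential series.

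I expect the main obstacle to be justifying the existence of $\pi_\lambda$ for all $\lambda>-1$, including the non-integral, non-discrete-series range $0<\mu\le1$, with the correct identification of its $K$-finite vectors. I would handle this directly. First construct the representation on a weighted Bergman or Hardy-type space of the disk, or cite the known classification. Then verify the matching of the $\mathbf e,\mathbf f$ coefficients up to rescaling of the basis vectors. A secondary point is to check that the derivative of $\Omega_b$ on $W_{b,alg}$ equals $\omega_b$ on every element of $\mathfrak g$, not only on $\mathbf h,\mathbf e,\mathbf f$. This follows by complex linearity once the three generators are matched.
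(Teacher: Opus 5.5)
Your proposal is correct and follows essentially the same route as the paper: decompose $W_{b,alg}$ via Proposition~\ref{act1} into irreducible lowest weight modules of lowest weight $b+\lambda_{N,m}+1>0$, then invoke the existence and uniqueness of the unitary lowest weight representations $\pi_\lambda$ ($\lambda>-1$) of $\widetilde{SL}(2,\mathbb{R})$. The difference is in packaging. You check infinitesimal unitarity explicitly (via $\mathbf e^*=-\mathbf f$ and the norms in Proposition~\ref{orth}), and you build the Hilbert direct sum and prove uniqueness with an analytic-vector argument by hand, whereas the paper obtains both by citing \cite[Fact 3.26]{MR2956043}; your version thereby spells out the unitarity hypothesis that the paper uses only implicitly.
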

	\begin{proof}
		By Proposition~\ref{act1}, $W_{b,alg} $ decomposes as a representation of $\mathfrak{sl}_{2}(\mathbb{R})\times O(N)$:
		$$W_{b,alg} \cong \sideset{}{^\oplus}{\sum}_{m=0}^{\infty}\pi_{K,b+\lambda_{N,m}}\boxtimes \mathcal{H}^{m}(\mathbb{R}^{N}).$$
		Here, $\pi_{K,\lambda}$ is the lowest weight representation of $\mathfrak{sl}_{2}(\mathbb{R})$ with weight $\lambda +1$. Here the lowest weight is determined from the spectral decomposition of $-\frac{1}{2}(H_{b}-|x|^2)$.  $\mathcal{H}^{m}(\mathbb{R}^{N})$ is the space of spherical harmonics of degree $m$. \par
		
		\vspace{5pt}
		By the facts that ``For a real $\lambda$ with $\lambda > -1$, there exists a unique unitary representation, denoted
		by $\pi_{\lambda}$, of $G= \widetilde{SL}(2,\mathbb{R})$ such that its underlying $(\mathfrak{g}_{\mathbb{C}},K)$-module is isomorphic to $\pi_{K,\lambda}$." which is stated in \cite[Fact 3.27]{MR2956043}  and that ``Any discretely decomposable, infinitesimally unitary $(\mathfrak{g}_{\mathbb{C}},K)$-module is the underlying $(\mathfrak{g}_{\mathbb{C}},K)$-module of a unitary representation of $G$. Furthermore, such a unitary representation is unique." which is stated in \cite[Fact 3.26]{MR2956043} based on T. Kobayashi's theory of discrete decomposable representations \cite{MR1637667,MR1770719}, the claim follows.
	\end{proof}
	
	\vspace{6pt}
	\begin{rmk}[Abuse of notation for $\Omega_b$]
		Let $\rho$ denote the representation of $O(N)$ on
		$L^{2}(\mathbb{R}^{N},|x|^{2b}dx)$ induced by the natural action on $\mathbb{R}^{N}$.
		Since the actions of $\Omega_b$ and $\rho$ commute, they define a representation
		$\Omega_b \boxtimes \rho $
		of $\widetilde{SL}(2,\mathbb{R})\times O(N)$.
		By abuse of notation, we shall also denote this representation by $\Omega_b$ when no confusion is likely to arise.
	\end{rmk}
	
	\begin{cor}[Irreducible decomposition]
		As a representation of $G=\widetilde{SL}(2,\mathbb{R}) \times O(N)$,  $\Omega_{b}$ decomposes as
		\[L^{2}\!\left(\mathbb{R}^{N}, |x|^{2b}dx\right) \cong \sideset{}{^\oplus}{\sum}_{m=0}^{\infty}\pi_{b+\frac{N+2m-2}{2}}\boxtimes \mathcal{H}^{m}(\mathbb{R}^{N}).\]
		
	\end{cor}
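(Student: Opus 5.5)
The plan is to derive the decomposition from the algebraic decomposition of $W_{b,alg}$ in the proof of Theorem~\ref{lift}, together with the density statement of Proposition~\ref{orth}.

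\textbf{Step 1 (the isotypic subspaces).} For each $m\ge 0$ set
\[
W_{b,alg}^{(m)}:=\operatorname{span}_{\mathbb C}\{\Phi_{b,\ell,p}\mid \ell\ge0,\ p\in\mathcal H^m(\mathbb R^N)\},
\]
and let $\mathcal L_m$ be its closure in $L^2(\mathbb R^N,|x|^{2b}dx)$. By Proposition~\ref{orth}(1), the spaces $\mathcal L_m$ for different $m$ are mutually orthogonal. By Proposition~\ref{orth}(2), their Hilbert direct sum is all of $L^2(\mathbb R^N,|x|^{2b}dx)$.

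\textbf{Step 2 (each $\mathcal L_m$ is a subrepresentation).} I will show that each $\mathcal L_m$ is invariant under $\Omega_b$ and under $O(N)$.
\begin{itemize}
\item $O(N)$-invariance is immediate. The action $p\mapsto p\circ g^{-1}$ preserves $\mathcal H^m(\mathbb R^N)$, and it does not touch the radial factor $e^{-|x|^2/2}L_\ell^{(b+\lambda_{N,m})}(|x|^2)$.
\item $\widetilde K$-invariance follows from Proposition~\ref{act1}: the $\Phi_{b,\ell,p}$ are eigenvectors of the generator of $\widetilde K$.
\item $\mathfrak{sl}_2$-invariance also follows from Proposition~\ref{act1}: the raising and lowering operators send $\Phi_{b,\ell,p}$ into $\mathbb C\Phi_{b,\ell\pm1,p}$, so $W^{(m)}_{b,alg}$ is stable.
\end{itemize}
To pass from the Lie algebra to the group, I use the uniqueness part of Fact~3.26 of \cite{MR2956043}, as invoked in Theorem~\ref{lift}. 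That fact gives unitary representations on each $\mathcal L_m$ integrating $W^{(m)}_{b,alg}$. Their direct sum is a unitary representation on $L^2(\mathbb R^N,|x|^{2b}dx)$ integrating $\omega_b$, so by uniqueness it coincides with $\Omega_b$. Hence each $\mathcal L_m$ is $\Omega_b$-invariant and $\Omega_b=\bigoplus_m \Omega_b|_{\mathcal L_m}$.

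\textbf{Step 3 (identifying each summand).} For fixed $m$, choose the basis $\{p_j : m_j=m\}$ of $\mathcal H^m(\mathbb R^N)$. This gives
\[
W^{(m)}_{b,alg}\cong \Bigl(\bigoplus_\ell \mathbb C\,\Phi_{b,\ell,p}\Bigr)\otimes\mathcal H^m(\mathbb R^N).
\]
In this decomposition, $\mathfrak{sl}_2$ acts on the first factor only, and $O(N)$ acts on the second factor only, because the coefficients in Proposition~\ref{act1} do not depend on $p$. The first factor is irreducible and has lowest $\mathbf h$-weight $b+\lambda_{N,m}+1$, attained at $\ell=0$. The lowest weight is positive because $b>-N/2$. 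Therefore the first factor is $\pi_{K,b+\lambda_{N,m}}$, and Fact~3.27 of \cite{MR2956043} identifies its completion with $\pi_{b+\lambda_{N,m}}$. The orthogonality relations of Proposition~\ref{orth}(1) show that the inner product factorizes accordingly. This gives $\mathcal L_m\cong \pi_{b+\lambda_{N,m}}\boxtimes\mathcal H^m(\mathbb R^N)$ as representations of $\widetilde{SL}(2,\mathbb R)\times O(N)$, with $\lambda_{N,m}=\frac{N+2m-2}{2}$. Each summand is irreducible, being an outer tensor product of irreducibles, and this completes the proof.

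\textbf{Main obstacle.} The delicate step is Step 2: passing from invariance at the $(\mathfrak g,\widetilde K)$ level to invariance of the closed subspaces $\mathcal L_m$ under the group. I will handle it through the uniqueness statement as described. An alternative is to note that $\mathcal L_m$ is exactly the $\mathcal H^m$-isotypic component for $O(N)$, which $\Omega_b$ preserves because it commutes with $O(N)$.
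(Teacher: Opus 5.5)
Your proposal is correct and is essentially the paper's argument. The paper reads this corollary off from the proof of Theorem~\ref{lift}: Proposition~\ref{act1} gives $W_{b,alg}\cong\bigoplus_{m}\pi_{K,b+\lambda_{N,m}}\boxtimes\mathcal H^m(\mathbb R^N)$, and Facts~3.26--3.27 of \cite{MR2956043} pass this to the unitary level, so your Steps 1--3 only make explicit what the paper leaves implicit. The one detail worth adding is that irreducibility of $\bigoplus_\ell\mathbb C\,\Phi_{b,\ell,p}$ needs the coefficients $\ell+1$ and $b+\lambda_{N,m}+\ell$ ($\ell\ge 1$) in Proposition~\ref{act1} to be nonzero, which holds because $b+\lambda_{N,m}>-1$.
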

	
	\vspace{12pt}
	
	We now explain in what sense the operator $H_b$ and the representation $\Omega_b$ are specified by representation-theoretic data.
	
	\begin{prp}[Characterization of $H_b$ and $\Omega_{b}$]\label{characterization}
		Let $\Omega$ be a unitary representation of 
		$G=\widetilde{SL}(2,\mathbb{R}) \times O(N)$ 
		on $L^2(\mathbb{R}^N, |x|^{2b}dx)$ 
		satisfying the following conditions, and define $H$ by $d\Omega\!\left(\begin{smallmatrix}0&0\\1&0\end{smallmatrix}\right)
		=
		\frac{i}{2}H$. Then $H=H_{b}$ and $\Omega=\Omega_b$.
		
		\begin{enumerate}
			\item
			\[
			d\Omega\!\left(\begin{smallmatrix}1&0\\0&-1\end{smallmatrix}\right)
			=
			E+\frac{N+2b}{2},
			\qquad
			d\Omega\!\left(\begin{smallmatrix}0&1\\0&0\end{smallmatrix}\right)
			=
			\frac{i}{2}|x|^2.
			\]
			\item $O(N)$ acts by the natural action on $\mathbb{R}^N$.
			\item $\Omega$ decomposes as
			\[L^{2}\!\left(\mathbb{R}^{N}, |x|^{2b}dx\right) \cong \sideset{}{^\oplus}{\sum}_{m=0}^{\infty}\pi_{b+\frac{N+2m-2}{2}}\boxtimes \mathcal{H}^{m}(\mathbb{R}^{N}),\]
			where $\pi_{\lambda}$ is the lowest weight representation of lowest weight $\lambda+1$ with respect to the action of $-\frac{H-|x|^{2}}{2}$ and $\mathcal{H}^{m}(\mathbb{R}^{N})$ is the space of spherical harmonics of degree $m$.

		\end{enumerate}
		
	\end{prp}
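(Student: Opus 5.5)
The plan is to determine $H$ on each isotypic component by the $\mathfrak{sl}_2$ relations, using the lowest weight vectors. First use conditions 2 and 3 to fix the $O(N)$-isotypic structure. The $\mathcal{H}^m(\mathbb{R}^N)$-isotypic component of $L^2(\mathbb{R}^N,|x|^{2b}dx)$ under the natural action is $L^2(\mathbb{R}_{>0},r^{N+2b-1}dr)\otimes \mathcal{H}^m(\mathbb{R}^N)|_{S^{N-1}}$, i.e.\ functions $f(|x|^2)p(x)$ with $p\in\mathcal{H}^m$. By condition 3, on this component $\widetilde{SL}(2,\mathbb{R})$ acts as $\pi_{b+\lambda_{N,m}}$. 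Since $\mathcal{H}^m$ occurs with multiplicity one, Schur's lemma gives $\Omega(g)(f(|x|^2)p)=(\Omega(g)f)(|x|^2)\,p$ with the $\widetilde{SL}(2,\mathbb{R})$-action independent of $p$. So it suffices to work with radial profiles for a fixed $m$, i.e.\ with the operators $E+\frac{N+2b}{2}$ and $\frac{i}{2}|x|^2$, whose action is given by condition 1.

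Next I would locate the $\widetilde K$-finite vectors. Set $\mathbf{e},\mathbf{f},\mathbf{h}$ as in Remark~\ref{Cayley}, but built from $H$ instead of $H_b$. The lowest weight vector $v_m=f_0(|x|^2)p$ of $\pi_{b+\lambda_{N,m}}$ satisfies $\mathbf{h}v_m=(b+\lambda_{N,m}+1)v_m$ and $\mathbf{f}v_m=0$. Adding and subtracting these two equations expresses $\frac{1}{2}(H-|x|^2)v_m$ and $\frac12(H+|x|^2)v_m$. Eliminating $H$ then leaves a first-order equation involving only known operators:
\[
\left(E+\tfrac{N+2b}{2}+|x|^2\right)v_m=(b+\lambda_{N,m}+1)v_m .
\]
Writing $E(f_0(|x|^2)p)=(2tf_0'(t)+mf_0)p$ with $t=|x|^2$, this is an ODE $2tf_0'+(\cdots)f_0+tf_0=0$. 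Its $L^2$ solution is unique up to scalar, namely $f_0=e^{-t/2}$, so $v_m=\Phi_{b,0,p}$. One must interpret $E$ here in the distributional/weak sense on the domain of $d\Omega$ (the smooth vectors), but since $v_m$ is an analytic vector, the ODE argument goes through.

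Then $\mathbf{e}^\ell v_m$ spans the $\widetilde K$-weight spaces, and $\mathbf{e}$ equals $(E+\frac{N+2b}{2})-\mathbf{f}-\frac12\cdot$(stuff). More concretely, $2\mathbf{e}=2(E+\frac{N+2b}{2})-2\mathbf{f}-\ldots$ is not directly known, because it contains $H$. Instead I would argue inductively. The weight-$(\lambda+1+2\ell)$ vector $w_\ell$ satisfies $\mathbf{h}w_\ell=(\lambda+1+2\ell)w_\ell$, and $\mathbf{e}w_\ell+\mathbf{f}w_\ell=(E+\frac{N+2b}{2})w_\ell$ is known. Meanwhile $\mathbf{f}w_\ell\in\mathbb{C}w_{\ell-1}$ and $\mathbf{e}w_\ell\in\mathbb{C}w_{\ell+1}$. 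Hence $w_{\ell+1}$ is determined, up to normalization, as the component of $(E+\frac{N+2b}{2})w_\ell$ orthogonal to $w_{\ell-1}$. Since the same recursion holds for $\Phi_{b,\ell,p}$ by Proposition~\ref{act1}, induction gives $w_\ell\propto\Phi_{b,\ell,p}$, with the constants matched through the standard $\mathfrak{sl}_2$ normalizations. Consequently $H-|x|^2=-2\mathbf{h}$ agrees with $H_b-|x|^2$ on the basis $\{\Phi_{b,\ell,j}\}$ (Proposition~\ref{har}), so $H=H_b$ on $W_{b,alg}$.

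Finally, $\Omega$ and $\Omega_b$ have the same underlying $(\mathfrak g,\widetilde K)$-module on $W_{b,alg}$, which is dense by Proposition~\ref{orth}. The uniqueness in Theorem~\ref{lift} (via \cite[Fact 3.26]{MR2956043}) then gives $\Omega=\Omega_b$, and $H=H_b$ as the closures of the corresponding skew-adjoint generators. I expect the main obstacle to be the inductive identification of the weight vectors. It needs care that the orthogonal projection step is legitimate, i.e.\ that $(E+\frac{N+2b}{2})w_\ell$ lies in the span of $w_{\ell\pm1}$. This holds because it equals $(\mathbf{e}+\mathbf{f})w_\ell$. The remaining issue is domain questions for $E$ acting on smooth vectors of $\Omega$.
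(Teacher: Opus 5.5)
Your argument is correct, and its first half is the paper's: the paper also eliminates $H$ from $\mathbf{f}F=0$ and $\mathbf{h}F=\mu F$ to get $\bigl(E+\frac{N+2b}{2}+|x|^2\bigr)F=\mu F$, and hence $F\in e^{-|x|^2/2}\mathcal{H}^m(\mathbb{R}^N)$.

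The two routes differ in the raising step. You say $\mathbf{e}$ is ``not directly known, because it contains $H$'', but the same elimination trick applies there. Since $-\frac{H}{2}=\mathbf{h}-\frac{|x|^2}{2}$, one has $\mathbf{e}=\frac12\bigl(E+\frac{N+2b}{2}-|x|^2+\mathbf{h}\bigr)$. On a weight vector with $\mathbf{h}$-eigenvalue $\mu_\ell=b+\lambda_{N,m}+1+2\ell$, $\mathbf{e}$ therefore acts as the known operator $\frac12\bigl(E+\frac{N+2b}{2}-|x|^2+\mu_\ell\bigr)$. This is exactly the paper's recursion for $\mathbf{e}^{\ell+1}F$. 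It shows directly that every weight vector, including its normalization, is determined by condition 1 alone. It does not require the basis $\Phi_{b,\ell,p}$ in advance or any orthogonality.

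Your route via $\mathbf{e}+\mathbf{f}=E+\frac{N+2b}{2}$ and projection orthogonal to $w_{\ell-1}$ also works, but it uses more input:
\begin{itemize}
\item unitarity, so that distinct $\widetilde K$-types are orthogonal;
\item one-dimensionality of each weight space in the multiplicity space of a fixed $\mathcal{H}^m$, i.e.\ irreducibility of $\pi_{b+\lambda_{N,m}}$;
\item $\mathbf{e}w_\ell\neq0$;
\item a comparison with Propositions~\ref{act1} and~\ref{orth}.
\end{itemize}
It also identifies $w_\ell$ only up to scalars. That is enough here, since only the $\mathbf{h}$-eigenvalues are needed to recover $H-|x|^2=-2\mathbf{h}$.

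One point of phrasing: your reduction to radial profiles rests on the irreducibility of $\mathcal{H}^m(\mathbb{R}^N)$ under $O(N)$, not on multiplicity one. Irreducibility makes the commutant on the isotypic component $M\otimes\mathcal{H}^m$ equal to $B(M)\otimes 1$. As an $O(N)$-representation, $\mathcal{H}^m$ actually occurs in $L^2$ with infinite multiplicity.
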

	
	\begin{proof}
		We set 
		$\mathbf{h}=-\frac{H-|x|^2}{2},\quad \mathbf{e}=\frac{1}{2}\left(E+ \frac{N+2b}{2} - \frac{H+|x|^{2}}{2}\right),\quad \mathbf{f}=\frac{1}{2}\left(E+ \frac{N+2b}{2} + \frac{H+|x|^{2}}{2}\right)$. Then $\mathbf{h},\mathbf{e}, \mathbf{f}$ form an $\mathfrak{sl}_{2}$-triple. That is,
		\begin{align*}
			\left[\mathbf{h},\mathbf{e}\right] &= 2\mathbf{e} \\
			\left[\mathbf{h},\mathbf{f} \right] &= -2\mathbf{f} \\
			\left[\mathbf{e}, \mathbf{f}\right] &= \mathbf{h}. 
		\end{align*}
		(compare with the $\mathfrak{sl}_2$-triple associated to $(\Omega_b,H_b)$ and its action in Remark~\ref{Cayley} and Proposition~\ref{act1}). 
		\vspace{6pt}
		
		Let $F(x)$ be a lowest weight vector of $\pi_{b+\frac{N+2m-2}{2}}\boxtimes \mathcal{H}^{m}(\mathbb{R}^{N})$. Since $\mathbf{f}F(x)=0$ and $\mathbf{h}F(x)=\left(b+\frac{N+2m}{2}\right)F(x)$,
		\[ (\mathbf{h}-2\mathbf{f})F(x)= \left(E+\frac{N+2b}{2}+|x|^{2}\right)F(x) = \left(b+\frac{N+2m}{2}\right)F(x).\] 
		Solving this, we obtain $F(x)\in e^{-\frac{|x|^{2}}{2}}\mathcal{H}^{m}(\mathbb{R}^{N})$. We compute inductively as
		\begin{align*} 
			\MoveEqLeft \mathbf{e}^{l+1}F(x)=\frac{1}{2}\left(E+\frac{N+2b}{2}-|x|^{2}+\mathbf{h}\right)\mathbf{e}^{l}F(x) \\
			& =\left(E-|x|^{2}+2b+N+2l\right)\mathbf{e}^{l}F(x) 
		\end{align*}
		which is independent of the specific form of $H$ and is determined solely by the given conditions. Since $\mathrm{span}_{\mathbb{C}}\left\{ \mathbf{e}^{l}(e^{-\frac{|x|^{2}}{2}}p(x)) \mid l,m\in\mathbb{Z}_{\ge 0},\ p\in \mathcal{H}^{m}(\mathbb{R}^{N}) \right\}$ is dense in $L^{2}\!\left(\mathbb{R}^{N}, |x|^{2b}dx\right)$ and $\mathbf{h}\cdot \mathbf{e}^{l}F(x) = (b+\frac{N+2m}{2}+2l)\mathbf{e}^{l}F(x)$, the action of $H$ is uniquely determined on a dense subspace, hence on the whole space. This proves the proposition.
			\end{proof}
	
	\vspace{12pt}
	
	\subsection{Smooth vectors for $\Omega_b$}
	In this subsection, we determine the space of smooth vectors for $\Omega_b$.
	
	\vspace{6pt}
	
	\begin{prp}[Smooth vectors of $\Omega_{b}$]\label{smooth}
		The space of smooth vectors for $\Omega_b$ is $\mathcal{S}(\mathbb{R}^N)$.
	\end{prp}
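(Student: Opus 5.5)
We use the standard characterization of smooth vectors for unitary representations of $\widetilde{SL}(2,\mathbb{R})$ that decompose into lowest weight representations. Since $\Omega_b$ is a direct sum of lowest weight representations (Corollary on the irreducible decomposition), a vector $f\in L^2(\mathbb{R}^N,|x|^{2b}dx)$ is smooth if and only if it lies in the domain of every power of the $\widetilde{K}$-generator $\mathbf{h}=-\frac12(H_b-|x|^2)$. In terms of the orthogonal basis of Proposition~\ref{orth}, write
\[
f=\sum_{\ell,j}a_{\ell,j}\Phi_{b,\ell,j}.
\]
Smoothness is then equivalent to
\[
\sum_{\ell,j}|a_{\ell,j}|^2(2\ell+m_j+1)^{2k}\,\|\Phi_{b,\ell,j}\|^2<\infty \quad \text{for all } k.
\]
Here we use the elliptic-type estimate for $\mathbf{h}$ on $\mathfrak{sl}_2$ lowest weight modules: the Casimir is scalar on each $\pi_\lambda$, so $\mathbf{e}$ and $\mathbf{f}$ are controlled by $\mathbf{h}$ via Proposition~\ref{act1}. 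So the proposition reduces to showing that rapid decay of these normalized coefficients is equivalent to $f\in\mathcal{S}(\mathbb{R}^N)$.

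The plan is to split along the $O(N)$-isotypic decomposition and use the radial variable. Fix $p\in\mathcal{H}^m$. The coefficient of $\Phi_{b,\ell,p}$ in $f$ is a Laguerre coefficient of the radial profile $g_p(t)$ of the $p$-component, where $f_p(x)=g_p(|x|^2)p(x)$, taken with weight $t^{\alpha}e^{-t}$ and $\alpha=b+\lambda_{N,m}$. The Laguerre operator
\[
L_\alpha=t\partial_t^2+(\alpha+1-t)\partial_t
\]
is the radial part of $\mathbf{h}$ up to constants, by Proposition~\ref{har}. Rapid decay in $\ell$ is then equivalent to $g_p(t)e^{-t/2}$ being smooth on $[0,\infty)$ with all $L_\alpha$-powers in $L^2(t^\alpha dt)$. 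We then need to relate this to Schwartz behaviour uniformly in $m$.

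\textbf{Inclusion $\mathcal{S}\subset$ smooth vectors.} For $f\in\mathcal{S}(\mathbb{R}^N)$ we check directly that $(H_b-|x|^2)^k f\in L^2(|x|^{2b}dx)$ for all $k$. The operators $\Delta$, $|x|^2$ and $E$ preserve $\mathcal{S}$. The only issue is $\frac{2b}{|x|^2}\mathcal{R}$. Write $\mathcal{R}=E-\left(\left(\left(\frac{N-2}{2}\right)^2-\Delta_{S^{N-1}}\right)^{1/2}-\frac{N-2}{2}\right)$. On the $p$-component, $\frac{1}{|x|^2}\mathcal{R}$ acts as $2\,g_p'(|x|^2)p(x)$, which is smooth and Schwartz. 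Summing over components, we need the bound $\|g_p'\|\lesssim$ polynomial in $m$ times Schwartz seminorms of $f$. This follows because $\Delta_{S^{N-1}}$-powers of $f$ remain Schwartz, so the isotypic components decay faster than any power of $m$. Hence $H_b$ maps $\mathcal{S}$ to itself, at least in the $L^2$ sense with polynomial control, and $f$ is a smooth vector.

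\textbf{Converse.} Given a smooth $f$, we note that $|x|^2$, $E+\frac{N+2b}{2}$ and $H_b$ all belong to $d\Omega_b(\mathfrak{g})$. Hence $f$ lies in the domain of arbitrary polynomials in $|x|^2$, $E$ and $H_b$, and also of $\Delta_{S^{N-1}}$, since the $O(N)$-action is smooth on smooth vectors of the product group. Combining these, $\Delta=H_b-\frac{2b}{|x|^2}\mathcal{R}$ applied repeatedly stays in the weighted $L^2$ space. The singular term near $0$ is handled componentwise, because $\frac{1}{|x|^2}\mathcal{R}$ acts on $g_p(|x|^2)p$ as $2g_p'(|x|^2)p$. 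Then Sobolev embedding, away from the origin with weight $|x|^{2b}$ harmless, together with the $x^\beta\partial^\gamma$ bounds, gives $f\in\mathcal{S}$. Near the origin we argue per isotypic component, using smoothness of $g_p$ on $[0,\infty)$ and summability in $m$.

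\textbf{Main obstacle.} The hardest step is the uniform control near the origin: converting the weighted, non-local information into genuine smoothness at $x=0$, uniformly over all $m$. The first thing to try is to express everything via the Laguerre expansion. Smoothness at $0$ of $g_p(|x|^2)p(x)$ is automatic once $g_p$ is smooth on $[0,\infty)$. Uniform derivative bounds would then come from the classical fact that rapidly decaying Laguerre coefficients $\{a_\ell\}$ give smooth functions, with constants polynomial in $\alpha$, combined with the rapid decay in $m$ that comes from $\Delta_{S^{N-1}}$-smoothness.
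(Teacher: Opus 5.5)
Your first step (smooth vectors $=\bigcap_k D(\mathbf h^k)$, i.e.\ rapid decay of the normalized coefficients) is fine and is also the paper's starting point. The rest of the argument, however, has a genuine gap, located exactly at the step you call the main obstacle.

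In the converse direction, passing from $H_b^k f\in L^2$ to $\Delta^k f\in L^2(\mathbb R^N,|x|^{2b}dx)$ requires that $\frac{1}{|x|^2}\mathcal R$ preserve smooth vectors. The remark that it acts as $2g_p'(|x|^2)p$ gives no norm bound, let alone one summable in $m$. This part can be repaired: $\frac{1}{|x|^2}\mathcal R\,\Phi_{b,\ell,p}=-\Phi_{b,\ell,p}-2\sum_{k<\ell}\Phi_{b,k,p}$, which is a triangular matrix with polynomially bounded entries in the normalized basis.

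The serious problem is regularity at $x=0$ uniformly in $m$. For $b>0$ the space $L^2(|x|^{2b}dx)$ is weaker than $L^2_{loc}$ at the origin; for $b>N/2$ its elements need not even be locally integrable. So you cannot invoke elliptic regularity of $\Delta$ there. The ``classical fact'' you propose also does not hold in the form you need. Relative to $\|\Phi_{b,\ell,p}\|^2=\Gamma(\ell+\alpha+1)/(2\,\ell!)$, with $\alpha=b+\lambda_{N,m}$, the natural bound $|e^{-t/2}L^{(\alpha)}_\ell(t)|\le L^{(\alpha)}_\ell(0)=\binom{\ell+\alpha}{\ell}$ produces factors $\bigl(\binom{\ell+\alpha}{\ell}/\Gamma(\alpha+1)\bigr)^{1/2}$. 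These are exponentially large in $\alpha$, about $e^{\alpha}$ near $\ell\approx\alpha^2$, so they are not polynomial in $\alpha$. Hence rapid decay in $2\ell+m$ does not by itself make $\sum_m f_m$ converge in $C^k$ near $0$. You would have to play the vanishing $|p(x)|\le|x|^m\sup_{S^{N-1}}|p|$ against this growth, and the proposal does not do so.

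The inclusion $\mathcal S\subset W_{b,smooth}$ has a similar hole. Rapid decay of $\|f_m\|$ in $m$ does not control $\|g_p'(|x|^2)p\|$; you need a Hardy-type inequality at the origin with constants polynomial in $m$.

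The idea you are missing is to avoid local analysis altogether by comparing with $b=0$. The paper applies the Laguerre addition formula at $y=0$,
\[
L^{(\lambda_{N,m}+b)}_\ell(t)=\sum_{r=0}^{\ell}\frac{\Gamma(r+b)}{r!\,\Gamma(b)}\,L^{(\lambda_{N,m})}_{\ell-r}(t).
\]
This rewrites $\sum_\ell a_{\ell,j}\Phi_{b,\ell,j}$ as the same function $\sum_q a'_{q,j}\Phi_{0,q,j}$, with $a'_{q,j}=\sum_r a_{q+r,j}\,\Gamma(r+b)/(r!\,\Gamma(b))$. The connection coefficients are $O((1+r)^{b-1})$. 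Once you work with normalized coefficients, the norm ratios $\|\Phi_{0,q,j}\|/\|\Phi_{b,q+r,j}\|$ are polynomially bounded. So rapid decay transfers, giving $W_{b,smooth}\subset W_{0,smooth}$, and replacing $b$ by $-b$ gives the reverse inclusion. Since $W_{0,smooth}=\mathcal S(\mathbb R^N)$ is the classical Hermite-expansion characterization, both the behaviour at the origin and the uniformity in $m$ come for free.
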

	
	\begin{proof}
		We denote the space of smooth vectors for $\Omega_b$ by $W_{b,smooth}$.
		By Proposition~\ref{act1}, we have
		\begin{align*}
			W_{b,smooth}
			=
			\left\{
			\sum_{\ell,j} a_{\ell,j}\Phi_{b,\ell,j}
			\in L^2(\mathbb{R}^N,|x|^{2b}dx)
			:\;
			\begin{aligned}
				&\text{for any $\alpha>0$, there exists $C_\alpha>0$}\\
				&\text{such that } |a_{\ell,j}|
				\le C_\alpha(1+2\ell+m_j)^{-\alpha}
			\end{aligned}
			\right\}.
		\end{align*}
		
		Using the identities for Laguerre polynomials
		$L_\ell^{(\alpha+\beta+1)}(x+y)
		=
		\sum_{k=0}^\ell
		L_{\ell-k}^{(\alpha)}(x)L_k^{(\beta)}(y)$
		and
		$L_\ell^{(\alpha)}(0)
		=
		\frac{\Gamma(\ell+\alpha+1)}{\ell!\Gamma(\alpha+1)}$,
		and rewriting the sum by $\ell=q+r$, we obtain
		\begin{align*}
			\sum_{\ell=0}^\infty a_{\ell,j} L_\ell^{(b+\lambda_{N,m})}\bigl(|x|^2\bigr)
			=
			\sum_{q=0}^\infty
			\left(
			\sum_{r=0}^\infty
			a_{q+r,j}\frac{\Gamma(r+b)}{r!\Gamma(b)}
			\right)
			L_q^{(\lambda_{N,m})}\bigl(|x|^2\bigr).
		\end{align*}
		
		Since $\frac{\Gamma(r+b)}{r!} \thicksim r^{b-1}$ as $r\to\infty$, for any $\alpha>0$ there exists $C'_\alpha>0$ such that
		\[
		\left|
		\sum_{r=0}^\infty
		a_{q+r,j}\frac{\Gamma(r+b)}{r!\Gamma(b)}
		\right|
		\le
		C'_\alpha(1+2q+m_j)^{-\alpha}.
		\]
		
		Thus $W_{b,smooth} \subset W_{0,\mathrm{smooth}}$.
		The reverse inclusion follows by the same argument, and hence
		\[
		W_{b,smooth} = W_{0,\mathrm{smooth}} = \mathcal{S}(\mathbb{R}^N).
		\]
	\end{proof}

	\begin{rmk}\label{conti}
		We define
		\begin{align*}
			\MoveEqLeft H^{s}_{b}(\mathbb{R}^{N}):= \Biggl\{\, \sum_{\ell,j}a_{\ell,j}\Phi_{b,\ell,j} \in L^{2}(\mathbb{R}^{N},|x|^{2b}dx)\hspace{5pt}:\hspace{5pt} \sum_{\ell,j} \left(1+2\ell+m_{j}\right)^{s}\bigr\|a_{\ell,j}\Phi_{b,\ell,j}\bigl\|_{L^2}^{2}  <\infty
			\,\Biggr\} \\
			& \hspace{22pt}= \Biggl\{\, f \in L^{2}(\mathbb{R}^{N},|x|^{2b}dx)\hspace{5pt}:\hspace{5pt} 
			\bigl\| \left(1+|x|^2-H_{b}\right)^{s/2}f \bigr\|_{L^{2}} <\infty
			\,\Biggr\} .
		\end{align*}
		
		Then,
		\begin{align*}
			W_{b,smooth} = \bigcap_{s \in\mathbb{R}} H^{s}_{b}(\mathbb{R}^{N}).
		\end{align*}
		This defines a Fr\'{e}chet topology on $W_{b,smooth}$.
	\end{rmk}
	
	\vspace{12pt}
	
	\subsection{The generalized Fourier transform $\mathcal{F}_b$}\label{2.4}
	In this subsection, we define the generalized Fourier transform $\mathcal{F}_{b}$ and record its basic properties.
	
	\vspace{6pt}
	\begin{dfn}[Generalized Fourier transform]\label{Fo}
		We define the generalized Fourier transform as follows:
		$$\mathcal{F}_{b}:= i^{b+\frac{N}{2}}\,\Omega_{b}(\,e^{\frac{\pi}{2}\bigl(
			\begin{smallmatrix}
				0 & -1 \\
				1 & 0
			\end{smallmatrix}
			\bigl)\,}) = i^{b+\frac{N}{2}}e^{\frac{\pi i}{4}(H_{b}-|x|^2)}. $$
		Here, $i:=e^{\frac{\pi i}{2}}$.
	\end{dfn}
	
	\begin{rmk}[Fourier transform as a Weyl element]\label{Weyl}
		$\mathcal{F}_{b}$ corresponds to a Weyl group element of $\widetilde{SL}_2(\mathbb{R})$. 
	\end{rmk}
	
	\begin{thm}[Properties of the generalized Fourier transform]\label{Fou}
		\begin{enumerate}
			\item \[\mathcal{F}_{b}\Phi_{b,\ell,p}(x) = i^{-(2\ell+m)}\Phi_{b,\ell,p}(x), \]
			where $p(x) \in \mathcal{H}^{m}(\mathbb{R}^{N})$.
			\item The following inversion formula holds:
			\begin{align*}
				\mathcal{F}_{b}^{2}f(x)=f(-x) \hspace{30pt}\text{in particular,}\hspace{30pt}  \mathcal{F}_{b}^{4}=1 
			\end{align*}
			\[ \mathcal{F}_{b}\overline{\mathcal{F}}_{b}=\overline{\mathcal{F}}_{b}\mathcal{F}_{b}=1. \]
		\end{enumerate}
	\end{thm}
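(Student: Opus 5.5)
The plan is to work entirely on the orthogonal basis $\{\Phi_{b,\ell,j}\}$ and then extend by density and unitarity.

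For part (1), I would apply Proposition~\ref{har}: $\frac12(H_b-|x|^2)\Phi_{b,\ell,p}=-(b+\lambda_{N,m}+2\ell+1)\Phi_{b,\ell,p}$. Since $\Phi_{b,\ell,p}$ is an eigenvector of the essentially self-adjoint operator of Corollary~\ref{adjoint}, functional calculus gives
\[
e^{\frac{\pi i}{4}(H_b-|x|^2)}\Phi_{b,\ell,p}=e^{-\frac{\pi i}{2}(b+\lambda_{N,m}+2\ell+1)}\Phi_{b,\ell,p}.
\]
Using $\lambda_{N,m}+1=\frac N2+m$ and the convention $i^{s}=e^{\frac{\pi i}{2}s}$, this eigenvalue equals $i^{-(b+\frac N2)}\,i^{-(2\ell+m)}$. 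Multiplying by the prefactor $i^{b+\frac N2}$ yields $i^{-(2\ell+m)}$. The only point needing care is that the exponent is handled through $e^{\frac{\pi i}{2}s}$ rather than a multivalued power, which is precisely why the definition fixes $i:=e^{\pi i/2}$.

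For part (2), I would first compute on basis vectors: $\mathcal F_b^2\Phi_{b,\ell,p}=(-1)^{2\ell+m}\Phi_{b,\ell,p}=(-1)^m\Phi_{b,\ell,p}$. Because $e^{-|x|^2/2}L_\ell(|x|^2)$ is even and $p$ is homogeneous of degree $m$, we have $\Phi_{b,\ell,p}(-x)=(-1)^m\Phi_{b,\ell,p}(x)$. Thus $\mathcal F_b^2$ and the map $f\mapsto f(-\cdot)$ agree on $W_{b,alg}$. Both operators are bounded: $\mathcal F_b$ is unitary as a scalar multiple of the unitary $\Omega_b(\cdot)$ (the scalar has modulus $e^{-\frac{\pi}{2}\operatorname{Im}}=1$ since $b$ is real), and the reflection is unitary because the weight $|x|^{2b}$ is even. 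By density (Proposition~\ref{orth}(2)), they therefore agree on all of $L^2(\mathbb R^N,|x|^{2b}dx)$. Squaring then gives $\mathcal F_b^4=1$.

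For $\overline{\mathcal F}_b$, I would interpret it as the conjugate operator $f\mapsto\overline{\mathcal F_b\overline f}$. The basis can be chosen real by taking the $p_j$ real-valued, which is possible since $\mathcal H^m$ has a real orthonormal basis. Then $\overline{\mathcal F}_b\Phi_{b,\ell,j}=i^{2\ell+m_j}\Phi_{b,\ell,j}$, so $\mathcal F_b\overline{\mathcal F}_b$ and $\overline{\mathcal F}_b\mathcal F_b$ act as the identity on the basis. Since $\overline{\mathcal F}_b$ is also unitary, both compositions equal the identity by density. Equivalently, $\overline{\mathcal F}_b=\mathcal F_b^{-1}=\mathcal F_b^*$. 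The main obstacle is essentially bookkeeping: applying the branch convention correctly in part (1), and ensuring the complex-conjugation interpretation matches a real choice of basis. If the $p_j$ were not real, I would instead note that conjugation maps $\Phi_{b,\ell,p}$ to $\Phi_{b,\ell,\bar p}$ with $\bar p\in\mathcal H^m$, which gives the same eigenvalue computation.
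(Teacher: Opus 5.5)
Your proposal is correct and takes the same route as the paper: the paper's one-line proof gets the eigenvalue $i^{-(2\ell+m)}$ from Proposition~\ref{har} together with the definition $\mathcal{F}_b=i^{b+N/2}e^{\frac{\pi i}{4}(H_b-|x|^2)}$, and you additionally spell out the parity $\Phi_{b,\ell,p}(-x)=(-1)^m\Phi_{b,\ell,p}(x)$ and the extension by density and unitarity. Your reading of $\overline{\mathcal{F}}_b$ as $f\mapsto\overline{\mathcal{F}_b\bar f}$ agrees with using the conjugate kernel $\overline{B_b(x,y)}$, since $c_{b,N}$ and $|y|^{2b}dy$ are real.
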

	\begin{proof}
		The claim follows from the definition of the generalized Fourier transform $\mathcal{F}_{b}$ and Proposition~\ref{har}.
	\end{proof}
	
	\vspace{12pt}
	
	\subsection{Extension to a holomorphic semigroup}
	In this subsection, we extend $\Omega_{b}$ to a holomorphic semigroup called the Olshanski semigroup. We follow the method of \mbox{Ben Sa\"{i}d}--Kobayashi--{\O}rsted \cite[Section 3.8]{MR2956043} together with \cite[Theorem B]{MR1473443}. 
	\vspace{6pt}

	Let $G=SL(2,\mathbb{R})$, $G_{\mathbb{C}}=SL(2,\mathbb{C})$, and $\widetilde{G}=\widetilde{SL}(2,\mathbb{R})$, and set
	$$
	W := \left\{ \begin{pmatrix}
		a & b \\ c & -a
	\end{pmatrix} :\, a^2+bc\le 0,\; b\ge c \right\}.
	$$
	Then $X\in W$ if and only if $i\,\omega_b(X)\leqq 0$. 
	We define $\Gamma(W):= G\,\mathrm{Exp}(iW)$, which is a subsemigroup of $G_{\mathbb{C}}$. 
	Let $\widetilde{\Gamma}(W)$ be its universal covering. Then
	$\widetilde{\Gamma}(W)= \widetilde{G}\,\mathrm{Exp}(iW) \underset{\mathrm{homeo}}{\cong} \widetilde{G}\times W$.
	The semigroups $\Gamma(W)$ and $\widetilde{\Gamma}(W)$ are called the Olshanski semigroup; they are complex analytic at their interior points.
	
	Since $W= \{0\}\cup \mathrm{Ad}(G)\,\mathbb{R}_{>0}\bigl(\begin{smallmatrix}
		0 & 1 \\ -1 & 0
	\end{smallmatrix}\bigr)\cup \mathrm{Ad}(G)\bigl(\begin{smallmatrix}
		0 & 1 \\ 0 & 0
	\end{smallmatrix}\bigr)$, we have
	$$
	\widetilde{\Gamma}(W) = \widetilde{G}
	\,\cup\,
	\widetilde{G}\,\mathrm{Exp}\!\left(i\,\mathbb{R}_{>0}\bigl(\begin{smallmatrix}
		0 & 1 \\ -1 & 0
	\end{smallmatrix}\bigr)\right)\widetilde{G}
	\,\cup\,
	\widetilde{G}\,\mathrm{Exp}\!\left(i\bigl(\begin{smallmatrix}
		0 & 1 \\ 0 & 0
	\end{smallmatrix}\bigr)\right)\widetilde{G}.
	$$
	
	The interior of $\widetilde{\Gamma}(W)$ coincides with 
	$\widetilde{G}\,\mathrm{Exp}\!\left(i\,\mathbb{R}_{>0}\bigl(\begin{smallmatrix}
		0 & 1 \\ -1 & 0
	\end{smallmatrix}\bigr)\right)\widetilde{G}$,
	which we denote by $\widetilde{\Gamma}(W)_{0}$.

	\begin{prp}[Extension to the holomorphic semigroup]\label{hol}
		The unitary representation $\Omega_{b}: \widetilde{SL}(2,\mathbb{R}) \rightarrow \mathcal{B}\left(L^{2}(\mathbb{R}^{N}, |x|^{2b}\,dx)\right)$ extends to a continuous representation of the Olshanski semigroup $\Omega_{b}: \widetilde{\Gamma}(W) \rightarrow \mathcal{B}\left(L^{2}(\mathbb{R}^{N},|x|^{2b}\,dx)\right)$ which has the following properties: 
		\begin{enumerate}
			\item For any $\gamma\in\widetilde{\Gamma}(W)$, $\|\Omega_{b}(\gamma)\|_{op}\le 1$.
			\item For any $f\in L^2(\mathbb{R}^{N},|x|^{2b}dx)$, the map $\widetilde{\Gamma}(W)_{0}\to \mathbb{C}$, $\gamma \mapsto (\Omega_{b}(\gamma)f,f)$ is holomorphic.
			\item When $\gamma \in \widetilde{\Gamma}(W)_{0}$, $\Omega_{b}(\gamma)$ is a Hilbert--Schmidt operator.
		\end{enumerate}
	\end{prp}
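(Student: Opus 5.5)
The plan is to follow \cite[Section 3.8]{MR2956043}: reduce to the irreducible summands and invoke the general extension theorem \cite[Theorem B]{MR1473443} for unitary highest/lowest weight representations.

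\emph{Positivity on the cone.} The first step is to check that $i\,d\Omega_b(X)\le 0$ for every $X\in W$. By $\mathrm{Ad}(G)$-invariance of $W$ and the description $W=\{0\}\cup \mathrm{Ad}(G)\mathbb{R}_{>0}\bigl(\begin{smallmatrix}0&1\\-1&0\end{smallmatrix}\bigr)\cup\mathrm{Ad}(G)\bigl(\begin{smallmatrix}0&1\\0&0\end{smallmatrix}\bigr)$, it suffices to treat the two representative elements.
\begin{itemize}
\item For $X=\bigl(\begin{smallmatrix}0&1\\-1&0\end{smallmatrix}\bigr)$, Remark~\ref{correspond} gives $d\Omega_b(X)=\frac{i}{2}(|x|^2-H_b)$. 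Hence $i\,d\Omega_b(X)=\frac12(H_b-|x|^2)$, which by Proposition~\ref{har} has eigenvalues $-(b+\lambda_{N,m}+2\ell+1)<0$ on the basis $\Phi_{b,\ell,j}$. This uses $b>-N/2$, which gives $b+\lambda_{N,m}+1>0$.
\item For $X=\bigl(\begin{smallmatrix}0&1\\0&0\end{smallmatrix}\bigr)$, we have $i\,d\Omega_b(X)=-\frac12|x|^2\le 0$.
\end{itemize}
This justifies the sentence ``$X\in W$ iff $i\,\omega_b(X)\le 0$.''

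\emph{Construction of the extension.} The key tool is \cite[Theorem B]{MR1473443}. For a unitary representation $\pi$ of $\widetilde G$ with $i\,d\pi(X)\le 0$ on the cone $W$, it provides a unique extension to $\widetilde\Gamma(W)$ that is a strongly continuous contraction representation, holomorphic on the interior. Concretely, one sets
\[
\Omega_b(g\,\mathrm{Exp}(iX)):=\Omega_b(g)\,e^{-i\,d\Omega_b(X)}\qquad (X\in W).
\]
Here $e^{-i\,d\Omega_b(X)}=e^{i\cdot i\,d\Omega_b(X)}$ is defined by spectral calculus as the exponential of a nonpositive self-adjoint operator. This immediately gives (1): the factor $\Omega_b(g)$ is unitary and the exponential factor is a contraction. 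Alternatively, one applies the theorem to each summand $\pi_{b+\lambda_{N,m}}\boxtimes\mathcal H^m$ of the irreducible decomposition and takes the direct sum, with uniform bound $1$. For (2), holomorphy of $\gamma\mapsto(\Omega_b(\gamma)f,f)$ holds on each finite partial sum. Uniform boundedness by $\|f\|^2$ then gives locally uniform convergence, hence holomorphy of the limit.

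\emph{Hilbert--Schmidt property.} For (3), write $\gamma=g_1\,\mathrm{Exp}(itX_0)\,g_2$ with $t>0$ and $X_0=\bigl(\begin{smallmatrix}0&1\\-1&0\end{smallmatrix}\bigr)$. Since the $\Omega_b(g_i)$ are unitary, it suffices to show that $e^{\frac t2(H_b-|x|^2)}$ is Hilbert--Schmidt. By Proposition~\ref{har} this operator is diagonal in the orthogonal basis, with squared Hilbert--Schmidt norm
\[
\sum_{\ell,j}e^{-2t(b+\lambda_{N,m_j}+2\ell+1)}=\sum_{m}\dim\mathcal H^m(\mathbb R^N)\sum_\ell e^{-2t(b+\frac N2+m+2\ell)}.
\]
Since $\dim\mathcal H^m$ grows polynomially in $m$, this sum is finite for $t>0$.

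\emph{Main obstacle.} The delicate point is verifying the continuity of the extension on the boundary strata together with the semigroup law. This is exactly what \cite[Theorem B]{MR1473443} supplies. The plan is therefore to confirm its hypotheses, namely the positivity established in the first step and the lowest weight property of every summand, rather than to reprove it.
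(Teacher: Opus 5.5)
Your proposal follows the paper's proof: the extension and properties 1--2 are taken from \cite[Theorem B]{MR1473443} (your positivity check on the two orbit representatives is exactly the hypothesis the paper records as ``$X\in W$ iff $i\,\omega_b(X)\le 0$''), and property 3 comes from the diagonal action $\Omega_b(e^{tiX_0})\Phi_{b,\ell,j}=e^{-t(b+\lambda_{N,m_j}+2\ell+1)}\Phi_{b,\ell,j}$, whose Hilbert--Schmidt sum you bound correctly. One sign slip needs fixing: $\mathrm{Exp}(iX)$ acts by $e^{i\,d\Omega_b(X)}$ (the complex-linear extension of $d\Omega_b$, consistent with $\Omega_b(e^{tiX_0})=e^{\frac t2(H_b-|x|^2)}$), not by $e^{-i\,d\Omega_b(X)}$, which is the exponential of a nonnegative operator and hence unbounded; the identity $e^{-i\,d\Omega_b(X)}=e^{i\cdot i\,d\Omega_b(X)}$ you wrote is false.
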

	
	\begin{proof}
		The extension and assertions 1 and 2 follow from  \cite[Theorem B]{MR1473443}.
		
		For 3, by 
		$\widetilde{\Gamma}(W)_{0}= \widetilde{SL}(2,\mathbb{R})\,\mathrm{Exp}\left(\,i\,\mathbb{R}_{> 0}\bigl(\begin{smallmatrix}
			0 & 1 \\
			-1 & 0
		\end{smallmatrix}\bigr)\right)\widetilde{SL}(2,\mathbb{R})$, 
		we need to show that 
		$\Omega_{b}(e^{ti\left(\begin{smallmatrix}
				0 & 1 \\
				-1 & 0
			\end{smallmatrix}\right)})$ is a Hilbert--Schmidt operator for $\mathrm{Re}(t)>0$. 
		This follows from the formula  
		$\Omega_{b}(e^{ti\left(\begin{smallmatrix}
				0 & 1 \\
				-1 & 0
			\end{smallmatrix}\right)}) \Phi_{b,\ell,j}(x) 
		= e^{-t(b+\lambda_{N,m_{j}}+2\ell+1)}\Phi_{b,\ell,j}(x)$.
	\end{proof}
	
	\vspace{12pt}
	
	\subsection{The function $\mathscr{I}_{b,\nu}(w,t)$}\label{2.6}
	
	In Subsection~\ref{2.7}, we describe the action of the representation $\Omega_b$, constructed in Theorem~\ref{lift} and extended in Proposition~\ref{hol}, in terms of integral kernels.
	As a preparation for this, we introduce the auxiliary function $\mathscr{I}_{b,\nu}(w,t)$ and study its basic properties, including an integral representation as in Proposition~\ref{summation} and growth estimates as in Propositions~\ref{bound} and \ref{bound2}.
	
	\vspace{12pt}
	
	The kernel formulas in the next subsection are naturally expressed in terms of the following function.
	\begin{dfn}[The function $\mathscr{I}_{b,\nu}(w,t)$]\label{Ibt}
		Assume \(\nu>-1\) and \(b>-\nu-1\). For $w\in\mathbb C$ and $t\in[-1,1]$, we define
		\[
		\mathscr{I}_{b,\nu}(w,t)
		:=
		\sum_{m=0}^{\infty}
		\frac{\Gamma(b+\nu+1)}{\Gamma(b+\nu+m+1)}
		\left(\frac{w}{2}\right)^m
		\mathcal{I}_{b+\nu+m}(w)\,
		\widetilde{C}^{(\nu)}_{m}(t).
		\]
	\end{dfn}
	
	Here
	\[
	\mathcal{I}_{\nu}(w)
	:=
	\sum_{m=0}^{\infty}
	\frac{(w/2)^{2m}}{(\nu+1)_m\,m!}
	\]
	denotes the normalized modified Bessel function, and
	\[
	\widetilde{C}_{m}^{(\nu)}(t)
	:=
	\frac{m+\nu}{\nu}
	\sum_{k=0}^{\lfloor m/2\rfloor}
	(-1)^k
	\frac{\Gamma(m-k+\nu)}{\Gamma(\nu)\,k!\,(m-2k)!}
	(2t)^{m-2k}
	\]
	denotes the normalized Gegenbauer polynomial.
	
	\begin{rmk}
		When $b=0$, the function $\mathscr{I}_{b,\nu}$ reduces to
		\[
		\mathscr{I}_{0,\nu}(w,t)=e^{wt},
		\]
		which follows from the Gegenbauer expansion of the exponential function; see, e.g., \cite[7.13(14)]{MR1034956}. See also \cite[(4.40) and (4.45)]{MR2956043}.
	\end{rmk}
	
	\begin{prp}[Explicit formula for $\mathscr{I}_{b,\nu}(w,t)$]\label{summation}
		For $b>0, \nu>-1, w\in\mathbb C$, and $t\in[-1,1]$, 
		\[
		\mathscr{I}_{b,\nu}(w,t)
		=
		\frac{1}{B(b,\nu+1)}
		\int_0^1
		u^{b-1}(1-u)^\nu
		\mathcal{I}_b\bigl(uw\bigr)\,e^{(1-u)wt}\,du.
		\]
	\end{prp}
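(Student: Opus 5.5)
The plan is to expand the right-hand side in normalized Gegenbauer polynomials and compare coefficients with Definition~\ref{Ibt}. The starting point is the case $b=0$ recorded in the remark above:
\[
e^{zt}=\mathscr{I}_{0,\mu}(z,t)=\sum_{m=0}^{\infty}\frac{\Gamma(\mu+1)}{\Gamma(\mu+m+1)}\left(\frac{z}{2}\right)^m\mathcal{I}_{\mu+m}(z)\,\widetilde{C}^{(\mu)}_m(t).
\]
I will apply it with $\mu=\nu$ and $z=(1-u)w$. The factor $\bigl(\tfrac{(1-u)w}{2}\bigr)^m$ combines with $(1-u)^\nu$.

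For $u\in[0,1]$ and $t\in[-1,1]$, the Gegenbauer series converges absolutely and uniformly, since $|\widetilde{C}^{(\nu)}_m(t)|$ grows only polynomially in $m$ while the Bessel coefficients decay factorially. Also $b>0$ makes $u^{b-1}$ integrable. I can therefore interchange sum and integral, and the right-hand side becomes
\[
\sum_{m=0}^{\infty}\frac{\Gamma(\nu+1)}{\Gamma(\nu+m+1)}\left(\frac{w}{2}\right)^m\widetilde{C}^{(\nu)}_m(t)\cdot\frac{1}{B(b,\nu+1)}\int_0^1u^{b-1}(1-u)^{\nu+m}\,\mathcal{I}_b(uw)\,\mathcal{I}_{\nu+m}\bigl((1-u)w\bigr)\,du .
\]
Comparing with Definition~\ref{Ibt} term by term, it suffices to prove the Bessel convolution identity, for $\beta=\nu+m$:
\[
\int_0^1u^{b-1}(1-u)^{\beta}\,\mathcal{I}_b(uw)\,\mathcal{I}_{\beta}\bigl((1-u)w\bigr)\,du
=\frac{\Gamma(b)\Gamma(\beta+1)}{\Gamma(b+\beta+1)}\,\mathcal{I}_{b+\beta}(w).
\]
This is the $m$-th coefficient once one notes $B(b,\nu+1)\,\Gamma(\nu+m+1)/\Gamma(\nu+1)\cdot\Gamma(b)^{-1}\cdots$ reorganizes into $\Gamma(b+\nu+1)/\Gamma(b+\nu+m+1)$.

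To prove the convolution identity I will expand both Bessel functions in power series in $w$ and integrate termwise with Beta integrals. The coefficient of $(w/2)^{2n}$ on the left is then
\[
\sum_{k+j=n}\frac{\Gamma(b+1)\Gamma(\beta+1)\,\Gamma(b+2k)\,\Gamma(\beta+2j+1)}{\Gamma(b+k+1)\,k!\,\Gamma(\beta+j+1)\,j!\,\Gamma(b+\beta+2n+1)} .
\]
This must equal $\Gamma(b)\Gamma(\beta+1)/\bigl((b+\beta+1)_n\,n!\,\Gamma(b+\beta+1)\bigr)$.

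The main obstacle is this finite combinatorial identity. I would apply the duplication formula to $\Gamma(b+2k)$ and $\Gamma(\beta+2j+1)$, which turns the sum into a terminating ${}_3F_2$ or ${}_2F_1$ at argument $1$, and then try Chu--Vandermonde or Pfaff--Saalschütz.

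If that becomes messy, the fallback is an integral representation. I would write $\mathcal{I}_\mu(z)=\frac{1}{B(\frac12,\mu+\frac12)}\int_{-1}^1e^{zs}(1-s^2)^{\mu-\frac12}\,ds$ (Poisson) for both factors and interpret the left side as a Dirichlet-type integral. Alternatively I would check that both sides, as entire functions of $w$, satisfy the same second-order modified Bessel ODE with matching value at $w=0$. The value at $w=0$ is just $B(b,\beta+1)$ on both sides, which gives a quick consistency check of the normalization.
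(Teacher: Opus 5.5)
Your reduction is the same as the paper's. The paper also inserts the $b=0$ expansion $e^{zt}=\mathscr{I}_{0,\nu}(z,t)$ at $z=(1-u)w$, interchanges sum and integral, and rests everything on the Bessel convolution
\[
\int_0^1u^{b-1}(1-u)^{\beta}\,\mathcal{I}_b(uw)\,\mathcal{I}_\beta\bigl((1-u)w\bigr)\,du=B(b,\beta+1)\,\mathcal{I}_{b+\beta}(w).
\]
Your Gamma-factor bookkeeping and your justification of the interchange are correct. The difference is only in how this convolution is proved. The paper derives it from $\int_0^w J_\nu(x)J_\mu(w-x)\,\frac{dx}{x}=\frac1\nu J_{\nu+\mu}(w)$, which it gets from the Laplace convolution theorem using $\mathcal{L}(J_\alpha)(s)=(\sqrt{1+s^2}-s)^\alpha/\sqrt{1+s^2}$ and $\mathcal{L}(J_\alpha/x)(s)=\frac1\alpha(\sqrt{1+s^2}-s)^\alpha$. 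It then substitutes $x=uw$ and passes from $J$ to $\mathcal{I}$, both sides being entire in $w$.

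Your power-series route is sound, but the way you propose to finish it would stall. After the duplication formula the coefficient sum is a balanced terminating ${}_4F_3(1)$, not a ${}_3F_2$. Summing over $k$, its upper parameters are $\frac b2,\frac{b+1}2,-\beta-n,-n$ and its lower parameters are $b+1,-\frac\beta2-n,\frac{1-\beta}2-n$. So neither Chu--Vandermonde nor Pfaff--Saalsch\"utz applies. After dividing both sides by $\Gamma(b)\Gamma(\beta+1)/\Gamma(b+\beta+2n+1)$, the identity you need is
\[
\sum_{k+j=n}\frac{b}{b+2k}\binom{b+2k}{k}\binom{\beta+2j}{j}=\binom{b+\beta+2n}{n}.
\]
This is the Hagen--Rothe convolution with step $2$, taking $x=b$ and $y=\beta+2n$. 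It follows at once from two generating functions. Let $c(z)=\frac{1-\sqrt{1-4z}}{2z}$, which solves $c=1+zc^2$. By Lagrange inversion, $\sum_k\frac{r}{r+2k}\binom{r+2k}{k}z^k=c(z)^r$ and $\sum_j\binom{r+2j}{j}z^j=c(z)^r/\sqrt{1-4z}$. Multiply the first with $r=b$ by the second with $r=\beta$ and compare coefficients of $z^n$.

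With this step supplied, your proof is complete and purely algebraic, and it is valid for all $w\in\mathbb{C}$ directly. It needs no Laplace-transform tables, no uniqueness theorem, and no analytic continuation in $w$. The paper's route is shorter once the two Laplace transforms are taken as known. Your Poisson-integral and ODE fallbacks are not yet specified enough to count as arguments.
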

	
	\vspace{12pt}
	
	\begin{proof}

	We begin with a lemma on Bessel functions.
	\begin{lmm}\label{bes}
		For $\mathrm{Re}(\nu)>0$, $\mathrm{Re}(\mu)>-1$, and $w>0$,
		\[
		\int_{0}^{w} J_{\nu}(x)J_{\mu}(w-x)\,\frac{dx}{x}
		=
		\frac{1}{\nu}J_{\nu+\mu}(w).
		\]
	\end{lmm}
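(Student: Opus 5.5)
The plan is to prove Lemma~\ref{bes} with the Laplace transform, using the convolution theorem. Both sides are functions of $w>0$, and the left side is a Laplace convolution: put $f(x)=J_\nu(x)/x$ and $g(x)=J_\mu(x)$. Then the left side equals $(f*g)(w)=\int_0^w f(x)g(w-x)\,dx$. The assumptions make everything integrable at $0$. Since $J_\nu(x)\sim (x/2)^\nu/\Gamma(\nu+1)$ with $\mathrm{Re}\,\nu>0$, $f$ is integrable near $0$. Since $\mathrm{Re}\,\mu>-1$, $g$ is locally integrable. Both functions are bounded as $x\to\infty$, so their Laplace transforms converge absolutely for $\mathrm{Re}\,s>0$. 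The convolution theorem then gives $\mathcal L[f*g](s)=\mathcal L[f](s)\,\mathcal L[g](s)$.

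Next I would quote the two classical transforms, valid for $\mathrm{Re}\,s>0$:
\[
\mathcal L[J_\mu](s)=\frac{\bigl(\sqrt{s^2+1}-s\bigr)^{\mu}}{\sqrt{s^2+1}}\quad(\mathrm{Re}\,\mu>-1),
\qquad
\mathcal L\!\left[\frac{J_\nu(x)}{x}\right](s)=\frac{\bigl(\sqrt{s^2+1}-s\bigr)^{\nu}}{\nu}\quad(\mathrm{Re}\,\nu>0).
\]
If the second formula is not taken as known, I would derive it by integrating the first in $s$ from $s$ to $\infty$, since $\mathcal L[h(x)/x](s)=\int_s^\infty \mathcal L[h](\sigma)\,d\sigma$. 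Substituting $\sigma=\sinh\theta$ gives $\sqrt{\sigma^2+1}-\sigma=e^{-\theta}$, so the integral becomes $\int e^{-\nu\theta}\,d\theta=e^{-\nu\theta_0}/\nu$, where $\sinh\theta_0=s$.

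Multiplying the two transforms gives $\frac1\nu\bigl(\sqrt{s^2+1}-s\bigr)^{\nu+\mu}/\sqrt{s^2+1}$. This is exactly $\frac1\nu\mathcal L[J_{\nu+\mu}](s)$, which is legitimate because $\mathrm{Re}(\nu+\mu)>-1$. Both sides of the lemma are continuous on $(0,\infty)$ and have equal Laplace transforms on a half-plane. By uniqueness of the Laplace transform (Lerch's theorem) they agree for every $w>0$, which proves the lemma.

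The one step that needs real care is the analytic justification: checking absolute convergence, so that Fubini applies in the convolution theorem, and making sure the principal branch of $(\sqrt{s^2+1}-s)^{\mu}$ is the correct one when the parameters are complex. Both issues are handled by first restricting to real $s>0$, where $\sqrt{s^2+1}-s\in(0,1)$, and then using uniqueness. As an alternative route, in case the transform formulas feel too heavy, I could expand both Bessel functions in their power series. Integrating term by term with the Beta integral $\int_0^w x^{\nu+2k-1}(w-x)^{\mu+2l}\,dx$ reduces the claim to a Vandermonde-type summation. The Laplace route is cleaner, so that is the one I would present.
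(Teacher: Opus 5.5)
Your proposal is correct and follows the paper's proof essentially verbatim. The paper also writes the left side as the Laplace convolution of $J_\nu(x)/x$ with $J_\mu$, multiplies the same two classical transforms, and concludes by uniqueness of the Laplace transform. Your derivation of $\mathcal L[J_\nu(x)/x]$ via $\int_s^\infty \mathcal L[J_\nu](\sigma)\,d\sigma$, and your remarks on absolute convergence and on continuity (which upgrades Lerch's almost-everywhere equality to equality for every $w>0$), simply fill in details the paper leaves implicit.
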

	
	\begin{proof}
		Put $f_\nu(x)=J_\nu(x)/x$ and $g_\mu(x)=J_\mu(x)$.
		Then the left-hand side is $(f_\nu*g_\mu)(w)$.
		Taking the Laplace transform,
		$\mathcal L(f_\nu*g_\mu)
		=
		\mathcal L(f_\nu)\mathcal L(g_\mu)$.
		Using $\mathcal L(J_\alpha)(s)
		=
		\frac{(\sqrt{1+s^2}-s)^\alpha}{\sqrt{1+s^2}}$, and $\mathcal L(J_\alpha/x)(s)
		=
		\frac{1}{\alpha}(\sqrt{1+s^2}-s)^\alpha$ ,
		we obtain
		$\mathcal L(f_\nu*g_\mu)
		=
		\frac{1}{\nu}
		\frac{(\sqrt{1+s^2}-s)^{\nu+\mu}}{\sqrt{1+s^2}}
		=
		\mathcal L\!\left(\frac{1}{\nu}J_{\nu+\mu}\right)$.
		The result follows from uniqueness of the Laplace transform.
	\end{proof}
	
	\vspace{12pt}
		
		We now return to the proof.
		
	\vspace{6pt}
	
		By Lemma~\ref{bes},
		\[
		\mathcal{I}_{\alpha+\beta}(w)
		=
		\frac{1}{B(\alpha,\beta+1)}
		\int_0^1
		u^{\alpha-1}(1-u)^\beta
		\mathcal{I}_\alpha\bigl(uw\bigr)\mathcal{I}_\beta\bigl((1-u)w\bigr)\,du.
		\]
		
		Hence,
		
		\begin{align*}
			\MoveEqLeft \mathscr{I}_{b,\nu}(w,t) \\
			&=
			\sum_{m=0}^{\infty}
			\frac{\Gamma(b+\nu+1)}{\Gamma(b+\nu+m+1)}
			\left(\frac{w}{2}\right)^m
			\mathcal{I}_{b+\nu+m}(w)\,
			\widetilde{C}_m^{(\nu)}(t) \\
			&=
			\frac{1}{B(b,\nu+1)}
			\sum_{m=0}^{\infty}
			\frac{\Gamma(\nu+1)}{\Gamma(\nu+m+1)}
			\left(\frac{w}{2}\right)^m
			\widetilde{C}_m^{(\nu)}(t)
			\int_0^1
			u^{b-1}(1-u)^{\nu+m}
			\mathcal{I}_b\bigl(uw\bigr)\mathcal{I}_{\nu+m}\bigl((1-u)w\bigr)\,du \\
			&=
			\frac{1}{B(b,\nu+1)}
			\int_0^1
			u^{b-1}(1-u)^\nu
			\mathcal{I}_b\bigl(uw\bigr)\,
			\mathscr{I}_{0,\nu}\bigl((1-u)w,t\bigr)\,du \\
			&=
			\frac{1}{B(b,\nu+1)}
			\int_0^1
			u^{b-1}(1-u)^\nu
			\mathcal{I}_b\bigl(uw\bigr)\,e^{(1-u)wt}\,du.
		\end{align*}
	\end{proof}
	
	\vspace{12pt}
	
	We next record estimates for $\mathscr I_{b,\nu}(w,t)$ that will be needed in Subsection~\ref{2.7}.
	\begin{prp}[A bound for $\mathscr{I}_{b,\nu}(w,t)$]\label{bound}
		Assume \(\nu>-1\), \(t\in[-1,1]\), \(w\in\mathbb C\) and \(b\ge 0\). Then,  
		\[
		|\mathscr{I}_{b,\nu}(w,t)|\le e^{|\mathrm{Re}(w)|}.
		\]
	\end{prp}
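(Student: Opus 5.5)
Our plan is to reduce everything to the integral representation of Proposition~\ref{summation}, handle $b>0$ there, and treat $b=0$ separately.

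\textbf{The case $b=0$.} By the remark after Definition~\ref{Ibt}, $\mathscr I_{0,\nu}(w,t)=e^{wt}$. Hence $|\mathscr I_{0,\nu}(w,t)|=e^{t\,\mathrm{Re}(w)}\le e^{|\mathrm{Re}(w)|}$, since $|t|\le 1$.

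\textbf{The case $b>0$.} Proposition~\ref{summation} writes $\mathscr I_{b,\nu}(w,t)$ as an average of $\mathcal I_b(uw)\,e^{(1-u)wt}$ against the probability measure
\[
\frac{u^{b-1}(1-u)^\nu}{B(b,\nu+1)}\,du \quad\text{on } [0,1].
\]
Indeed, the weight is positive and integrates to $1$ because $b>0$ and $\nu>-1$. So it suffices to prove the pointwise bound
\[
|\mathcal I_b(uw)\,e^{(1-u)wt}|\le e^{|\mathrm{Re}(w)|}.
\]
The exponential factor satisfies $|e^{(1-u)wt}|=e^{(1-u)t\,\mathrm{Re}(w)}\le e^{(1-u)|\mathrm{Re}(w)|}$. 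It therefore remains to show
\[
|\mathcal I_b(z)|\le e^{|\mathrm{Re}(z)|}\qquad\text{for } b\ge 0,\ z\in\mathbb C .
\]
Applying this with $z=uw$, and using $u|\mathrm{Re}(w)|+(1-u)|\mathrm{Re}(w)|=|\mathrm{Re}(w)|$, gives the claim.

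\textbf{The Bessel bound.} For $b>-1/2$ we use the Poisson-type integral representation
\[
\mathcal I_b(z)=\frac{\Gamma(b+1)}{\Gamma(1/2)\Gamma(b+1/2)}\int_{-1}^{1}e^{zs}(1-s^2)^{b-1/2}\,ds .
\]
Here the weight is again a positive probability density on $[-1,1]$. Since $|e^{zs}|=e^{s\,\mathrm{Re}(z)}\le e^{|\mathrm{Re}(z)|}$, the estimate follows at once. The representation itself can be checked by expanding $e^{zs}$ termwise and computing Beta integrals, which reproduces the series defining $\mathcal I_b$.

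\textbf{The main obstacle.} The key step is recognising that both $\mathscr I_{b,\nu}$ and $\mathcal I_b$ are averages of exponentials against positive probability measures; this positivity is exactly what requires $b\ge 0$. Once that is in place, only the careful use of $|\mathrm{Re}(\cdot)|$ for complex $w$ needs attention. Working directly with the Gegenbauer series would be hopeless for bounds of this kind, which is why we avoid it.
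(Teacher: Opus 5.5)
Your proposal is correct and follows essentially the same route as the paper: average over the probability measure from Proposition~\ref{summation}, combined with the bound $|\mathcal I_b(z)|\le e^{|\mathrm{Re}(z)|}$ obtained from the Poisson-type integral representation. Your explicit treatment of the endpoint $b=0$ via $\mathscr I_{0,\nu}(w,t)=e^{wt}$ fills a case that the paper's proof, which only writes out $b>0$, leaves implicit.
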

	
	\vspace{12pt}
	
	\begin{proof}
		By Proposition~\ref{summation}, for $b>0$ we have
		\begin{align*}
			\bigl|\mathscr{I}_{b,\nu}(w,t)\bigr|
			&\le
			\frac{1}{B(b,\nu+1)}
			\int_0^1
			u^{b-1}(1-u)^\nu
			\bigl|\mathcal{I}_b\bigl(uw\bigr)\bigr|
			\bigl|e^{(1-u)wt}\bigr|\,du \\
			&\le
			\frac{1}{B(b,\nu+1)}
			\int_0^1
			u^{b-1}(1-u)^\nu
			e^{|\mathrm{Re}(w)|u}e^{|\mathrm{Re}(w)|(1-u)}\,du \\
			&=
			e^{|\mathrm{Re}(w)|}.
		\end{align*}
		Here, we used the inequality $ |\mathcal{I}_b(z)|\le e^{|\mathrm{Re}(z)|}\, \left(b>-\frac12\right),$
		which follows from the integral representation
		$\mathcal{I}_b(z)
		=
		\frac{1}{B\!\left(b+\frac12,\frac12\right)}
		\int_{-1}^1 e^{zs}(1-s^2)^{b-\frac12}\,ds$.
	\end{proof}
	
	\vspace{12pt}
	
	The preceding estimate extends to the range $b>-\nu-1$ after a slight modification of the argument.
	
	\begin{prp}[Polynomial-exponential bound for \(\mathscr I_{b,\nu}(w,t)\)]\label{bound2}
		Assume \(\nu>-1\), \(t\in[-1,1]\), \(w\in\mathbb C\) and \(b>-\nu-1\). Then there exist
		constants \(C_{b,\nu}>0\) and \(M_{b,\nu}\ge0\) such that
		\[
		\bigl|\mathscr I_{b,\nu}(w,t)\bigr|
		\le
		C_{b,\nu}(1+|w|)^{M_{b,\nu}}e^{|\mathrm{Re}\,w|}.
		\]
	\end{prp}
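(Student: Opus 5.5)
Since Proposition~\ref{bound} already settles the case $b\ge 0$ (with $C_{b,\nu}=1$, $M_{b,\nu}=0$), the work is for $-\nu-1<b<0$. The plan is to reduce to a nonnegative parameter by a recursion in $b$ that lowers it by a fixed amount and costs only polynomial factors in $w$. From Definition~\ref{Ibt} write $\mathscr I_{b,\nu}(w,t)=\sum_m c_{b,m}(w)\widetilde C^{(\nu)}_m(t)$ with
\[
c_{b,m}(w)=\frac{\Gamma(b+\nu+1)}{\Gamma(b+\nu+m+1)}\left(\frac w2\right)^m\mathcal I_{b+\nu+m}(w).
\]
The recursion comes from the three-term relation for the normalized modified Bessel functions,
\[
\mathcal I_{\mu-1}(w)=\mathcal I_{\mu}(w)+\frac{w^2}{4\mu(\mu+1)}\mathcal I_{\mu+1}(w),
\]
which follows directly from the series definition of $\mathcal I_\mu$.

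Apply this with $\mu=b+\nu+m+1$. Each coefficient $c_{b,m}$ becomes a combination of $c_{b+1,m}$ and $c_{b+2,m}$, with prefactors that are rational in $m$ and bounded uniformly in $m$, times powers $w^0$ or $w^2$. The normalizations must be matched here: the ratio $\Gamma(b+\nu+1)/\Gamma(b+\nu+m+1)$ against $\Gamma(b+\nu+2)/\Gamma(b+\nu+m+2)$ produces the factor $(b+\nu+m+1)/(b+\nu+1)$. This factor is \emph{not} bounded in $m$, so the naive recursion fails.

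The fix is to absorb the growing factor into the Gegenbauer side. Use the identity
\[
(m+\nu)\widetilde C^{(\nu)}_m \text{ expressed via } t\,\widetilde C^{(\nu+1)}_{m-1}-\widetilde C^{(\nu+1)}_{m-2}\text{-type relations},
\]
or, more concretely, the derivative identity $\frac{d}{dt}\widetilde C^{(\nu)}_m=c\,(m+\nu)\cdot(\text{lower } \widetilde C^{(\nu+1)})$. Together with the multiplication by $w$ present in $(w/2)^m$, this expresses $\mathscr I_{b,\nu}$ as a finite combination of terms of the form $w^{j}\,\partial_t^{k}\mathscr I_{b+1,\nu'}(w,t)$ or $w^j\mathscr I_{b+1,\nu'}$ with $\nu'\in\{\nu-1,\nu,\nu+1\}$.

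An alternative I would try first, because it avoids Gegenbauer bookkeeping, is to work directly from Proposition~\ref{summation}. The integral representation there is valid for $b>0$, and both sides are analytic in $b$ where the series converges. Integrating by parts in $u$, using $u^{b-1}=\frac{1}{b}\frac{d}{du}u^{b}$, moves the singular factor to $u^{b}$. The derivative then falls on $(1-u)^\nu\mathcal I_b(uw)e^{(1-u)wt}$ and produces factors $w$, $wt$, $\mathcal I_b'(uw)\propto uw\,\mathcal I_{b+1}(uw)$ and $(1-u)^{\nu-1}$. Iterating $k$ times with $b+k>0$ gives a representation valid for $b>-k$ by analytic continuation in $b$. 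It has the form of finitely many integrals $\int_0^1 u^{b+k-1}(1-u)^{\nu-j}\,P(w,t,u)\,\mathcal I_{b+i}(uw)e^{(1-u)wt}\,du$, with $P$ polynomial of degree at most $k$ in $w$.

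Each such integral is bounded, via $|\mathcal I_\beta(z)|\le e^{|\mathrm{Re} z|}$ for $\beta>-1/2$ and $|e^{(1-u)wt}|\le e^{(1-u)|\mathrm{Re} w|}$, by $C(1+|w|)^{k}e^{|\mathrm{Re} w|}$. This works provided the exponents are integrable, i.e.\ $\nu-j>-1$. Here lies the main obstacle. Lowering $b$ below $-1/2$ is forced, since $b>-\nu-1$ permits very negative $b$ when $\nu$ is large, but each integration by parts also lowers the power of $(1-u)$. To control this I would perform the reduction at the series level in $\nu$: raise $b$ while lowering $\nu$ only so far as $b+\nu>-1$ is preserved, which is exactly the hypothesis. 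Boundary terms at $u=1$ must be checked to vanish or to be harmless, and they are if one stops before $\nu-j\le -1$. If this bookkeeping fails, a fallback is to split the defining series: handle finitely many terms $m\le m_0$ directly, using the entire-function bound $|\mathcal I_\mu(w)|\le C(1+|w|)^{M}e^{|\mathrm{Re} w|}$ for any $\mu>-1$ (from the asymptotics $\mathcal I_\mu(w)\sim c\,w^{-\mu-1/2}e^{\pm w}$). For $m>m_0$, where $b+\nu+m>-1/2$, compare termwise with the $b=0$ series, where the ratio of gamma factors is polynomially bounded in $m$.
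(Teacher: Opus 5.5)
Your concrete route, integrating by parts with $u^{b-1}=\frac1b\frac{d}{du}u^b$ on all of $[0,1]$, breaks down at the endpoint $u=1$, and the bookkeeping you propose does not repair it. Each step differentiates a factor $(1-u)^{\nu-j}$. The step is legitimate only if the boundary term $\frac{1}{b+j}u^{b+j}(1-u)^{\nu-j}(\cdots)\big|_{u=1}$ is finite, i.e.\ $\nu-j\ge 0$, and not merely $\nu-j>-1$ as you state. Unless $\nu\in\mathbb Z_{\ge0}$ (where $(1-u)^\nu$ is a polynomial and the problem does not arise), at most $\lfloor\nu\rfloor+1$ steps are admissible. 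You need more than $-b$ steps, so the range $-\nu-1<b\le-\lfloor\nu\rfloor-1$ is never reached. For $N=1$ ($\nu=-\frac12$) the very first step already produces the infinite boundary term $\frac1b u^b(1-u)^{-1/2}(\cdots)\big|_{u=1}$, so no $b\in(-\frac12,0)$ is covered, and this is the paper's Dunkl case. For $N=3$ ($\nu=\frac12$) the second step fails, so $b\in(-\frac32,-1]$ is missed. ``Raising $b$ while lowering $\nu$ at the series level'' is not an argument. The hypothesis $b+\nu>-1$ governs $\Gamma(b+\nu+1)$ and the Bessel orders $b+\nu+m$; it has nothing to do with the endpoint $u=1$.

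The missing idea is that the analytic continuation in $b$ concerns only the endpoint $u=0$, so $(1-u)^\nu$ must be left untouched. The paper Taylor-expands only $F_b(u)=\widetilde I_b(uw)e^{(1-u)wt}$ at $u=0$, to an order $m$ with $b+m>0$. The Taylor polynomial integrates against $u^{b-1}(1-u)^\nu$ to Beta functions $B(b+k,\nu+1)$. This gives coefficients $\frac{b\,\Gamma(b+k)}{\Gamma(b+r+1)\,(b+\nu+1)_k}$, which are holomorphic on $b>-\nu-1$. The remainder carries $u^{b+m-1}$, so both pieces continue analytically with the required bounds. A cutoff near $u=0$ before integrating by parts would serve equally well.

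You also need two ingredients you do not supply. The first is the normalization $\widetilde I_b=\mathcal I_b/\Gamma(b+1)$, so that $\mathcal I_b/B(b,\nu+1)=\frac{b\,\Gamma(b+\nu+1)}{\Gamma(\nu+1)}\widetilde I_b$ is holomorphic across $b=0,-1,-2,\dots$. Note that $\mathcal I_b$ itself has poles at the negative integers, and these lie in the range once $\nu>0$. The second is a bound $|\widetilde I_\beta(z)|\le C_\beta(1+|z|)^{M_\beta}e^{|\mathrm{Re}\,z|}$ for \emph{every} real $\beta$, since the continued formula still contains $\widetilde I_b$ with $b\le-\frac12$. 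The paper obtains this by iterating $\widetilde I_b(z)-\widetilde I_b(0)=\frac{z^2}{2}\int_0^1\zeta\,\widetilde I_{b+1}(\zeta z)\,d\zeta$.

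Neither fallback closes the gap. The Bessel--Gegenbauer recursion is only gestured at, and it would leave you needing bounds on $\partial_t^k\mathscr I_{b+1,\nu'}$, which nothing available controls.

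The series-splitting fallback fails as stated, for two reasons. First, the $b$-series and the $b=0$ series involve Bessel functions of different orders, so a gamma-ratio comparison is not a termwise comparison. Second, and more seriously, termwise absolute bounds destroy the cancellation that makes the estimate polynomial. For $w=iy$ the target is $C(1+|y|)^M$. But bounding $|\mathcal I_{\mu+m}(w)|\le e^{|\mathrm{Re}\,w|}$ termwise and summing $\frac{|w/2|^m}{\Gamma(\mu+m+1)}|\widetilde C^{(\nu)}_m(t)|$ over $m$ yields only a bound of order $e^{|w|/2}$. To rescue this route you would need, for $|w|\ge1$, a bound of size $C|w|^{-\mu}(1+e^{|\mathrm{Re}\,w|})$ on $\frac{(w/2)^m}{\Gamma(\mu+m+1)}\mathcal I_{\mu+m}(w)$ that is uniform in $m$. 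You would also need super-exponential decay in $m$ for $m\gg|w|$, obtainable for instance from Schl\"afli's integral. That is a separate ingredient absent from your sketch.
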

	
	\begin{proof}
		The proof is somewhat technical and is therefore deferred to
		Appendix~\ref{4.1}. 
		
		The argument starts from the integral formula for
		$\mathscr I_{b,\nu}(w,t)$ when $b>0$, and expands the integrand at $u=0$
		into a finite Taylor part and a remainder. This yields a decomposition
		formula for $\mathscr I_{b,\nu}(w,t)$ into finitely many explicit terms
		and a remainder term. The resulting identity is then extended to the full
		range $b>-\nu-1$ by analytic continuation in $b$, and the desired estimate
		follows by bounding the explicit terms and the remainder separately.
	\end{proof}
	\vspace{12pt}

	\begin{rmk}
		We record several explicit formulas for $\mathscr{I}_{b,\nu}(w,t)$ without proof.
		
		For $b>0, \nu>-1, w\in\mathbb C$, and $t\in[-1,1]$,
		\begin{align*}
			\MoveEqLeft \mathscr{I}_{b,\nu}(w,t) \\
			&=
			\sum_{m=0}^{\infty}
			\frac{\Gamma(b+\nu+1)}{\Gamma(b+\nu+m+1)}
			\left(\frac{w}{2}\right)^m
			\mathcal{I}_{b+\nu+m}(w)\,
			\widetilde{C}^{(\nu)}_{m}(t) \\
			&=
			\frac{1}{B(b,\nu+1)}
			\int_0^1
			u^{b-1}(1-u)^\nu
			\mathcal{I}_b(uw)\,e^{(1-u)wt}\,du \\
			&=
			\sum_{m,n=0}^{\infty}
			\frac{(\nu+1)_{n}(b)_{2m}}{(b+\nu+1)_{2m+n}(b+1)_{m}}
			\frac{(w/2)^{2m}(wt)^{n}}{m!\,n!} \\
			&=
			\frac{1}{B(b,\nu+1)B(b+\frac{1}{2},\frac{1}{2})}
			\int_{(u,s)\in [0,1]\times [-1,1]}
			(1-u)^{\nu}u^{b-1}(1-s^{2})^{b-\frac{1}{2}}
			e^{(1-u)wt+suw}\,ds\,du \\
			&=
			e^{wt}
			F_{0,1,1}\!\left(
			\begin{smallmatrix}
				b+\frac{1}{2},\,b\\
				2b+1,\,b+\nu+1
			\end{smallmatrix}
			;\,
			\begin{smallmatrix}
				w-wt\\
				-2w
			\end{smallmatrix}
			\right),
		\end{align*}
		
		where
		$F_{0,1,1}\!\left(
		\begin{smallmatrix}
			\alpha,\,\beta\\
			\gamma,\,\delta
		\end{smallmatrix}
		;\,
		\begin{smallmatrix}
			X\\
			Y
		\end{smallmatrix}
		\right)
		:=
		\sum_{m,n=0}^{\infty}
		\frac{(\alpha)_n(\beta)_{m+n}}{(\gamma)_n(\delta)_{m+n}}
		\frac{X^m Y^n}{m!\,n!}$.
	\end{rmk}
	
	\vspace{12pt}
	
	\subsection{Integral kernels for $\Omega_b$}\label{2.7}
	
	In this subsection, we describe the action of $\Omega_b$ by integral kernels; see Theorem~\ref{ker}. 
	
	We first introduce the kernel $\Lambda_b(x,y;t)$ associated with the semigroup generated by $\frac12(H_b-|x|^2)$, which might be viewed as a generalized Mehler-type kernel. From this, we obtain in particular an explicit integral kernel $B_{b}(x,y)$ for the generalized Fourier transform $\mathcal{F}_b$. 
	
	We also define the kernel $h_b(x,y;t)$ of the semigroup $e^{\frac t2 H_b}$, which might be regarded as a heat-kernel-type expression associated with $H_b$. These kernels give explicit realizations of the action of several distinguished elements of the representation $\Omega_b$; see Remark~\ref{arbitrary}.
	
	\vspace{18pt}
	
	Let $\mathbb{C}^+ := \{ z \in \mathbb{C} \mid \mathrm{Re} (z) \ge 0 \}$ and $c_{b,N}:=\frac{1}{(2\pi)^{b+N/2}} \frac{\Gamma(N/2)\pi^{b}}{\Gamma(b+N/2)}$.\\
	By Schwartz's kernel theorem, there exists a distribution kernel $\Lambda_{b}(x,y;t)$ satisfying the following:\\
	 For $t \in \mathbb{C}^{+}$, and $f(x)\in\mathcal{S}(\mathbb{R}^{N})$,
	 \[\Omega_{b}(e^{ti\left(\begin{smallmatrix}
			0 & 1 \\
			-1 & 0
		\end{smallmatrix}\right)})f(x)= e^{\frac{t}{2}(H_{b}-|x|^2)}f(x)= c_{b,N}\int_{\mathbb{R}^{N}}\Lambda_{b}(x,y;t)f(y) \,|y|^{2b}dy.\] 
		We denote $B_{b}(x,y):=i^{b+N/2}\Lambda_{b}(x,y;\frac{\pi i}{2})$. Then, for $f(x)\in\mathcal{S}(\mathbb{R}^{N})$,
		\[\mathcal{F}_{b}f(x) = c_{b,N}\int_{\mathbb{R}^{N}} B_{b}(x,y)f(y) \,|y|^{2b}dy .\] 
		We also define the distribution kernel $h_{b}(x,y; t)$ satisfying the following: \\
		For $t \in \mathbb{C}^{+}$, and $f(x)\in\mathcal{S}(\mathbb{R}^{N})$,
		 \[\Omega_{b}(e^{ti\left(\begin{smallmatrix}
			0 & 0 \\
			-1 & 0
		\end{smallmatrix}\right)})f(x)=e^{\frac{t}{2}H_{b}}f(x)=c_{b,N}\int_{\mathbb{R}^{N}}h_{b}(x,y; t)f(y)\,|y|^{2b}dy .\] 
	
		\vspace{18pt}
		
		The following theorem gives explicit formulas for these kernels in terms of the function $\mathscr I_{b,\nu}$ introduced in Subsection~\ref{2.6}.
		
		\begin{thm}[Explicit formulas for the integral kernels]\label{ker}
			The kernels are given by
			\begin{align*}
				\Lambda_b(x,y;t)
				&=
				\frac{1}{\sinh(t)^{b+N/2}}
				e^{-\frac{|x|^2+|y|^2}{2\tanh(t)}}
				\mathscr I_{b,\frac{N-2}{2}}\!\left(\frac{|x||y|}{\sinh(t)},\frac{\langle x,y\rangle}{|x||y|}\right),
				&& t\in \mathbb C^+\setminus 2\pi i\mathbb Z,\\[6pt]
				h_b(x,y;t)
				&=
				\frac{1}{t^{b+N/2}}
				e^{-\frac{|x|^2+|y|^2}{2t}}
				\mathscr I_{b,\frac{N-2}{2}}\!\left(\frac{|x||y|}{t},\frac{\langle x,y\rangle}{|x||y|}\right),
				&& t\in \mathbb C^+,\\[6pt]
				B_b(x,y)
				&=
				\mathscr I_{b,\frac{N-2}{2}}\!\left(-i|x||y|,\frac{\langle x,y\rangle}{|x||y|}\right).
			\end{align*}
			Here $\mathscr{I}_{b,\nu}$ is as in Definition~\ref{Ibt}; see Proposition~\ref{summation} for its explicit formula.
		\end{thm}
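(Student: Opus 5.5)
The plan is to compute $\Lambda_b$ by spectral expansion in the basis of Proposition~\ref{orth}, and then obtain $h_b$ and $B_b$ from it by specialization. By Proposition~\ref{act1} (equivalently Proposition~\ref{har}), $e^{\frac t2(H_b-|x|^2)}\Phi_{b,\ell,j}=e^{-t(b+\lambda_{N,m_j}+2\ell+1)}\Phi_{b,\ell,j}$. Normalizing with Proposition~\ref{orth}(1), the kernel is therefore
\[
c_{b,N}\Lambda_b(x,y;t)=\sum_{\ell,j}\frac{2\,\ell!}{\Gamma(\ell+b+\lambda_{N,m_j}+1)}\,e^{-t(b+\lambda_{N,m_j}+2\ell+1)}\,\Phi_{b,\ell,j}(x)\overline{\Phi_{b,\ell,j}(y)},
\]
with convergence in $L^2\otimes L^2$ for $\mathrm{Re}\,t>0$ (Hilbert--Schmidt, Proposition~\ref{hol}(3)).

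The sum over $j$ with fixed $m$ uses the addition formula $\sum_{\deg p_j=m}p_j(x)\overline{p_j(y)}=\frac{1}{\mathrm{vol}(S^{N-1})}|x|^m|y|^m\widetilde C^{(\nu)}_m(\langle x,y\rangle/|x||y|)$ with $\nu=\frac{N-2}{2}$; the normalization of $\widetilde C^{(\nu)}_m$ is chosen exactly so that this holds. The sum over $\ell$ uses the Hille--Hardy formula
\[
\sum_\ell \frac{\ell!\,L^{(\alpha)}_\ell(r^2)L^{(\alpha)}_\ell(s^2)}{\Gamma(\ell+\alpha+1)}z^\ell=\frac{(rs\sqrt z)^{-\alpha}}{1-z}e^{-z\frac{r^2+s^2}{1-z}}I_\alpha\!\left(\frac{2rs\sqrt z}{1-z}\right),
\]
applied with $\alpha=b+\nu+m$ and $z=e^{-2t}$. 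Rewriting in terms of $\mathcal I_\alpha$ and simplifying, one uses $\frac{2\sqrt z}{1-z}=\frac1{\sinh t}$ and $\frac{1}{2}+\frac{z}{1-z}=\frac{1}{2\tanh t}$. Each $m$-term then becomes
\[
\frac{e^{-t(b+\nu+1)}}{1-e^{-2t}}\left(\frac{rs}{2\sinh t}\right)^m\frac{1}{\Gamma(b+\nu+m+1)}\,\mathcal I_{b+\nu+m}\!\left(\frac{rs}{\sinh t}\right)e^{-\frac{r^2+s^2}{2\tanh t}},
\]
where the powers $e^{-tm}$ cancel against $z^{-\alpha/2}$. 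The prefactor collapses to $(2\sinh t)^{-(b+\nu+1)}$, and matching constants against $c_{b,N}$ (using $\mathrm{vol}(S^{N-1})=2\pi^{N/2}/\Gamma(N/2)$) yields the stated $\Lambda_b$ with the series of Definition~\ref{Ibt}. This identity first holds for real $t>0$ and extends to $\mathrm{Re}\,t>0$ by holomorphy (Proposition~\ref{hol}(2)).

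To reach the boundary $\mathrm{Re}\,t=0$, in particular $t=\pi i/2$ where $\sinh t=i$ and $\tanh t=\infty$, I would pair the kernel against $f,g\in\mathcal S(\mathbb R^N)$ and pass to the limit. Strong continuity of $\Omega_b$ on the semigroup gives the operator side. On the kernel side, Proposition~\ref{bound2} gives $|\mathscr I_{b,\nu}(w,\cdot)|\le C(1+|w|)^Me^{|\mathrm{Re}\,w|}$, and for $w=rs/\sinh t$ the factor $e^{|\mathrm{Re}\,w|}$ is dominated by the Gaussian $e^{-\mathrm{Re}(1/\tanh t)(r^2+s^2)/2}$. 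This gives dominated convergence with polynomial growth, which Schwartz functions absorb. At $t=\pi i/2$ this yields $B_b(x,y)=i^{b+N/2}\Lambda_b=\mathscr I_{b,\nu}(-i|x||y|,\cdot)$. Some care is needed with the branch of $\sinh(t)^{b+N/2}$, which is fixed by continuity from $t>0$ and must be consistent with the $\widetilde{SL}$ lift.

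For $h_b$, I would write $e^{ti\left(\begin{smallmatrix}0&0\\-1&0\end{smallmatrix}\right)}$ as a product of a dilation and a semigroup element, since the matrix $\mathrm{Exp}$ in $SL(2,\mathbb C)$ factors this way. Alternatively, one can rescale: conjugating by the dilation $\delta_s f(x)=s^{(N+2b)/2}f(sx)$ sends $|x|^2\mapsto s^{-2}|x|^2$ and $H_b\mapsto s^2H_b$. Hence $e^{\frac t2H_b}=\lim_{s\to0}$ of conjugated $e^{\frac{\tau}{2}(H_b-|x|^2)}$ with $\tau=ts^2$, and in the limit $\sinh\tau\sim\tanh\tau\sim\tau$ gives the formula.

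The main obstacle is the boundary passage to $\mathrm{Re}\,t=0$, specifically justifying the limit with only the polynomial-exponential bound of Proposition~\ref{bound2} and tracking the phase $i^{b+N/2}$ correctly. The spectral summation itself is routine.
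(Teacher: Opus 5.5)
Your proposal is correct and follows the paper's proof essentially step for step. The steps are: spectral expansion in the $\Phi_{b,\ell,j}$ with the Hille--Hardy and addition formulas for $\mathrm{Re}\,t>0$; passage to $\mathrm{Re}\,t=0$ by strong continuity of $\Omega_b$ on the Olshanski semigroup plus dominated convergence via Proposition~\ref{bound2}; and $h_b$ as the dilation-conjugated limit $\tau=ts^2\to0$ of $\Lambda_b$. Your observation that $|e^{-\frac{|x|^2+|y|^2}{2\tanh t}}|\,e^{|\mathrm{Re}\,w|}\le 1$ for $w=|x||y|/\sinh t$ is exactly what makes the dominated-convergence step work.

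There are two harmless slips. The displayed $m$-term prefactor should read $e^{-t(b+\nu+1)}(1-e^{-2t})^{-(b+\nu+1)}$. With $\tau=ts^2$, $s\to0$, you need the conjugation sending $H_b\mapsto s^{-2}H_b$, $|x|^2\mapsto s^2|x|^2$; the one you wrote gives $e^{-\frac t2|x|^2}$ in the limit instead of $e^{\frac t2 H_b}$.
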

	
	\vspace{6pt}
	\begin{proof}
		\begin{enumerate}
			\item (Computation of $\Lambda_b(x,y;t)$) 
			
			Fix $t \in \mathbb{C}^{+}\setminus i\mathbb{R}$. Since 
			$\Omega_{b}(e^{ti\left(\begin{smallmatrix}
					0 & 1 \\
					-1 & 0
				\end{smallmatrix}\right)})$ is a Hilbert-Schmidt operator by Proposition~\ref{hol}, Proposition~\ref{har}  gives the expansion
			\[c_{b,N}\Lambda_{b}(x,y;t) = \sum_{\ell,j=0}^{\infty} e^{-t(b+\lambda_{N,m_{j}}+2\ell+1)}  \frac{\Phi_{b,\ell,j}(x)\overline{\Phi_{b,\ell,j}(y)}}{\left\|\Phi_{b,\ell,j}\right\|^{2}_{L^{2}}}\] 
			with convergence in $L^2(\mathbb{R}^{N}\times \mathbb{R}^{N}, |x|^{2b}|y|^{2b}dxdy)$. 
			
			By the Hille-Hardy formula 
			\[ \sum_{\ell=0}^{\infty} \frac{\Gamma(\nu+1)\Gamma(\ell+1)}{\Gamma(\nu+\ell+1)}L^{(\nu)}_{\ell}(X)L^{(\nu)}_{\ell}(Y)T^{\ell} = (1-T)^{-(\nu+1)}e^{-\frac{(X+Y)T}{1-T}}\mathcal{I}_{\nu}\left(\frac{2\sqrt{XYT}}{1-T}\right)\] and the addition formula for the zonal spherical harmonics \[\sum_{m_{j}=m}p_{j}(\omega)\overline{p_{j}(\mu)} = \frac{1}{\mathrm{vol}(S^{N-1})}\widetilde{C}_{m}^{(\frac{N-2}{2})}(\langle\omega,\mu\rangle),\]
			
			one obtains
			
			\[\Lambda_{b}(x,y; t) =  \frac{1}{\left(\sinh(t)\right)^{b+N/2}}e^{-\frac{|x|^2+|y|^2}{2\tanh(t)}}\mathscr{I}_{b,\frac{N-2}{2}}\left(\frac{|x||y|}{\sinh(t)},\frac{\langle x,y\rangle}{|x||y|}\right).\]
			
			Here 
			$\mathcal{I}_{\nu}(w)$
			denotes a normalized modified Bessel function, and
			$\widetilde{C}_{m}^{(\nu)}(t)$
			denotes the normalized Gegenbauer polynomial; see Definition~\ref{Ibt}.
			
			\vspace{12pt}
			Suppose $it\in i\mathbb{R}\setminus 2\pi i\mathbb{Z}$. 
			Since the representation $\Omega_{b}$ is continuous as in Proposition~\ref{hol}, 
			for any $f\in L^{2}(\mathbb{R}^{N},|x|^{2b}dx),\quad\Omega_{b}(e^{t \left(\begin{smallmatrix}
					0 & -1 \\
					1 & 0
				\end{smallmatrix}\right)})f = \lim_{\varepsilon\to + 0} \Omega_{b}(e^{(t+i\varepsilon) \left(\begin{smallmatrix}
					0 & -1 \\
					1 & 0
				\end{smallmatrix}\right)})f$.

			By Proposition~\ref{bound2}, there exists a constant $C>0$ such that
			\begin{align*}
				\left|\Lambda_{b}(x,y ; t)\right| \,\le \,C\,\frac{1}{\left|\sinh(t)\right|^{b+N/2}}\left(1+\left(\frac{|x||y|}{\sinh(t)}\right)^{N/2}\right).
			\end{align*}
			Thus, applying Lebesgue's dominated convergence theorem to $\Omega_{b}(e^{t \left(\begin{smallmatrix}
					0 & -1 \\
					1 & 0
				\end{smallmatrix}\right)})f $ when $f\in \mathcal{S}(\mathbb{R}^{N})$, we obtain $\lim_{\varepsilon\to 0} \Lambda_{b}(x,y; it+\varepsilon)=\Lambda_{b}(x,y; it)$.
			
			\item (Computation of $B_b(x,y)$) 
				This follows immediately from the definition 
				\[B_b(x,y):=i^{b+N/2}\Lambda_b\left(x,y;\frac{\pi i}{2}\right)\] 
				and the formula for $\Lambda_b(x,y;t)$ proved in 1.
			\item (Computation of $h_b(x,y;t)$) 
			
			Fix $t\in\mathbb{C}^{+}$. $\begin{pmatrix}
				1 & 0\\t&1 \\
			\end{pmatrix} = \lim_{\varepsilon\to + 0} \begin{pmatrix} \varepsilon^{1/2}&0\\0&\varepsilon^{-1/2}\end{pmatrix}\begin{pmatrix}\cosh(\varepsilon t)& \sinh(\varepsilon t)\\\sinh(\varepsilon t) & \cosh(\varepsilon t)\end{pmatrix}\begin{pmatrix}\varepsilon^{-1/2}& 0\\0&\varepsilon^{1/2}\end{pmatrix}$. 
			
			Since $\Omega_{b}( \begin{pmatrix} \varepsilon&0\\0&\varepsilon^{-1}\end{pmatrix})f(x)=\varepsilon^{b+\frac{N}{2}}f(\varepsilon x)$, and continuity of the representation as in Proposition~\ref{hol},
			\begin{align*}
				\MoveEqLeft\Omega_{b}(\begin{pmatrix}
					1&0\\t&1
				\end{pmatrix})f(x) = \lim_{\varepsilon\to + 0} c_{b,N}\int_{\mathbb{R}^{N}} \Lambda_{b}(\varepsilon^{1/2}x,y;\varepsilon t)f(\varepsilon^{-1/2} y) |y|^{2b}dy &\\
				& = \lim_{\varepsilon\to + 0} c_{b,N}\int_{\mathbb{R}^{N}} \varepsilon^{b+N/2}\Lambda_{b}(\varepsilon^{1/2}x,\varepsilon^{1/2}y; \varepsilon t)f(y) |y|^{2b}dy. 
			\end{align*} 
			By Proposition~\ref{bound2}, there exists a constant $C>0$ such that
			\begin{align*}
				\left|\varepsilon^{b+N/2}\Lambda_{b}(\varepsilon^{1/2}x,\varepsilon^{1/2}y; \varepsilon t)\right| \le C\frac{\varepsilon^{b+N/2}}{\left|\sinh(\varepsilon t)\right|^{b+N/2}}\left(1+\left(\frac{\varepsilon|x||y|}{\sinh(\varepsilon t)}\right)^{N/2}\right).
			\end{align*}
			Applying Lebesgue's dominated convergence theorem when $f(x)\in \mathcal{S}(\mathbb{R}^{N})$, we obtain the claim. 
			
		\end{enumerate}
		
	\end{proof}
	
	\vspace{12pt}
	
	\begin{rmk}[The case of $N=1$]\label{N1}
		We consider the case when $N=1$ of Theorem~\ref{ker}. Although only the terms $m=0,1$ contribute, the Theorem remains valid:
		
		If $m\ge2$,  $C_{m}^{(-\frac{1}{2})}(\pm1)=0$. This follows from the generating function formula of the Gegenbauer polynomials $(1-2tx+x^2)^{-\nu} = \sum_{m=0}^{\infty}C_{m}^{(\nu)}(t)x^{m}$. 
		By this, when $N=1$, 
		\begin{align*}
			\MoveEqLeft \mathscr{I}_{b,\frac{N-2}{2}}\left(\frac{|x||y|}{\mathrm{sinh}(t)},\frac{\langle x,y\rangle}{|x||y|}\right) &\\
			& =\sum_{m=0}^{\infty}\frac{\Gamma(b+\frac{N}{2})}{\Gamma(b+\frac{N}{2}+m)}\left(\frac{|x||y|}{2\mathrm{sinh}(t)}\right)^{m}\mathcal{I}_{b+\lambda_{N,m}}\left(\frac{|x||y|}{\mathrm{sinh}(t)}\right)\widetilde{C}^{(\frac{N-2}{2})}_{m}\left(\frac{\langle x,y\rangle}{|x||y|}\right)\\
			& =\sum_{m=0}^{1}\frac{\Gamma(b+\frac{1}{2})}{\Gamma(b+\frac{1}{2}+m)}\left(\frac{|x||y|}{2\mathrm{sinh}(t)}\right)^{m}\mathcal{I}_{b-\frac{1}{2}+m}\left(\frac{|x||y|}{\mathrm{sinh}(t)}\right)\widetilde{C}^{(-\frac{1}{2})}_{m}\left(\frac{\langle x,y\rangle}{|x||y|}\right).
		\end{align*}
		
		Thus the calculation in Theorem~\ref{ker} is correct when $N=1$. 
		
		Each integral kernel is given by
		\begin{align*}
			\Lambda_{b}(x,y;t) = \Gamma(b+1/2)\,\mathrm{sinh}(t)^{-\left(\frac{1}{2}+b\right)} e^{-\frac{x^2+y^2}{2\mathrm{tanh}(t)}}\left(\widetilde{I}_{b-\frac{1}{2}}\left(\frac{xy}{\mathrm{sinh}(t)}\right) + \frac{xy}{2\mathrm{sinh}(t)} \,\widetilde{I}_{b+\frac{1}{2}}\left(\frac{xy}{\mathrm{sinh}(t)}\right)\right) 
		\end{align*}
		\begin{align*}
			\MoveEqLeft h_{b}(x,y;t) = \Gamma(b+1/2)\,t^{-\left(\frac{1}{2}+b\right)} e^{-\frac{x^2+y^2}{2t}}\left(\widetilde{I}_{b-\frac{1}{2}}\left(\frac{xy}{t}\right) + \frac{xy}{2t} \,\widetilde{I}_{b+\frac{1}{2}}\left(\frac{xy}{t}\right)\right) 
		\end{align*}
		\begin{align*}
			\MoveEqLeft  B_{b}(x,y) = \Gamma(b+1/2)\, \left(\widetilde{J}_{b-\frac{1}{2}}\left(xy\right) -i\frac{xy}{2} \,\widetilde{J}_{b+\frac{1}{2}}\left(xy\right)\right),
		\end{align*}
		
		where $\widetilde{I}_{\nu}(x):= \sum_{m=0}^{\infty}\frac{(x/2)^{2m}}{\Gamma(\nu+m+1)m!}$, $\widetilde{J}_{\nu}(x):= \sum_{m=0}^{\infty}\frac{(-1)^{m}(x/2)^{2m}}{\Gamma(\nu+m+1)m!}$ denote normalized Bessel functions.
		
	\end{rmk}
	
	\vspace{12pt}

	\begin{rmk}[Action of $\Omega_b$ for arbitrary elements of $\widetilde{\Gamma}(W)$]\label{arbitrary}
		The action of $\widetilde{\Gamma}(W)$ can be computed explicitly
		from Theorem~\ref{ker} by elementary computations. 
		
		Let $\widetilde{G}=\widetilde{SL}(2,\mathbb{R})$ and $\widetilde{K}=\widetilde{SO}(2)$.
		Set
		\[
		\begin{aligned}
			&M=\left\{\exp\!\left(m\pi\begin{psmallmatrix}0&-1\\1&0\end{psmallmatrix}\right)\right\},\quad
			A=\left\{\exp\!\left(\begin{psmallmatrix}a&0\\0&-a\end{psmallmatrix}\right)\right\},\quad
			N=\left\{\exp\!\left(\begin{psmallmatrix}0&b\\0&0\end{psmallmatrix}\right)\right\},\\
			&\widetilde{K}_{\mathbb{C}^+}
			=\left\{\exp\!\left(it\begin{psmallmatrix}0&1\\-1&0\end{psmallmatrix}\right)\right\},\quad
			N_{\mathbb{C}^+}
			=\left\{\exp\!\left(\begin{psmallmatrix}0&ib\\0&0\end{psmallmatrix}\right)\right\},\quad
			\overline{N_{\mathbb{C}^+}}
			=\left\{\exp\!\left(\begin{psmallmatrix}0&0\\-ib&0\end{psmallmatrix}\right)\right\},
		\end{aligned}
		\]
		and put $P=MAN$.
		
		Recall that
		$\widetilde{\Gamma}(W) =
		\widetilde{G}
		\cup
		\widetilde{G}\,\mathrm{Exp}\!\left(i\,\mathbb{R}_{>0}
		\begin{psmallmatrix}0&1\\-1&0\end{psmallmatrix}\right)\widetilde{G}
		\cup
		\widetilde{G}\,\mathrm{Exp}\!\left(i
		\begin{psmallmatrix}0&1\\0&0\end{psmallmatrix}\right)\widetilde{G}$.
		Using the Iwasawa decomposition $\widetilde{G}=\widetilde{K}AN$
		and the Bruhat decomposition $\widetilde{G}=P\cup Pw_0P$,
		we obtain the decomposition :
		\[
		\widetilde{\Gamma}(W)
		=
		\widetilde{K}AN
		\cup
		(AN)\,\widetilde{K}_{\mathbb{C}^+}\,(AN)
		\cup
		\bigl(
		P N_{\mathbb{C}^+} P
		\cup
		P w_0 N_{\mathbb{C}^+} P
		\cup
		P N_{\mathbb{C}^+} w_0 P
		\cup
		P \overline{N_{\mathbb{C}^+}} P
		\bigr),
		\]
		where $w_0=\exp\!\left(\frac{\pi}{2}\begin{psmallmatrix}0&-1\\1&0\end{psmallmatrix}\right)$.
		
		By this, the actions of $\Omega_{b}$ reduce to that of
		$\widetilde{K}_{\mathbb{C}^+}$, $\overline{N_{\mathbb{C}^+}}$, and $w_0$,
		which are described by Theorem~\ref{ker},
		up to the elementary actions of $M$, $A$, and $N_{\mathbb{C}^+}$:
		\begin{align*}
			\Omega_b\!\left(e^{m\pi\begin{psmallmatrix}0&-1\\1&0\end{psmallmatrix}}\right)f(x)
			&= e^{-m\pi i(b+N/2)} f((-1)^m x),\\
			\Omega_b\!\left(e^{\begin{psmallmatrix}a&0\\0&-a\end{psmallmatrix}}\right)f(x)
			&= e^{a(b+N/2)} f(e^a x),\\
			\Omega_b\!\left(e^{\begin{psmallmatrix}0&ib\\0&0\end{psmallmatrix}}\right)f(x)
			&= e^{-\frac{b|x|^2}{2}} f(x).
		\end{align*}
	\end{rmk}
	\vspace{12pt}

	\begin{cor}[Bound for the integral kernels]
		Suppose $b \ge0$. 
		\begin{enumerate}
			\item 
			$$ \left|B_{b}(x,y)\right| \hspace{2pt}\le\hspace{2pt} 1.$$
			
			\item For $t\hspace{1pt}>\hspace{1pt}0$,
			$$ 0\hspace{3pt}\le\hspace{3pt} h_{b}(x,y;t) \hspace{3pt}\le\hspace{3pt} t^{-(b+N/2)}e^{-\frac{||x|-|y||^{2}}{2t}}.$$
			
		\end{enumerate}
	\end{cor}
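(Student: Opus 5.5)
Both bounds follow from the explicit formulas of Theorem~\ref{ker} together with Proposition~\ref{bound} (valid for $b\ge0$) and Proposition~\ref{summation} (valid for $b>0$), with $\nu=\frac{N-2}{2}>-1$. The case $b=0$ is handled separately via $\mathscr I_{0,\nu}(w,t)=e^{wt}$.

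\emph{Bound for $B_b$.} By Theorem~\ref{ker}, $B_b(x,y)=\mathscr I_{b,\frac{N-2}{2}}(w,t)$ with $w=-i|x||y|$ and $t=\langle x,y\rangle/(|x||y|)\in[-1,1]$ (Cauchy--Schwarz). Since $\mathrm{Re}(w)=0$, Proposition~\ref{bound} gives $|B_b(x,y)|\le e^{0}=1$ directly. At $x=0$ or $y=0$ the argument $t$ is undefined, but $w=0$ there and only the $m=0$ term of the series survives, giving $B_b=1$. For $b=0$ we have $B_0=e^{-i\langle x,y\rangle}$, so the bound is immediate.

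\emph{Bound for $h_b$.} Fix $t>0$ and write $w=|x||y|/t\ge0$. By Theorem~\ref{ker},
\[
h_b(x,y;t)=t^{-(b+N/2)}e^{-\frac{|x|^2+|y|^2}{2t}}\,\mathscr I_{b,\frac{N-2}{2}}\!\left(w,\tfrac{\langle x,y\rangle}{|x||y|}\right).
\]
The upper bound follows from Proposition~\ref{bound}: $0\le\mathscr I\le e^{w}=e^{|x||y|/t}$, and
\[
e^{-\frac{|x|^2+|y|^2}{2t}+\frac{|x||y|}{t}}=e^{-\frac{(|x|-|y|)^2}{2t}}.
\]
Nonnegativity follows from the integral formula of Proposition~\ref{summation} when $b>0$: for real $w\ge0$ the weight $u^{b-1}(1-u)^\nu$ is positive, $\mathcal I_b(uw)>0$ because its power series has positive coefficients, and $e^{(1-u)wt}>0$. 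For $b=0$, $\mathscr I_{0,\nu}=e^{wt}>0$.

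\emph{Main obstacle.} The only delicate point is ensuring that Proposition~\ref{bound} really covers $b=0$ and the degenerate points $x=0$ or $y=0$. I would handle $b=0$ by the explicit exponential formula, and the degenerate points by noting that $\mathscr I_{b,\nu}(0,t)=1$.
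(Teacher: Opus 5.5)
Your proof is correct and follows the paper's argument, which simply combines the explicit formulas of Theorem~\ref{ker} with the bound of Proposition~\ref{bound}. You also fill in three details the paper's one-line proof leaves implicit: nonnegativity of $h_b$ via the positive integral representation of Proposition~\ref{summation}, the case $b=0$ via $\mathscr I_{0,\nu}(w,t)=e^{wt}$, and the degenerate points $x=0$ or $y=0$ via $\mathscr I_{b,\nu}(0,t)=1$.
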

	\begin{proof}
		
		This follows from Proposition~\ref{bound} and Theorem~\ref{ker}.
		
			\end{proof}
	
	\vspace{12pt}
	
	\section{Generalized derivatives $D_{b,n}$}\label{3}
	In this section, we introduce operators $D_{b,n}$ as deformations of the partial derivatives, arising from the representation-theoretic framework constructed in Section~\ref{2}. 
	
	We study their basic properties in Subsections~\ref{3.1} and \ref{3.2}, and derive their explicit formula in Subsection~\ref{3.3}. This explicit formula might be viewed as an analogue of Dunkl operators \cite{MR951883}, formally corresponding to the case where the reflection group is $O(N)$ and the root system is $\Phi=S^{N-1}$.
	
	\subsection{Definition of $D_{b,n}$}\label{3.1}
	In this subsection, we define the operators $D_{b,n}$.
	
	\begin{dfn}[The operators $D_{b,n}$]\label{Db}
		We define the operators $D_{b,n}$ ($n=1,\dots, N$) on $L^{2}(\mathbb{R}^{N},|x|^{2b}dx)$ with domain $W_{b,smooth}=\mathcal{S}(\mathbb{R}^{N})$ by
		\[ D_{b,n}:=\left[\frac{H_{b}-|x|^{2}}{2}, x_{n}\right].\]
		Here $H_{b}$ is the deformation of the Laplacian defined in Definition~\ref{Hb}.
	\end{dfn}
	This definition is motivated by the classical relation between the Laplacian and the partial derivatives. We will derive an explicit formula for $D_{b,n}$ in Subsection~\ref{3.3}. By definition, $D_{b,n}W_{b,alg}\subset W_{b,alg}$ and $D_{b,n}W_{b,smooth}\subset W_{b,smooth}$.
	
	\vspace{20pt}
	The next proposition describes the action of $x_n$ and $D_{b,n}$ on the basis vectors in $W_{b,alg}$, and will be used in Subsection~\ref{3.2} to prove the Fourier-intertwining relations.
	\begin{prp}[Action of $D_{b,n}$ and $x_{n}$ on $W_{b,alg}$]\label{act2}
		\begin{align*}
			\MoveEqLeft\frac{x_{n} + D_{b,n}}{2}\Phi_{b,\ell,p} 
			= -\Phi_{b\,,\,\ell-1,\,x_{n}p-\frac{1}{2\lambda_{N,m}}|x|^2\frac{\partial p}{\partial x_{n}} } 
			+ (b+\lambda_{N,m}+\ell) \Phi_{b,\,\ell,\,\frac{1}{2\lambda_{N,m}}\frac{\partial p}{\partial x_{n}} } \\
			\MoveEqLeft\frac{x_{n} - D_{b,n}}{2} \Phi_{b,\ell,p} = \Phi_{b\,,\,\ell,\,x_{n}p-\frac{1}{2\lambda_{N,m}}|x|^2\frac{\partial p}{\partial x_{n}} } 
			- (\ell+1) \Phi_{b,\,\ell+1,\,\frac{1}{2\lambda_{N,m}}\frac{\partial p}{\partial x_{n}} } 
		\end{align*}
	\end{prp}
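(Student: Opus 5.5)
The plan is to compute $x_n\Phi_{b,\ell,p}$ explicitly in the basis of Subsection~\ref{2.1}. Since every basis vector is an eigenvector of $\frac12(H_b-|x|^2)$ by Proposition~\ref{har}, the commutator $D_{b,n}=\bigl[\frac{H_b-|x|^2}{2},x_n\bigr]$ then acts on each component by multiplication by a difference of eigenvalues. No explicit analysis of the non-local operator $\mathcal R$ is needed.

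\emph{Step 1 (harmonic decomposition).} For $p\in\mathcal H^m(\mathbb R^N)$ with $\lambda=\lambda_{N,m}$, set
\[
q:=x_np-\frac{1}{2\lambda}|x|^2\frac{\partial p}{\partial x_n},\qquad r:=\frac{\partial p}{\partial x_n}\in\mathcal H^{m-1}(\mathbb R^N).
\]
We check that $q\in\mathcal H^{m+1}(\mathbb R^N)$. Indeed $\Delta(x_np)=2\partial_np$, and $\Delta(|x|^2\partial_np)=(2N+4(m-1))\partial_np=4\lambda\,\partial_np$ because $\partial_np$ is harmonic and homogeneous of degree $m-1$. This gives $x_np=q+\frac{1}{2\lambda}|x|^2 r$. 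Note that $\lambda_{N,m+1}=\lambda+1$ and $\lambda_{N,m-1}=\lambda-1$. In the degenerate cases ($m=0$, where $r=0$) the $r$-terms are simply absent, and the formula is read with that convention.

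\emph{Step 2 (re-expansion in Laguerre polynomials).} We write
\[
x_n\Phi_{b,\ell,p}=e^{-|x|^2/2}\Bigl(L^{(b+\lambda)}_\ell(|x|^2)\,q+\tfrac{1}{2\lambda}\,|x|^2L^{(b+\lambda)}_\ell(|x|^2)\,r\Bigr).
\]
For the $q$-part we use $L^{(\alpha)}_\ell=L^{(\alpha+1)}_\ell-L^{(\alpha+1)}_{\ell-1}$. For the $r$-part we use $tL^{(\alpha)}_\ell(t)=(\ell+\alpha)L^{(\alpha-1)}_\ell(t)-(\ell+1)L^{(\alpha-1)}_{\ell+1}(t)$. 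Both are standard identities, obtainable from the explicit sum defining $L^{(\nu)}_\ell$. This yields
\[
x_n\Phi_{b,\ell,p}=\Phi_{b,\ell,q}-\Phi_{b,\ell-1,q}+\tfrac{1}{2\lambda}\bigl[(b+\lambda+\ell)\Phi_{b,\ell,r}-(\ell+1)\Phi_{b,\ell+1,r}\bigr].
\]

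\emph{Step 3 (commutator).} By Proposition~\ref{har}, $\frac12(H_b-|x|^2)$ acts on $\Phi_{b,\ell',p'}$ with $p'\in\mathcal H^{m'}$ by $-(b+\lambda_{N,m'}+2\ell'+1)$. Subtracting the eigenvalue $-(b+\lambda+2\ell+1)$ of $\Phi_{b,\ell,p}$ gives the factor for each term:
\[
\Phi_{b,\ell,q}:\ -1,\qquad \Phi_{b,\ell-1,q}:\ +1,\qquad \Phi_{b,\ell,r}:\ +1,\qquad \Phi_{b,\ell+1,r}:\ -1.
\]
Hence
\[
D_{b,n}\Phi_{b,\ell,p}=-\Phi_{b,\ell,q}-\Phi_{b,\ell-1,q}+\tfrac{1}{2\lambda}\bigl[(b+\lambda+\ell)\Phi_{b,\ell,r}+(\ell+1)\Phi_{b,\ell+1,r}\bigr].
\]
Taking half the sum and half the difference with Step 2 gives exactly the two stated formulas.

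The only step requiring care is Step 2: the Laguerre parameter shifts correctly ($+1$ on the $q$-part, $-1$ on the $r$-part) to match $b+\lambda_{N,m\pm1}$, and boundary cases ($\ell=0$, $m=0$, small $N$ where $\lambda$ may vanish but then $r=0$) must be handled by the usual convention $\Phi_{b,-1,\cdot}=0$.
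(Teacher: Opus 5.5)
Your proposal is correct and follows essentially the paper's route. You use the same harmonic decomposition $x_np=q+\frac{1}{2\lambda_{N,m}}|x|^2\frac{\partial p}{\partial x_n}$ and the same two Laguerre identities (the second with the parameter shifted by one) to expand $x_n\Phi_{b,\ell,p}$ in the basis, and then evaluate the commutator by eigenvalue differences from Proposition~\ref{har}; the paper leaves that last step implicit and you make it explicit.
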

	\begin{proof}
		Let $p(x) \in \mathcal{H}^m(\mathbb{R}^N)$. We decompose it as
		\begin{align*}
			\MoveEqLeft x_{n}\,p(x) = \left(x_{n}\,p(x)- \frac{1}{2\lambda_{N,m}}|x|^2\frac{\partial p}{\partial x_{n}}(x)\right) + \frac{1}{2\lambda_{N,m}}|x|^2\frac{\partial p}{\partial x_{n}}(x).
		\end{align*}
		Then, $x_{n}p(x)- \frac{1}{2\lambda_{N,m}}|x|^2\frac{\partial p}{\partial x_{n}}(x) $ is a $(m+1)$-th harmonic polynomial and
		$\frac{\partial p}{\partial x_{n}}(x)$ is a $(m-1)$-th harmonic polynomial. Using this and the following identities for Laguerre polynomials
		\begin{align*}
			\MoveEqLeft L^{(\alpha+1)}_{\ell}(t)-L^{(\alpha+1)}_{\ell-1}(t)=L^{(\alpha)}_{\ell}(t) \\
			\MoveEqLeft tL_{\ell}^{(\alpha+1)}(t) = -(\ell+1)L_{\ell +1}^{(\alpha)}(t)+(\ell+\alpha +1)L^{(\alpha)}_{\ell}(t),
		\end{align*}
		the claim follows.
	\end{proof}
	\vspace{12pt}
	
	\subsection{Interchange of $D_{b,n}$ and $x_n$ under $\mathcal{F}_b$}\label{3.2}
	
	In this subsection, we show that the generalized Fourier transform $\mathcal{F}_b$ interchanges $x_n$ and $D_{b,n}$ in Theorem~\ref{fou1}. This leads to the commutativity of the operators $D_{b,n}$, the quadratic relations in Corollary~\ref{relation}, and the standard action of $\widetilde{SL}(2,\mathbb{R})$ on $V_{b,n}:=\{x_n,iD_{b,n}\}_{\mathbb R}$ as in Corollary~\ref{lift2}.
	
	\vspace{6pt}
	
	\begin{thm}[Interchange of $D_{b,n}$ and $x_n$ under $\mathcal{F}_b$]\label{fou1}
		Let $\mathcal{F}_b$ be the generalized Fourier transform defined in Definition~\ref{Fo}, and let $D_{b,n}$ be the operators defined in Definition~\ref{Db}. Then
		\[\mathcal{F}_{b}D_{b,n}\mathcal{F}_{b}^{-1}	=  ix_{n}, \hspace{36pt}\mathcal{F}_{b}x_{n}\mathcal{F}_{b}^{-1}	=  iD_{b,n}.\]
	\end{thm}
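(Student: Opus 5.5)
We verify the two identities on the basis vectors $\Phi_{b,\ell,p}$ of $W_{b,alg}$, using only Theorem~\ref{Fou}(1) and Proposition~\ref{act2}, and then pass to the domain $W_{b,smooth}=\mathcal{S}(\mathbb{R}^N)$ by density. Introduce the ladder combinations
\[
A_n^{+}:=\tfrac12(x_n-D_{b,n}),\qquad A_n^{-}:=\tfrac12(x_n+D_{b,n}).
\]
It suffices to prove $\mathcal{F}_b A_n^{+}\mathcal{F}_b^{-1}=-i A_n^{+}$ and $\mathcal{F}_b A_n^{-}\mathcal{F}_b^{-1}=i A_n^{-}$. Indeed, adding and subtracting gives
\[
\mathcal{F}_b x_n\mathcal{F}_b^{-1}=-iA_n^{+}+iA_n^{-}=iD_{b,n},\qquad \mathcal{F}_b D_{b,n}\mathcal{F}_b^{-1}=iA_n^{+}+iA_n^{-}=ix_n .
\]

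\emph{Step 1 (degree bookkeeping).} Fix $p\in\mathcal H^m(\mathbb R^N)$ and put
\[
q:=x_np-\tfrac{1}{2\lambda_{N,m}}|x|^2\partial_np\in\mathcal H^{m+1},\qquad r:=\tfrac1{2\lambda_{N,m}}\partial_np\in\mathcal H^{m-1},
\]
as in the proof of Proposition~\ref{act2}. By that proposition, $A_n^{+}\Phi_{b,\ell,p}$ is a combination of $\Phi_{b,\ell,q}$ and $\Phi_{b,\ell+1,r}$. Under Theorem~\ref{Fou}(1), both have $\mathcal F_b$-eigenvalue $i^{-(2\ell+m+1)}$, since $2\ell+(m+1)=2(\ell+1)+(m-1)$. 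Similarly, $A_n^{-}\Phi_{b,\ell,p}$ is a combination of $\Phi_{b,\ell-1,q}$ and $\Phi_{b,\ell,r}$, both with eigenvalue $i^{-(2\ell+m-1)}$. Hence
\[
\mathcal F_bA_n^{\pm}\Phi_{b,\ell,p}=i^{-(2\ell+m)}\,i^{\mp1}A_n^{\pm}\Phi_{b,\ell,p}=i^{\mp1}A_n^{\pm}\mathcal F_b\Phi_{b,\ell,p}.
\]
This is exactly the required conjugation identity on $W_{b,alg}$. In representation-theoretic terms, $A_n^{\pm}$ shift the $\mathbf h$-weight by $\pm1$ while $\mathcal F_b$ acts on weight $k$ by $i^{-(k-b-N/2)}$.

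\emph{Step 2 (edge cases).} The cases $m=0$ (so $\partial_np=0$) and $\ell=0$ (where $\Phi_{b,-1,\cdot}=0$) fall under the same computation with the missing terms set to zero. For $N=2$ and $m=0$, the factor $1/(2\lambda_{N,m})$ multiplies $\partial_n p=0$ and causes no trouble. For $N=1$, $\lambda_{1,0}=-1/2$ is nonzero and the same bookkeeping applies.

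\emph{Step 3 (extension to $\mathcal S(\mathbb R^N)$).} This is the only mildly delicate point. Since $\mathcal F_b$ is unitary and preserves $W_{b,alg}$, the identity $\mathcal F_bA_n^{\pm}=i^{\mp1}A_n^{\pm}\mathcal F_b$ holds on $W_{b,alg}$. To extend it, note that $A_n^{\pm}$ are continuous on the Fréchet space $W_{b,smooth}$ with the topology of Remark~\ref{conti}. This follows from Proposition~\ref{act2}: the coefficients grow at most like $(1+2\ell+m)^{1/2}$ in the normalized basis, so $A_n^{\pm}$ map $H^{s+1}_b$ to $H^{s}_b$. Moreover, $\mathcal F_b$ preserves every $H^s_b$, because it acts diagonally by unimodular scalars. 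An element of $\mathcal S(\mathbb R^N)$ is the Fréchet limit of its finite partial expansions in $W_{b,alg}$ (by the decay characterization in Proposition~\ref{smooth}), so the identity passes to the limit on $\mathcal{S}(\mathbb{R}^N)$. Finally, $\mathcal F_b^{-1}=\mathcal F_b^3$ also preserves $\mathcal S(\mathbb{R}^N)$, so the conjugated forms in the statement make sense there.
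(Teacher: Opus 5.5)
Your proposal is correct and follows the paper's proof: the paper also conjugates the ladder combinations $\frac{x_n+D_{b,n}}{2}$ and $\frac{x_n-D_{b,n}}{2}$ by $\mathcal{F}_b$, using Proposition~\ref{act2} and Theorem~\ref{Fou}(1) to obtain the factors $i$ and $-i$ on $W_{b,alg}$, and then extends to $W_{b,smooth}$ by continuity. Your Step~3, with the $H^{s+1}_b\to H^{s}_b$ bound for $A_n^{\pm}$ and the invariance of each $H^s_b$ under $\mathcal{F}_b$, simply supplies the continuity that the paper asserts without detail.
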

	\begin{proof}
		By Proposition~\ref{act2} and Theorem~\ref{Fou}, it follows that
		$$ \mathcal{F}_{b}\frac{x_{n}+D_{b,n}}{2}\mathcal{F}_{b}^{-1} = i\frac{x_{n}+D_{b,n}}{2} \hspace{30pt} \mathcal{F}_{b}\frac{x_{n}-D_{b,n}}{2}\mathcal{F}_{b}^{-1} = -i\frac{x_{n}-D_{b,n}}{2}$$
		on $W_{b,alg}$. Since $x_{n}$, $ D_{b,n}$, $\mathcal{F}_{b}$, $\mathcal{F}_{b}^{-1}$ are continuous on $W_{b,smooth}$, these identities extend to $W_{b,smooth}$ and the claim follows.
	\end{proof}
	
	\vspace{10pt}
	The Fourier-intertwining relations in Theorem~\ref{fou1} immediately imply the following basic consequences.
	
	\begin{cor}[Essential skew-adjointness of $D_{b,n}$]\label{adjoint2}
		$D_{b,n}$ is essentially skew-selfadjoint.
	\end{cor}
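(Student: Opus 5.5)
The plan is to realize $D_{b,n}$ as a unitary conjugate of the multiplication operator $x_n$, and then transfer essential self-adjointness. By Theorem~\ref{fou1}, on $W_{b,smooth}=\mathcal{S}(\mathbb{R}^N)$ we have
\[
iD_{b,n}=\mathcal{F}_b\,x_n\,\mathcal{F}_b^{-1}, \qquad\text{equivalently}\qquad D_{b,n}=-i\,\mathcal{F}_b\,x_n\,\mathcal{F}_b^{-1}.
\]
Here $\mathcal{F}_b$ is unitary on $L^2(\mathbb{R}^N,|x|^{2b}dx)$, since it is (up to the unimodular scalar $i^{b+N/2}$) the image under $\Omega_b$ of a group element. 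Moreover, $\mathcal{F}_b$ maps $\mathcal{S}(\mathbb{R}^N)$ bijectively onto itself. This holds because $\mathcal{F}_b$ acts diagonally on the basis $\Phi_{b,\ell,j}$ with unimodular eigenvalues (Theorem~\ref{Fou}), so it preserves the coefficient-decay characterization of $W_{b,smooth}$ in the proof of Proposition~\ref{smooth}; and $W_{b,smooth}=\mathcal{S}(\mathbb{R}^N)$ by that proposition.

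Consequently, the closure question for $D_{b,n}$ on $\mathcal{S}(\mathbb{R}^N)$ is unitarily equivalent to the same question for $-i x_n$ on $\mathcal{F}_b^{-1}\mathcal{S}(\mathbb{R}^N)=\mathcal{S}(\mathbb{R}^N)$. It therefore suffices to show that multiplication by $x_n$, with domain $\mathcal{S}(\mathbb{R}^N)$, is essentially self-adjoint on $L^2(\mathbb{R}^N,|x|^{2b}dx)$. Multiplying by $-i$ then gives essential skew-adjointness, and unitary conjugation preserves both the adjoint and the closure.

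For the multiplication operator, I would use the basic criterion: a symmetric operator $T$ is essentially self-adjoint if $\ker(T^*\mp i)=\{0\}$. Symmetry of $x_n$ on $\mathcal{S}$ is clear because $x_n$ is real. Suppose $g\in L^2(|x|^{2b}dx)$ satisfies
\[
\int (x_n\pm i)\,f\,\bar g\,|x|^{2b}dx=0 \qquad\text{for all } f\in\mathcal{S}(\mathbb{R}^N).
\]
Since $(x_n\pm i)\mathcal{S}(\mathbb{R}^N)\supset C_c^\infty(\mathbb{R}^N\setminus\{0\})$ (divide by the nonvanishing smooth factor $x_n\pm i$), and $C_c^\infty(\mathbb{R}^N\setminus\{0\})$ is dense in the weighted $L^2$ space ($\{0\}$ being null for $|x|^{2b}dx$, which is locally finite because $b>-N/2$), we conclude $g=0$.

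The only point requiring care is the invariance $\mathcal{F}_b\mathcal{S}=\mathcal{S}$, together with the fact that the intertwining relation holds on all of $\mathcal{S}$ rather than merely on $W_{b,alg}$. Both are supplied by Theorem~\ref{fou1} and Proposition~\ref{smooth}, so I expect no genuine obstacle.
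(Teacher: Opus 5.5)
Your proposal is correct and follows the paper's argument: the paper also deduces the statement from Theorem~\ref{fou1}, using it to conjugate $D_{b,n}$ by the unitary $\mathcal{F}_b$ into a multiple of $x_n$, and then invokes the essential skew-adjointness of $ix_n$. You additionally supply two details the paper leaves implicit, namely that $\mathcal{F}_b$ maps $\mathcal{S}(\mathbb{R}^N)$ onto itself and the deficiency-index check showing that $x_n$ is essentially self-adjoint on $\mathcal{S}(\mathbb{R}^N)$ in the weighted space $L^2(\mathbb{R}^N,|x|^{2b}dx)$.
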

	\begin{proof}
		This follows from Theorem~\ref{fou1} and the essential skew-adjointness of $ix_{n}$.
	\end{proof}

	\begin{cor}\label{relation}
		Let $D_{b,n}$ be the operators defined in Definition~\ref{Db} and let $H_b$ be the deformation of Laplacian defined in Definition~\ref{Hb}. Then
		\begin{enumerate}
			\item (Commutativity of $D_{b,n}$) 
			For $m,n=1,\dots, N$, 
			$$[D_{b,m},D_{b,n}]=0.$$
			\item (Standard representation at the Lie algebra level) 
			The $\mathfrak{sl}_{2}$-triple $\left\{\frac{i}{2}|x|^2\,,\,\frac{i}{2}H_{b}\,,\,E+\frac{N+2b}{2}\right\}$ acts on the real vector spaces $\left\{x_{n}, iD_{b,n}\right\}_{\mathbb{R}}$\hspace{5pt} ($\,n=1,\dots, N\,$) as the standard representation. More specifically,
			\begin{align*}
				\MoveEqLeft\left[\frac{i}{2}|x|^2, x_{n}\right]=0	&& \left[\frac{i}{2}|x|^2,iD_{b,n}\right]= x_{n}\\
				\MoveEqLeft\left[E+\frac{N+2b}{2}, x_{n}\right]=x_{n}	&& \left[E+\frac{N+2b}{2},iD_{b,n}\right]=-iD_{b,n}\\
				\MoveEqLeft\left[\frac{i}{2}H_{b},x_{n}\right]= iD_{b,n}	&& \left[\frac{i}{2}H_{b},iD_{b,n}\right]=0.\\
			\end{align*}
			
			\item (Quadratic relations)
			\[
				|x|^2=\sum_{n=1}^N x_n^2,\qquad
				E+\frac{N+2b}{2}=\frac12\sum_{n=1}^N\{D_{b,n},x_n\},\qquad
				H_b=\sum_{n=1}^N D_{b,n}^2.
			\]
		\end{enumerate}
	\end{cor}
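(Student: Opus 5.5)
Everything will be derived from Theorem~\ref{fou1} together with the definition $D_{b,n}=\left[\frac{H_b-|x|^2}{2},x_n\right]$ and the $\mathfrak{sl}_2$-relations of Proposition~\ref{sl}. All identities will be verified on $W_{b,smooth}=\mathcal{S}(\mathbb{R}^N)$, which every operator involved preserves.

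\textbf{Commutativity.} I would conjugate by $\mathcal{F}_b$. By Theorem~\ref{fou1}, $\mathcal{F}_b D_{b,m}D_{b,n}\mathcal{F}_b^{-1}=(ix_m)(ix_n)=-x_mx_n$. This is symmetric in $m,n$, so $[D_{b,m},D_{b,n}]=\mathcal{F}_b^{-1}(-[x_m,x_n])\mathcal{F}_b=0$.

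\textbf{Standard representation (item 2).} I would take the entries in the following order.
\begin{enumerate}
\item $[|x|^2,x_n]=0$ is trivial.
\item $[E,x_n]=x_n$ is the usual Euler relation.
\item $[\frac{i}{2}H_b,x_n]=iD_{b,n}$ follows from the definition of $D_{b,n}$, since $[\frac{i}{2}|x|^2,x_n]=0$ gives $[\frac{i}{2}H_b,x_n]=i\left[\frac{H_b-|x|^2}{2},x_n\right]$.
\item The remaining three relations involve $iD_{b,n}$. Since $\mathcal{F}_b$ is $\Omega_b$ of a Weyl element, conjugation by it acts on the triple by $\mathrm{Ad}(w_0)$. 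Concretely, $\mathcal{F}_b$ commutes with $\frac12(H_b-|x|^2)$, so $\mathcal{F}_b\,\frac{i}{2}|x|^2\,\mathcal{F}_b^{-1}=-\frac{i}{2}H_b$, $\mathcal{F}_b\,\frac{i}{2}H_b\,\mathcal{F}_b^{-1}=-\frac{i}{2}|x|^2$, and $\mathcal{F}_b(E+\frac{N+2b}{2})\mathcal{F}_b^{-1}=-(E+\frac{N+2b}{2})$.
\item To check item 4, note that $\mathcal{F}_b$ commutes with $H_b-|x|^2$, so it suffices to show that conjugation by $\mathcal{F}_b$ negates $H_b+|x|^2$ and $E+\frac{N+2b}{2}$. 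These two operators are $\mathbf{e}\pm\mathbf{f}$ up to scalars, as in Remark~\ref{Cayley}. By Proposition~\ref{act1}, $\mathbf{e}$ and $\mathbf{f}$ shift $\ell$ by $\pm1$, while by Theorem~\ref{Fou}(1) $\mathcal{F}_b$ acts by $i^{-(2\ell+m)}$. Hence $\mathcal{F}_b\mathbf{e}\mathcal{F}_b^{-1}=-\mathbf{e}$ and $\mathcal{F}_b\mathbf{f}\mathcal{F}_b^{-1}=-\mathbf{f}$ on $W_{b,alg}$, and the identities extend to $W_{b,smooth}$ by continuity.
\item Conjugating the three $x_n$-relations by $\mathcal{F}_b$ and using $x_n\mapsto iD_{b,n}$ from Theorem~\ref{fou1} now yields the three $iD_{b,n}$-relations. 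For example, conjugating $[E+\frac{N+2b}{2},x_n]=x_n$ gives $[-(E+\frac{N+2b}{2}),iD_{b,n}]=iD_{b,n}$. The sign bookkeeping in this step is where I expect errors, and I would check it against the classical case $b=0$.
\end{enumerate}

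\textbf{Quadratic relations (item 3).}
\begin{enumerate}
\item $H_b=\sum_n D_{b,n}^2$: conjugating by $\mathcal{F}_b$, the right-hand side becomes $-\sum_n x_n^2=-|x|^2$, which matches $\mathcal{F}_bH_b\mathcal{F}_b^{-1}=-|x|^2$ from step 4 above.
\item The Euler identity: from $[\frac{i}{2}|x|^2,iD_{b,n}]=x_n$ we get $[|x|^2,D_{b,n}]=-2x_n$. Then
\[
\tfrac12\sum_n\{D_{b,n},x_n\}=\sum_n x_nD_{b,n}+\tfrac12\sum_n[D_{b,n},x_n].
\]
A direct route is to compute $[\frac{i}{2}|x|^2,\frac{i}{2}\sum_n D_{b,n}^2]$ using $[|x|^2,D_{b,n}]=-2x_n$. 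This gives $\frac12\sum_n\{x_n,D_{b,n}\}$. By the triple relation of Proposition~\ref{sl} and the identity $H_b=\sum_n D_{b,n}^2$, the same commutator equals $E+\frac{N+2b}{2}$.
\end{enumerate}

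The main obstacle is the conjugation of the $\mathfrak{sl}_2$-triple by $\mathcal{F}_b$ together with its signs. Once that is settled, the rest is formal.
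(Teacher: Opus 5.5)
Your proposal is correct. For items 1 and 2, and for the identities $|x|^2=\sum_n x_n^2$ and $H_b=\sum_n D_{b,n}^2$, it is the paper's argument: you conjugate the $x_n$-side relations by $\mathcal{F}_b$ using Theorem~\ref{fou1}. Your ladder-operator check goes one step further than the paper. It shows $\mathcal{F}_b\mathbf{e}\mathcal{F}_b^{-1}=-\mathbf{e}$ and $\mathcal{F}_b\mathbf{f}\mathcal{F}_b^{-1}=-\mathbf{f}$, hence $|x|^2\mapsto -H_b$, $H_b\mapsto -|x|^2$ and $E+\frac{N+2b}{2}\mapsto -(E+\frac{N+2b}{2})$. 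The paper's phrase ``Fourier transform of them'' uses these conjugation rules tacitly, via Remark~\ref{Weyl}, and your signs all come out right.

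The one genuine difference is the Euler identity. The paper proves it with no Fourier input. From $D_{b,n}=\frac12[H_b,x_n]$ it gets $2\sum_n\{D_{b,n},x_n\}=\sum_n\{[H_b,x_n],x_n\}=[H_b,|x|^2]=4(E+\frac{N+2b}{2})$, using Proposition~\ref{sl}. You instead compute the Fourier-dual commutator $[\frac{i}{2}|x|^2,\frac{i}{2}\sum_n D_{b,n}^2]$. This needs the relations $[|x|^2,D_{b,n}]=-2x_n$ and $H_b=\sum_n D_{b,n}^2$, which you derived earlier by conjugation. Both routes are valid. The paper's is shorter and independent of Theorem~\ref{fou1}. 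Yours obtains the whole corollary from conjugation by $\mathcal{F}_b$ together with Proposition~\ref{sl}.
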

	\vspace{6pt}
	\begin{proof}
		All assertions follow from Theorem~\ref{fou1}, except for the identity
		$E+\frac{N+2b}{2}=\frac12\sum_{n=1}^N\{D_{b,n},x_n\}$, which we prove directly.
		\begin{enumerate}
			\item The claim is the Fourier transform of $[x_{m},x_{n}]=0$.
			\item The three equalities of the left-hand side follow from the definition. The three equalities of the right-hand side are the Fourier transform of them.
			\item The first equality is trivial, and the third one is the Fourier transform of it.
			The second one is shown as follows:
			\begin{align*}
				\MoveEqLeft 2\sum_{n=1}^{N}\left\{D_{b,n},x_{n}\right\}=\sum_{n=1}^{N}\left\{[H_{b},x_{n}],x_{n}\right\} & \\
				\MoveEqLeft = \sum_{n=1}^{N}\left\{\left(H_{b}x_{n}-x_{n}H_{b}\right)x_{n} + x_{n}\left(H_{b}x_{n}-x_{n}H_{b}\right) \right\} \\
				\MoveEqLeft = H_{b}|x|^{2}-|x|^{2}H_{b} = \left[H_{b},|x|^{2}\right] =4\left(E+\frac{N+2b}{2}\right).
			\end{align*}

		\end{enumerate}
	\end{proof}
	
	\begin{rmk}\label{weylrel}
		We note in passing that the commutator $[D_{b,m},x_n]$ can also be computed explicitly. Using the formula for $D_{b,n}$ in Theorem~\ref{explicit} in the next subsection, one obtains
		\[
		[D_{b,m},x_n]f(x)
		=
		\delta_{mn}f(x)
		+
		\frac{2b}{\mathrm{vol}(S^{N-1})}
		\int_{S^{N-1}}
		\xi_m\xi_n\,f(\sigma_\xi(x))\,d\xi
		\qquad
		(f\in \mathcal S(\mathbb R^N)),
		\]
		where $\sigma_\xi(x)=x-2\langle x,\xi\rangle \xi$ denotes the reflection. Equivalently,
		\[
		[D_{b,m},x_n]
		=
		\delta_{mn}
		+
		\frac{2b}{\mathrm{vol}(S^{N-1})}
		\int_{S^{N-1}}
		\xi_m\xi_n\,\sigma_\xi\,d\xi,
		\]
		where $(\sigma_\xi f)(x):=f(\sigma_\xi(x))$.
		When $b=0$, this reduces to the classical Weyl algebra relation
		$\left[\frac{\partial}{\partial x_m},x_n\right]=\delta_{mn}$.
	\end{rmk}
	
	\vspace{6pt}
	
	\begin{cor}[Standard representation at the group level]\label{lift2}
		Let $D_{b,n}$ be the operators defined in Definition~\ref{Db} and let $\Omega_{b}$ be the unitary representation constructed in Theorem~\ref{lift}. 
		Then $\widetilde{SL}(2,\mathbb{R})$ acts on $V_{b,n}:=\left\{x_{n}, iD_{b,n}\right\}_{\mathbb{R}}$ as the standard representation via
		$$ (g,v) \mapsto \Omega_{b}(g)\circ v\circ\Omega_{b}(g)^{-1}\hspace{24pt} (g\in\widetilde{SL}(2,\mathbb{R}), v\in V_{b,n} ).$$
	\end{cor}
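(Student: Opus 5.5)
The plan is to pass from the Lie-algebra statement in Corollary~\ref{relation}(2) to the group level by a uniqueness argument for a linear ODE on the two-dimensional space $V_{b,n}$. Fix $n$ and let $\sigma$ denote the standard representation of $\widetilde{SL}(2,\mathbb{R})$ on $\mathbb{R}^2$, factoring through $SL(2,\mathbb{R})$. Identify $V_{b,n}$ with $\mathbb{R}^2$ by $x_n\mapsto e_1$ and $iD_{b,n}\mapsto e_2$. By Corollary~\ref{relation}(2), for every $X\in\mathfrak{sl}_2(\mathbb{R})$ the map $v\mapsto[d\Omega_b(X),v]$ preserves $V_{b,n}$ and acts by the matrix $X$ in this basis. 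All these identities hold on $W_{b,smooth}=\mathcal{S}(\mathbb{R}^N)$, which is invariant under $x_n$, $D_{b,n}$, $d\Omega_b(X)$ and $\Omega_b(g)$ (Proposition~\ref{smooth}). The claim to prove is
\[
\Omega_b(g)\,v\,\Omega_b(g)^{-1}=\sigma(g)v \quad\text{on } \mathcal{S}(\mathbb{R}^N)\ \text{for all } g\in\widetilde{SL}(2,\mathbb{R}),\ v\in V_{b,n}.
\]

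Since $\widetilde{SL}(2,\mathbb{R})$ is connected, it is generated by one-parameter subgroups $e^{tX}$, and both sides are multiplicative in $g$. It therefore suffices to treat $g=e^{tX}$. Fix $f\in\mathcal{S}(\mathbb{R}^N)$ and $v\in V_{b,n}$, and set
\[
F(t):=\Omega_b(e^{-tX})\,\bigl(\sigma(e^{tX})v\bigr)\,\Omega_b(e^{tX})f .
\]
Differentiate $F$ in $t$ in the Fréchet topology of $\mathcal{S}(\mathbb{R}^N)$ from Remark~\ref{conti}. This uses two facts: that $t\mapsto\Omega_b(e^{tX})f$ is smooth into $W_{b,smooth}$, and that $x_n$ and $D_{b,n}$ are continuous on $W_{b,smooth}$ (as already used in the proof of Theorem~\ref{fou1}). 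The derivative is
\[
F'(t)=\Omega_b(e^{-tX})\Bigl(-[d\Omega_b(X),w_t]+Xw_t\Bigr)\Omega_b(e^{tX})f,\qquad w_t:=\sigma(e^{tX})v .
\]
This vanishes by Corollary~\ref{relation}(2). Hence $F(t)=F(0)=vf$, which gives $\Omega_b(e^{tX})\,v\,\Omega_b(e^{tX})^{-1}=\sigma(e^{tX})v$ on $\mathcal{S}(\mathbb{R}^N)$, and the general case follows by multiplicativity.

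The main obstacle is justifying the differentiation. Specifically, one needs that $\Omega_b(e^{tX})$ acts continuously on the Fréchet space $W_{b,smooth}$, uniformly for $t$ in compact sets, and that $t\mapsto\Omega_b(e^{tX})f$ is $C^1$ in that topology. This is standard for the smooth vectors of a unitary representation: the topology given by the seminorms $\|d\Omega_b(u)f\|$ for $u\in U(\mathfrak{g})$ agrees with the one in Remark~\ref{conti}, since $1+|x|^2-H_b$ is the Casimir-type element $1+2\mathbf{h}$.

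An alternative that avoids this is to check the identity directly on generators. For $\widetilde K$, use the explicit eigen-action on $W_{b,alg}$ from Proposition~\ref{act2}: $\tfrac{x_n\pm D_{b,n}}{2}$ shift the $\mathbf{h}$-eigenvalue by $\pm1$, so conjugation by $e^{\theta\mathbf{h}}$ multiplies them by $e^{\pm\theta}$, exactly as the Cayley-transformed matrices prescribe. For $A$ and $N$, use the explicit formulas in Remark~\ref{arbitrary}. Since $\widetilde KAN=\widetilde{SL}(2,\mathbb{R})$, the identity then extends to all $g$ by density and continuity.
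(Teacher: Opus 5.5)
Your main argument is correct and is essentially the paper's proof: both differentiate the conjugated operator along $e^{tX}$ in the Fr\'echet topology of $W_{b,smooth}$ and feed in Corollary~\ref{relation}(2). The only difference is cosmetic. The paper uses Cayley--Hamilton to get a closed first-order system for $\bigl(\Omega_b(e^{tX})v\Omega_b(e^{-tX}),\,\Omega_b(e^{tX})[d\Omega_b(X),v]\Omega_b(e^{-tX})\bigr)$ and solves it by a matrix exponential, whereas you show directly that $\Omega_b(e^{-tX})\bigl(\sigma(e^{tX})v\bigr)\Omega_b(e^{tX})f$ has zero derivative, which also makes explicit the uniqueness step the paper leaves implicit.
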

	
	\vspace{3pt}
	\begin{proof}
		Let $v \in V_{b,n}$ and $X \in \mathfrak{sl}_{2}(\mathbb{R})$. Since $V_{b,n}$ is two-dimensional and stable under $\mathrm{ad}\bigl(d\Omega_b(X)\bigr)$ by Corollary~\ref{relation}, item~2, there exists $c_{0},c_{1}\in \mathbb{R}$ such that 
		\[\mathrm{ad}\bigl(d\Omega_b(X)\bigr)^{2}v=c_{1}\, \mathrm{ad}\bigl(d\Omega_b(X)\bigr)\,v \,+\, c_{0}\,v.\]
		Set $F_{k}(t):=\mathrm{ad}\bigl(d\Omega_b(X)\bigr)^{k}\,\Omega_{b}(e^{tX})\circ v\circ\Omega_{b}(e^{-tX})$. 
		Using the Fr\'{e}chet topology of $W_{b,smooth}$; see Remark~\ref{conti}, we have $\frac{dF_{k}}{dt}(t)=F_{k+1}(t)$ and $F_{2}(t)= c_{1}F_{1}(t)+c_{0}F_{0}(t)$. Thus, \[\frac{d}{dt}\begin{pmatrix}
			F_{0}(t), F_{1}(t)
		\end{pmatrix} 
		=\begin{pmatrix}
		F_{0}(t), F_{1}(t) 
		\end{pmatrix}
		\left(\begin{smallmatrix}
		0 & c_{0} \\ 1 & c_{1}
		\end{smallmatrix}\right).\] 
		Solving this system, we obtain $\begin{pmatrix}
		F_{0}(t), F_{1}(t)
		\end{pmatrix} = \begin{pmatrix}
		v, [d\Omega_b(X),v]
		\end{pmatrix}\exp(t\left(\begin{smallmatrix}
		0 & c_{0} \\ 1 & c_{1}
		\end{smallmatrix}\right))$. In particular, $F_{0}(t)=\Omega_{b}(e^{tX})\circ v\circ\Omega_{b}(e^{-tX})$ remains in $V_{b,n}$, and the induced action is given by the standard two-dimensional representation. This proves the claim.
	\end{proof}

	\vspace{24pt}
	\begin{rmk}[Order of the argument]
		We may regard Theorem~\ref{fou1} as a manifestation of the Weyl group action exchanging weights, 
		see Remark~\ref{Weyl}.
		The argument can also be reversed: once Corollaries~\ref{relation} and \ref{lift2} are established, 
		Theorem~\ref{fou1} follows from representation theory.
	\end{rmk}

	\vspace{12pt}
	
	\subsection{Explicit formula for $D_{b,n}$}\label{3.3}
	In this subsection, we derive explicit formulas for the operator $D_{b,n}$, which might be viewed as an analogue of Dunkl operators.
	
	\vspace{12pt}
	The following theorem gives two equivalent explicit expressions for $D_{b,n}$: one in terms of integration over the sphere $|x|=|y|$, and the other in terms of reflections.
	\begin{thm}[Explicit formula for $D_{b,n}$]\label{explicit} 
		Let $D_{b,n}$ be the operators defined in Definition~\ref{Db}.
		For $f\in \mathcal{S}(\mathbb{R}^{N})$, the operator $D_{b,n}$ admits the following two expressions:
		\begin{align*}
			\MoveEqLeft D_{b,n}f(x) =\frac{\partial f}{\partial x_{n}}(x) + \frac{2b}{\mathrm{vol}(S^{N-1})}\int_{|x|=|y|} \frac{x_{n}-y_{n}}{\left| x- y \right|^{N}} (f(x)-f(y))\,dy & \\
			& =\frac{\partial f}{\partial x_{n}}(x) + \frac{b}{\mathrm{vol}(S^{N-1})} \int_{S^{N-1}} \xi_{n}\frac{f(x)-f(\sigma_{\xi}(x))}{\langle \xi,x\rangle}\,d\xi, &
		\end{align*}
		
		where $d\xi$ is the $O(N)$-invariant measure on $S^{N-1}$,
		and $dy$ is the corresponding $O(N)$-invariant measure on the sphere
		$\{y\in\mathbb{R}^N : |y|=|x|\}$.
	\end{thm}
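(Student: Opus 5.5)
The plan is to reduce everything to the basis functions $f(|x|^2)p(x)$ with $p\in\mathcal H^m(\mathbb R^N)$, on which both sides can be computed, and then extend by continuity to $\mathcal S(\mathbb R^N)=W_{b,smooth}$.

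\textbf{Step 1: reduce $D_{b,n}$ to a commutator with $\mathcal R$.} Since $|x|^2$ commutes with $x_n$ and $[\Delta,x_n]=2\partial_{x_n}$, Definition~\ref{Db} gives
\[
D_{b,n}=\partial_{x_n}+b\,|x|^{-2}[\mathcal R,x_n].
\]
To compute $[\mathcal R,x_n]$, use the alternative expression $\mathcal R=E+\frac{N-2}{2}-A$, where $A$ acts on $f(|x|^2)q(x)$ with $q\in\mathcal H^k$ by multiplication by $\lambda_{N,k}$. Apply the decomposition from the proof of Proposition~\ref{act2}, namely $x_np=h_{m+1}+\frac{|x|^2}{2\lambda_{N,m}}\partial_np$ with $h_{m+1}\in\mathcal H^{m+1}$. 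A short computation then gives
\[
[\mathcal R,x_n]\bigl(f(|x|^2)p\bigr)=\frac{|x|^2}{\lambda_{N,m}}f(|x|^2)\,\partial_np ,
\]
and hence
\[
D_{b,n}\bigl(f(|x|^2)p\bigr)=\partial_n\bigl(f(|x|^2)p\bigr)+\frac{b}{\lambda_{N,m}}f(|x|^2)\,\partial_np .
\]

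\textbf{Step 2: reduce the spherical operator to harmonic polynomials.} Set
\[
T_ng(x):=\frac{1}{\mathrm{vol}(S^{N-1})}\int_{S^{N-1}}\xi_n\frac{g(x)-g(\sigma_\xi x)}{\langle\xi,x\rangle}\,d\xi .
\]
It then suffices to prove $T_np=\lambda_{N,m}^{-1}\partial_np$ for $p\in\mathcal H^m$. Radial factors pass through $T_n$ because $|\sigma_\xi x|=|x|$.

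For each $\xi$, the difference $p(x)-p(\sigma_\xi x)$ is divisible by $\langle\xi,x\rangle$, so $T_np$ is a homogeneous polynomial of degree $m-1$. Moreover $(T_1,\dots,T_N)$ is an $O(N)$-equivariant map $\mathcal H^m\to\mathbb C^N\otimes\mathcal P^{m-1}$. Restricted to the sphere, $\mathcal P^{m-1}$ decomposes as $\mathcal H^{m-1}\oplus\mathcal H^{m-3}\oplus\cdots$, while $\mathbb C^N\otimes\mathcal H^{m-k}$ contains $\mathcal H^m$ only for $k=\pm1$. By Schur's lemma, $Tp=c_m\nabla p$ for some constant $c_m$. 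For $N=1$ this can instead be checked by hand, since only $m=0,1$ occur.

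\textbf{Step 3: determine $c_m$ by contracting with $x$.} Because $\sum_nx_n\xi_n=\langle\xi,x\rangle$,
\[
\sum_n x_nT_np=p(x)-\frac{1}{\mathrm{vol}(S^{N-1})}\int_{S^{N-1}}p(\sigma_\xi x)\,d\xi .
\]
On the other hand, $\sum_nx_nc_m\partial_np=c_mmp$ by Euler's identity. By Funk--Hecke, the average equals $\mu_mp(x)$, where $\mu_m$ is the normalized integral of $\widetilde C^{(\frac{N-2}{2})}_m(1-2t^2)$ against $(1-t^2)^{\frac{N-3}{2}}dt$; this uses $\langle x,\sigma_\xi x\rangle=|x|^2(1-2\langle\xi,x/|x|\rangle^2)$.

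The main obstacle is proving $1-\mu_m=m/\lambda_{N,m}$. As a sanity check, $m=1$ gives $1-\mu_1=2/N=1/\lambda_{N,1}$. For general $m$, I would first try a generating-function computation using $(1-2sr+r^2)^{-\nu}$. As a fallback, I would compare with the case $p=(x_1+ix_2)^m$ and compute the integral in coordinates.

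\textbf{Step 4: the first formula and extension.} The first expression follows from the second by the change of variables $y=\sigma_\xi x$, i.e.\ $\xi=\pm(x-y)/|x-y|$, which is a two-to-one cover. Using $\langle\xi,x\rangle=|x-y|/2$ and computing the Jacobian of $\xi\mapsto\sigma_\xi x$ produces the kernel $(x_n-y_n)/|x-y|^N$ and turns the coefficient $b$ into $2b$.

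Finally, both sides are continuous on $\mathcal S(\mathbb R^N)$. For the right-hand side, the difference quotient is controlled by first derivatives of $f$, uniformly near $\langle\xi,x\rangle=0$. For the left-hand side, continuity on $W_{b,smooth}$ holds by construction. Since the two sides agree on the dense subspace $W_{b,alg}$, they agree on all of $\mathcal S(\mathbb R^N)$.
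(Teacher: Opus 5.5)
Your plan is sound. Step~1 is the same as the paper's: both evaluate $[\mathcal R,x_n]$ on $f(|x|^2)p$ through the splitting $x_np=h_{m+1}+\frac{|x|^2}{2\lambda_{N,m}}\partial_np$. Your value $\frac{|x|^2}{\lambda_{N,m}}f\,\partial_np$ is the correct one; the paper's intermediate display for $\frac{1}{2|x|^2}[\mathcal R,x_n]$ carries a spurious factor $2$, though its final formula agrees with yours.

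The genuine difference is how you prove $T_np=\lambda_{N,m}^{-1}\partial_np$. The paper starts from the kernel identity $\frac{1}{\mathrm{vol}(S^{N-1})}\int_{S^{N-1}}K_{\frac{N-2}{2}}(x,\eta)\bigl(p(\eta)-p(x)\bigr)\,d\eta=\lambda_{N,m}^{-1}p(x)$ for $|x|\le1$, which comes from the Gegenbauer generating function. It differentiates this in $x_n$ to get the $|x-y|^{-N}$ form on the unit sphere, and only then converts to the reflection form via its change-of-variables lemma for $\xi\mapsto\sigma_\xi\omega$. You instead work with the reflection form directly:
\begin{itemize}
\item divisibility by $\langle\xi,x\rangle$ gives polynomiality;
\item $O(N)$-equivariance, together with multiplicity one of $\mathcal H^m$ in $\mathbb C^N\otimes\mathcal P^{m-1}$, forces $T=c_m\nabla$;
\item contracting with $x$ reduces $c_m$ to one scalar spherical average.
\end{itemize}
The $|x-y|^{-N}$ form then comes from the same change of variables run backwards. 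What each route buys:
\begin{itemize}
\item Yours explains structurally why the nonlocal term is a multiple of $\nabla p$.
\item Yours avoids differentiating a singular kernel identity up to the sphere.
\item Yours treats $N=2$ uniformly; there $\nu=0$, and the paper's $K_\nu$ would need to be replaced by a logarithmic kernel.
\item The paper's reads the constant off a generating function with no representation-theoretic input.
\end{itemize}

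The step you leave open closes along your fallback. The average $p\mapsto\frac{1}{\mathrm{vol}(S^{N-1})}\int p(\sigma_\xi x)\,d\xi$ acts on $\mathcal H^m$ by the scalar $\mu_m$. So take $p=(x_1+ix_2)^m$, which is harmonic for $N\ge2$, and $x=e_1$. Then $p(e_1)=1$ and $p(\sigma_\xi e_1)=\bigl(1-2\xi_1(\xi_1+i\xi_2)\bigr)^m$.

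Write $z=\xi_1+i\xi_2$ and $\xi_1=\frac{z+\bar z}{2}$. Invariance under rotations of the $(x_1,x_2)$-plane kills every monomial $z^a\bar z^c$ with $a\ne c$. Hence the sphere average of $\xi_1^kz^k$ is $2^{-k}$ times the average of $|z|^{2k}$. That average is $k!/(N/2)_k$, because $|z|^2=\xi_1^2+\xi_2^2$ has the $\mathrm{Beta}(1,\frac{N-2}{2})$ law on $S^{N-1}$. Therefore, by Chu--Vandermonde,
\[
\mu_m=\sum_{k=0}^{m}\frac{(-m)_k\,(1)_k}{(N/2)_k\,k!}={}_2F_1\!\left(-m,1;\tfrac{N}{2};1\right)=\frac{(N/2-1)_m}{(N/2)_m}=\frac{(N-2)/2}{\lambda_{N,m}}.
\]
This gives $1-\mu_m=m/\lambda_{N,m}$ and $c_m=\lambda_{N,m}^{-1}$, as needed.

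One caution for Step~4: make the normalization of $dy$ explicit when you write out the Jacobian. The pushforward of $d\xi$ under $\xi\mapsto\sigma_\xi x$ is $\bigl(|x|/|x-y|\bigr)^{N-2}d\mu$, where $d\mu$ denotes $d\xi$ transported to $\{|y|=|x|\}$. So the first expression holds with $dy=|x|^{N-2}d\mu$, i.e.\ surface measure on $\{|y|=|x|\}$ divided by $|x|$.

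With plain surface measure, the integral term on $f=p\in\mathcal H^m$ is homogeneous of degree $m$, one more than $\partial_np$. For $N=1$ it would give $b\,\mathrm{sgn}(x)(f(x)-f(-x))$ instead of $b\frac{f(x)-f(-x)}{x}$. The paper's proof only establishes the identity on the unit sphere and leaves this normalization implicit as well.
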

	
	\vspace{4pt}
	\begin{rmk}[Analogy with Dunkl operators]
		Let $\Phi$ be a root system, $W$ its reflection group, and $k$ a $W$-invariant function on $\Phi$.
		The Dunkl differential-difference operator~\cite{MR951883} is defined by
		\[
		T_{n}f(x)
		=
		\frac{\partial f}{\partial x_{n}}(x)
		+\frac{1}{2}\sum_{\alpha\in\Phi}
		k(\alpha)\,\alpha_{n}\,
		\frac{f(x)-f(\sigma_{\alpha}(x))}{\langle \alpha,x\rangle}.
		\]
		
		The operator $D_{b,n}$ appears to be a smooth analogue of the Dunkl operator,
		corresponding formally to the case where $W=O(N)$ and $\Phi=S^{N-1}$.
	\end{rmk}
	
	\vspace{6pt}
	
	\begin{rmk}[The case \(N=1\)]
		When $N=1$, we have $S^{N-1}=S^0=\{\pm1\}$ and $O(N)=O(1)=\{\pm1\}$, so the spherical integral in Theorem~\ref{explicit} reduces to a two-point average. Consequently,
		\[
		D_{b,1}f(x)=\frac{d}{dx}f(x)+b\,\frac{f(x)-f(-x)}{x},
		\]
		which is the Dunkl operator for the reflection group $\mathbb Z/2\mathbb Z$.
	\end{rmk}
	
	\vspace{12pt}

	\begin{proof}[Proof of Theorem~\ref*{explicit}]
		
		We first establish two auxiliary lemmas. 
		The first rewrites spherical integration in terms of reflections, and the second computes the resulting spherical integral on spherical harmonics.
		\vspace{6pt}

	\begin{lmm}\label{inte}
		For every integrable function \(f\) on \(S^{N-1}\) and for any fixed \(\omega \in S^{N-1}\),
		\[
		\int_{S^{N-1}} f(\mu)\,d\mu
		=
		\int_{S^{N-1}} f(\sigma_\xi(\omega))\,|2\langle \xi,\omega\rangle|^{N-2}\,d\xi .
		\]
		Here, \(d\mu\) and \(d\xi\) denote the \(O(N)\)-invariant measure on \(S^{N-1}\), and $\sigma_\xi(x)=x-2\langle x,\xi\rangle \xi$ denotes the reflection.
	\end{lmm}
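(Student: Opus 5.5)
The plan is to treat $\xi\mapsto \sigma_\xi(\omega)$ as a map from $S^{N-1}$ onto $S^{N-1}$ and compute its Jacobian. First, $\sigma_\xi=\sigma_{-\xi}$, so the map is two-to-one; we will see that the weight $|2\langle\xi,\omega\rangle|^{N-2}$ corrects both for this and for the distortion of the measure. By $O(N)$-invariance of both sides we may take $\omega=e_N$. Writing $c=\langle\xi,\omega\rangle$, we have $\langle\sigma_\xi(\omega),\omega\rangle=1-2c^2$. So the map sends the latitude $\{\langle\xi,\omega\rangle=c\}$ to the latitude $\{\langle\mu,\omega\rangle=1-2c^2\}$. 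It is also equivariant under the stabilizer $O(N-1)$ of $\omega$, since $\sigma_{k\xi}(\omega)=k\sigma_\xi(\omega)$ for $k\omega=\omega$.

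Step 1 is the reduction to radial functions. Both sides define $O(N-1)$-invariant measures on $S^{N-1}$: the pushforward of $|2c|^{N-2}d\xi$, and $d\mu$. Averaging $f$ over $O(N-1)$ changes neither side, so it suffices to treat zonal $f(\mu)=g(\langle\mu,\omega\rangle)$. For these we use the standard formula $d\mu=\mathrm{vol}(S^{N-2})(1-s^2)^{\frac{N-3}{2}}ds$ for the distribution of $s=\langle\mu,\omega\rangle$.

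Step 2 is the one-dimensional check. The right-hand side becomes
\[
\mathrm{vol}(S^{N-2})\int_{-1}^1 g(1-2c^2)\,|2c|^{N-2}(1-c^2)^{\frac{N-3}{2}}\,dc .
\]
By symmetry in $c$, this is twice the integral over $[0,1]$. Substituting $s=1-2c^2$ gives $ds=-4c\,dc$, $1-s^2=4c^2(1-c^2)$, and $(1-s^2)^{\frac{N-3}{2}}=(2c)^{N-3}(1-c^2)^{\frac{N-3}{2}}$. Hence
\[
\int_{-1}^1 g(s)(1-s^2)^{\frac{N-3}{2}}ds=\int_0^1 g(1-2c^2)(2c)^{N-3}(1-c^2)^{\frac{N-3}{2}}\,4c\,dc=2\int_0^1 g(1-2c^2)(2c)^{N-2}(1-c^2)^{\frac{N-3}{2}}dc,
\]
which matches the right-hand side exactly. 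Integrability of $f$ is then used to pass from nonnegative or continuous $f$ to all integrable $f$ via the monotone class argument, since both sides are positive measures once they agree on zonal functions after averaging.

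The main obstacle is justifying Step 1 rigorously: that averaging over $O(N-1)$ commutes with the pushforward. This follows from equivariance together with the invariance of $d\xi$ and of the weight $|\langle\xi,\omega\rangle|$ under $O(N-1)$, so I expect it to be short.

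The case $N=1$ can be checked directly: $S^0=\{\pm1\}$, $\sigma_\xi(\omega)=-\omega$, and the weight is $|2\xi\omega|^{-1}=1/2$ for each of the two points. The right-hand side is then $f(-\omega)$. The left-hand side with counting measure is $f(\omega)+f(-\omega)$, so the identity fails for $N=1$ unless one restricts to $N\ge2$, or to even functions. I would note that the lemma is intended for $N\ge 2$, where the formula $d\mu\propto(1-s^2)^{\frac{N-3}{2}}ds$ is valid.
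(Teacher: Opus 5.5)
Your proof is correct for $N\ge 2$. It differs from the paper's mainly in how the directions transverse to $\omega$ are handled.

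The paper splits $S^{N-1}$ into the open hemispheres $\pm\langle\xi,\omega\rangle>0$ and writes $\xi=\varepsilon\omega\cos\theta+\eta\sin\theta$ with $\eta\in S^{N-1}\cap\omega^{\perp}$. It then computes $\sigma_\xi(\omega)=-\omega\cos 2\theta-\varepsilon\eta\sin 2\theta$. In these coordinates the map $\xi\mapsto\sigma_\xi(\omega)$ is a product map: angle doubling $\phi=2\theta$ in the polar variable, and the isometry $\eta\mapsto-\varepsilon\eta$ on the transverse sphere. The weight $(2\cos\theta)^{N-2}$ converts $(\sin\theta)^{N-2}d\theta$ into $\tfrac12(\sin\phi)^{N-2}d\phi$. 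This handles arbitrary $f$ in a single change of variables, with no averaging.

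Your substitution $s=1-2c^2$ is the same angle doubling in other variables ($c=\cos\theta$, $s=-\cos\phi$). The difference is that you eliminate the transverse variable through the equivariance $\sigma_{k\xi}(\omega)=k\sigma_\xi(\omega)$ for $k\in\mathrm{Stab}(\omega)$ and a reduction to zonal functions. This costs you the extra Step 1, which works as you sketch it: both measures are $O(N-1)$-invariant, Tonelli justifies the averaging, and $O(N-1)$ acts transitively on each latitude once $N\ge 2$. In return, it makes transparent why the weight is $|2\langle\xi,\omega\rangle|^{N-2}$: it is exactly the ratio of the two latitude densities. You also never need to check that the map is a diffeomorphism on each hemisphere.

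Your observation about $N=1$ is correct. There the two sides are $f(\omega)+f(-\omega)$ and $f(-\omega)$. The paper's parametrization tacitly assumes $S^{N-2}_\omega\neq\emptyset$, i.e.\ $N\ge2$. This is harmless for the paper, because the lemma is used only in Lemma~\ref{sph}. There it is applied to an integrand that, with the natural convention, vanishes at $\mu=\omega$, and for such functions the $N=1$ identity does hold.
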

	\vspace{0pt}
	\begin{proof}
		Fix $\omega\in S^{N-1}$ and consider the hemispheres
		$S^{N-1}_{\pm,\omega}:=\{\xi\in S^{N-1}\mid \pm\langle\xi,\omega\rangle>0\}$. Then 
		$S^{N-1}=S^{N-1}_{+,\omega}\sqcup S^{N-1}_{-,\omega}\sqcup\{\langle\xi,\omega\rangle=0\}. $
		We note that the equator has measure $0$.
		For $\varepsilon=\pm1$, $\xi\in S^{N-1}_{\varepsilon,\omega}$ can be written uniquely as
		$ \xi=\varepsilon\omega\cos\theta+\eta\sin\theta
		\,
		\left(0<\theta<\frac{\pi}{2},\,
		\eta\in S^{N-2}_\omega:=S^{N-1}\cap\omega^\perp \right). $
		In these coordinates
		$d\xi=(\sin\theta)^{N-2}d\theta d\eta$,
		$\langle\xi,\omega\rangle=\varepsilon\cos\theta$, and
		$\sigma_\xi(\omega)=-\omega\cos(2\theta)-\varepsilon\eta\sin(2\theta)$.
		
		Hence
		\begin{align*}
			\int_{S^{N-1}_{\varepsilon,\omega}}
			f(\sigma_\xi(\omega))|2\langle\xi,\omega\rangle|^{N-2}d\xi
			=
			\int_{S^{N-2}_\omega}\int_0^{\pi/2}
			f(-\omega\cos(2\theta)-\varepsilon\eta\sin(2\theta))
			(2\cos\theta)^{N-2}(\sin\theta)^{N-2}d\theta d\eta .
		\end{align*}
		
		Setting $\phi=2\theta$ yields
		\[
		\frac12
		\int_{S^{N-2}_\omega}\int_0^\pi
		f(-\omega\cos\phi-\varepsilon\eta\sin\phi)
		(\sin\phi)^{N-2}d\phi d\eta
		=
		\frac12\int_{S^{N-1}}f(\mu)d\mu .
		\]
		
		Summing over $\varepsilon=\pm1$ gives the result.
	\end{proof}

	\begin{lmm}\label{sph}
		Let $p$ be a spherical harmonic of degree $m$. Then, for $\omega\in S^{N-1}$,
		\begin{align*}
			\MoveEqLeft \frac{2}{\mathrm{vol}(S^{N-1})}\int_{S^{N-1}}
			\frac{\omega_n-\eta_n}{|\omega-\eta|^N}\bigl(p(\omega)-p(\eta)\bigr)\,d\eta \\[-5pt]
			& = \frac{1}{\mathrm{vol}(S^{N-1})}\int_{S^{N-1}}
			\xi_{n}\frac{p(\omega)-p(\sigma_{\xi}(\omega))}{\langle \omega,\xi\rangle }\,d\xi
			=
			\begin{dcases}
				0,& m=0,\\[4pt]
				\frac{1}{\lambda_{N,m}}\frac{\partial p}{\partial x_n}(\omega),& m\ge1.
			\end{dcases}
		\end{align*}
		Here, $\sigma_\xi(x)=x-2\langle x,\xi\rangle \xi$ denotes the reflection and $\lambda_{N,m}=m+\frac{N-2}{2}$.
	\end{lmm}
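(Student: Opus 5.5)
The plan is to prove the two equalities separately. The first comes from the change of variables in Lemma~\ref{inte}. The second I would get by a rotation-equivariance and Funk--Hecke style argument, reducing everything to a single scalar computed on one test function.

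\textbf{First equality.} Apply Lemma~\ref{inte} to $g(\eta)=\frac{\omega_n-\eta_n}{|\omega-\eta|^N}(p(\omega)-p(\eta))$. With $\eta=\sigma_\xi(\omega)=\omega-2\langle\omega,\xi\rangle\xi$ we have $\omega-\eta=2\langle\omega,\xi\rangle\xi$, so $|\omega-\eta|^N=|2\langle\omega,\xi\rangle|^N$. Multiplying by the Jacobian factor $|2\langle\xi,\omega\rangle|^{N-2}$ gives the integrand
\[
\frac{2\langle\omega,\xi\rangle\xi_n}{|2\langle\omega,\xi\rangle|^{2}}\bigl(p(\omega)-p(\sigma_\xi\omega)\bigr)=\frac{\xi_n}{2\langle\omega,\xi\rangle}\bigl(p(\omega)-p(\sigma_\xi\omega)\bigr).
\]
Multiplying by the prefactor $2/\mathrm{vol}(S^{N-1})$ yields exactly the middle expression. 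For integrability, note that $p(\omega)-p(\sigma_\xi\omega)$ vanishes to first order in $\langle\omega,\xi\rangle$, so the integrand is bounded. This also justifies applying the lemma.

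\textbf{Second equality.} Define the operator $T$ on functions on $S^{N-1}$ (extended homogeneously) by the middle expression, and form the vector-valued operator $\vec T=(T_1,\dots,T_N)$. Both the middle expression and $\frac{1}{\lambda_{N,m}}\nabla p$ are $O(N)$-equivariant maps $\mathcal H^m\to \mathbb C^N\otimes C(S^{N-1})$. More concretely, I would use the first (kernel) form: $\frac{\omega-\eta}{|\omega-\eta|^N}$ is, up to the factor $-\frac{1}{N-2}$ when $N\ge3$, the gradient in $\omega$ of the Newtonian kernel $|\omega-\eta|^{2-N}$, restricted to the sphere.

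The cleanest route is to decompose into tangential and radial parts and use Funk--Hecke. The integral $\int \frac{p(\omega)-p(\eta)}{|\omega-\eta|^{N}}(\omega-\eta)\,d\eta$ splits as $\omega\int\frac{(1-\langle\omega,\eta\rangle)(p(\omega)-p(\eta))}{|\omega-\eta|^N}d\eta$ minus a term carrying the tangential components of $\eta$. For the scalar pieces, $\int K(\langle\omega,\eta\rangle)(p(\omega)-p(\eta))\,d\eta=(\mu_0-\mu_m)p(\omega)$ by Funk--Hecke. For the vector piece, I would write $\eta_n$ times a zonal kernel and use the recurrence $t\,\widetilde C_m=$ combination of $\widetilde C_{m\pm1}$ together with the decomposition $x_np=(x_np-\frac{|x|^2}{2\lambda_{N,m}}\partial_np)+\frac{|x|^2}{2\lambda_{N,m}}\partial_np$ used in Proposition~\ref{act2}. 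The result is a combination $\alpha_m\,\omega_np(\omega)+\beta_m\,\partial_np(\omega)$.

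\textbf{Identifying the constants.} The gradient-type output must equal $\frac{1}{\lambda_{N,m}}\partial_np$: its radial part must cancel, since the left side is visibly tangential after the symmetrization, and there is a single tangential constant. I would fix that constant by a test case rather than by evaluating the Funk--Hecke integrals in general. Take $p(x)=\langle x,e\rangle^{m}$ harmonic projections, or simply choose $\omega=e_1$ and $p$ the zonal harmonic. Alternatively, use the middle (reflection) form with the zonal function $p=\widetilde C_m(\langle\cdot,\zeta\rangle)$ and compute $\sum_n\omega_n T_n$ and the divergence. A slicker check is $\sum_n T_n(x_n q)$ type identities. For $m=0$ the claim is trivially $0$.

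\textbf{Main obstacle.} The hard step is the exact constant $1/\lambda_{N,m}$. I would try the generating function first: apply the operator to $p_\zeta(x)=\langle x,\zeta\rangle^m$ with $\zeta$ isotropic ($\langle\zeta,\zeta\rangle=0$), which is harmonic. Using the middle form,
\[
p_\zeta(\omega)-p_\zeta(\sigma_\xi\omega)=\langle\omega,\zeta\rangle^m-\bigl(\langle\omega,\zeta\rangle-2\langle\omega,\xi\rangle\langle\xi,\zeta\rangle\bigr)^m,
\]
and dividing by $\langle\omega,\xi\rangle$ gives a polynomial in $\xi$. Expanding binomially and integrating the monomials $\xi_n\langle\omega,\xi\rangle^{k-1}\langle\xi,\zeta\rangle^{k}$ against $d\xi$ with the standard moments of the sphere, where isotropy kills most Wick pairings, reduces the result to a sum over $k$. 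That sum should telescope to $\frac{m}{\lambda_{N,m}}\zeta_n\langle\omega,\zeta\rangle^{m-1}\cdot\mathrm{vol}(S^{N-1})$. Since the $p_\zeta$ span $\mathcal H^m$, this proves the lemma.
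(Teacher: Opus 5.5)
Your first equality is exactly the paper's final step: Lemma~\ref{inte} with $\omega-\sigma_\xi(\omega)=2\langle\omega,\xi\rangle\xi$. Your bookkeeping of the factor $|2\langle\xi,\omega\rangle|^{N-2}$ is right.

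For the second equality, the route you finally commit to is correct and genuinely different from the paper's: test the reflection form on $p_\zeta(x)=\langle x,\zeta\rangle^m$ with $\langle\zeta,\zeta\rangle=0$. You should, however, carry out the sum rather than say it ``should telescope''. Isotropy forces each of the $k$ copies of $\zeta$ to pair with $e_n$ or with one of the $k-1$ copies of $\omega$, so
\[
\int_{S^{N-1}}\xi_n\langle\omega,\xi\rangle^{k-1}\langle\xi,\zeta\rangle^{k}\,d\xi=\frac{k!\,\mathrm{vol}(S^{N-1})}{2^k(N/2)_k}\,\zeta_n\langle\omega,\zeta\rangle^{k-1}.
\]
The middle expression therefore equals $-\zeta_n\langle\omega,\zeta\rangle^{m-1}\sum_{k=1}^m(-1)^k\binom{m}{k}\frac{k!}{(N/2)_k}$. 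By Chu--Vandermonde,
\[
\sum_{k=0}^m(-1)^k\binom{m}{k}\frac{k!}{(N/2)_k}={}_2F_1(-m,1;\tfrac N2;1)=\frac{(N/2-1)_m}{(N/2)_m}=\frac{N-2}{2\lambda_{N,m}},
\]
and the Pochhammer ratio is where the telescoping actually happens. Hence the sum from $k=1$ is $-m/\lambda_{N,m}$, and you obtain $\frac{m}{\lambda_{N,m}}\zeta_n\langle\omega,\zeta\rangle^{m-1}=\frac{1}{\lambda_{N,m}}\partial_n p_\zeta(\omega)$, as claimed. For $N=1$ there are no nonzero isotropic vectors, but only $m\le 1$ occurs there, and the case $m=1$ needs no isotropy.

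Discard the Funk--Hecke/equivariance paragraph. Its claim that the left side is ``visibly tangential'', so the radial part must cancel, is false and would contradict the lemma itself. Contracting the reflection form with $\omega$ gives $\frac{1}{\mathrm{vol}(S^{N-1})}\int_{S^{N-1}}\bigl(p(\omega)-p(\sigma_\xi(\omega))\bigr)\,d\xi=\frac{m}{\lambda_{N,m}}p(\omega)$. This matches $\langle\omega,\nabla p(\omega)\rangle=m\,p(\omega)$. The right-hand side is the ambient derivative of the homogeneous extension, not a tangential gradient.

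The paper proceeds in the opposite direction. It starts from the scalar Funk--Hecke identity for the Newtonian kernel,
\[
\frac{1}{\mathrm{vol}(S^{N-1})}\int_{S^{N-1}}K_{\frac{N-2}{2}}(x,\eta)\bigl(p(\eta)-p(x)\bigr)\,d\eta=\frac{1}{\lambda_{N,m}}p(x)\qquad(|x|\le 1),
\]
where $K_\nu(x,\eta)=\frac1\nu(|x-\eta|^{-2\nu}-1)$ and the identity comes from the Gegenbauer generating function. It then differentiates in $x_n$. Since $\partial_{x_n}K_{\frac{N-2}{2}}=-2\frac{x_n-\eta_n}{|x-\eta|^N}$ and $\int K\,d\eta=0$, this gives the kernel form at once, and Lemma~\ref{inte} converts it to the reflection form.

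The paper thus obtains the vector identity as the gradient of a scalar one and reads off $1/\lambda_{N,m}$ from the Gegenbauer eigenvalue. The price is a tacit justification for differentiating up to $|x|=1$, where the kernel is singular. Your route is purely algebraic: for polynomial $p$ the reflection-form integrand is a polynomial in $\xi$, so no singular integrals or boundary limits arise. Its cost is the spanning fact for the $p_\zeta$ and a hypergeometric summation. Your aside that $\frac{\omega-\eta}{|\omega-\eta|^N}$ is a gradient of the Newtonian kernel is the seed of the paper's argument; what it lacked was extending the scalar identity into the ball before differentiating.
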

	\vspace{12pt}
	
	\begin{proof}
		We set $K_\nu(x,\omega):=\frac{1}{\nu}\left(|x-\omega|^{-2\nu}-1\right)$. 
		By the theory of the Poisson kernel, spherical harmonics, and Gegenbauer
		polynomials,
		\[
		\frac{1}{\mathrm{vol}(S^{N-1})}\int_{S^{N-1}}
		K_{\frac{N-2}{2}}(x,\eta)\bigl(p(\eta)-p(x)\bigr)\,d\eta
		=
		\begin{dcases}
			0,& m=0,\\[4pt]
			\frac{1}{\lambda_{N,m}}p(x),& m\ge1,
		\end{dcases}
		\]
		for $|x|\le 1$, where $\lambda_{N,m}=m+\frac{N-2}{2}$.
		Differentiating with respect to $x_n$, we obtain
		\[
		\frac{2}{\mathrm{vol}(S^{N-1})}\int_{S^{N-1}}
		\frac{\omega_n-\eta_n}{|\omega-\eta|^N}\bigl(p(\omega)-p(\eta)\bigr)\,d\eta
		=
		\begin{dcases}
			0,& m=0,\\[4pt]
			\frac{1}{\lambda_{N,m}}\frac{\partial p}{\partial x_n}(\omega),& m\ge1.
		\end{dcases}
		\]
		
		Applying Lemma~\ref{inte} to the integral with respect to $\eta$, we obtain the claim.
	\end{proof}
	
	\vspace{12pt}
	
		We now derive the explicit formula with these lemmas in hand.
		
		\vspace{6pt}
		First, we compute $D_{b,n}F(x)$$\hspace{4pt}=\hspace{4pt}$$\frac{1}{2}\left[H_{b}, x_{n}\right]F(x)$ for  functions $F(x)$ written as $F(x)= f(|x|^{2})\,p(x)$ by an $m$-th harmonic polynomial $p(x)$ and $O(N)$-invariant Schwartz function $f(|x|^{2})$. The space spanned by such functions contains $W_{b,alg}$ and is contained in $W_{b,smooth}$.
		We recall that
		\[ H_{b}=\Delta + \frac{2b}{|x|^{2}}\mathcal{R},\qquad \mathcal{R}\Bigl(f\bigl(|x|^{2}\bigr)p(x)\Bigr)= E\Bigl(f\bigl(|x|^{2}\bigr)\Bigr)p(x)\]
		
		Decomposing as
		$x_{n}\,p(x) = \left(x_{n}\,p(x)- \frac{1}{2\lambda_{N,m}}|x|^2\frac{\partial p}{\partial x_{n}}(x)\right) + \frac{1}{2\lambda_{N,m}}|x|^2\frac{\partial p}{\partial x_{n}}(x) $,
		where $x_{n}\,p(x)- \frac{1}{2\lambda_{N,m}}|x|^2\frac{\partial p}{\partial x_{n}}(x) $ is an $(m+1)$-th harmonic polynomial and 
		$\frac{\partial p}{\partial x_{n}}(x)$ is an $(m-1)$-th harmonic polynomial, we obtain
	
			\[\frac{1}{2|x|^{2}}\Bigl\lbrack \mathcal{R},x_{n} \Bigr\rbrack f\bigl(|x|^2\bigr)p(x) = \frac{1}{\lambda_{N,m}}f\bigl(|x|^{2}\bigr)\frac{\partial p}{\partial x_{n}}(x). \]
			
		Applying Lemma~\ref{sph}, this equals 
			\[\frac{2}{\mathrm{vol}(S^{N-1})}\int_{|x|=|y|}
				\frac{x_n-y_n}{|x-y|^N}\bigl(F(x)-F(y)\bigr)\,dy  = \frac{1}{\mathrm{vol}(S^{N-1})}\int_{S^{N-1}}
				\xi_{n}\frac{F(x)-F(\sigma_{\xi}(x))}{\langle x,\xi\rangle }\,d\xi. \]
		This proves the formula for $F(x) = f(|x|^{2})p(x)$. 
		Since the space spanned by such functions is dense in $W_{b,smooth}$ and the operators involved are continuous on this space (see Remark~\ref{conti}), the formula extends to $f\in \mathcal S(\mathbb R^N)$.
	\end{proof}
	
	\vspace{12pt}
	
	\subsection{Generalized translations}\label{3.4}
	In this subsection, we consider the one-parameter group generated by $D_{b,n}$, which we call the generalized translation associated with $D_{b,n}$.
	\vspace{6pt}
	
	\begin{dfn}[Generalized translations $e^{tD_{b,n}}$]\label{tra}
		By Corollary~\ref{adjoint2}, $D_{b,n}$ is essentially skew-adjoint. The corresponding unitary one-parameter group $e^{tD_{b,n}}$ is called a generalized translation.
	\end{dfn}
	
	\begin{exm}[The case \(N=1\)]
		When $N=1$, the generalized translation admits an explicit formula in terms of Legendre functions; see \cite{paper2}. 
		We note that the operator treated in \cite{paper2} is
		$|x|^b D_{b,1}|x|^{-b}$, so the normalization is slightly different from the present one.
	\end{exm}
	
	\vspace{12pt}
	We next study a basic qualitative property of this generalized translation, an analogue of finite propagation. We follow the energy method for the wave equation; see \cite[Section 2.4, Theorem 6]{MR1625845} for a reference.
	
	\vspace{6pt}
	By Theorem~\ref{explicit}, the operator $D_{b,n}$ extends naturally to $C^\infty(\mathbb R^N)$ via its explicit formula, and we shall use this realization in what follows.
	
	\begin{prp}[Finite propagation in the radial direction]\label{fin}
		Let $u(t,x)\in C^{\infty}(\mathbb{R}_{>0}\times\mathbb{R}^{N})$ satisfy the equation $\frac{\partial^2 u}{\partial t^2}(t,x) = D_{b,n}^{2}u(t,x)$. If $u(0,x)=0$ and $u_{t}(0,x)=0$ hold for $t_{0} < |x|<t_{1}$, then $u(t,x)=0$ for $t_{0}+t < |x|<t_{1}-t$.
	\end{prp}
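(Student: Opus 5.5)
The plan is to run the energy method for the wave equation from \cite[Section 2.4, Theorem 6]{MR1625845}, adapted to $D_{b,n}$ and the weighted measure $|x|^{2b}dx$. The structural fact that makes this work is that the non-local part of $D_{b,n}$ in Theorem~\ref{explicit} only integrates over the sphere $\{|y|=|x|\}$. Consequently, multiplication by a radial function commutes with that part. For a radial $\chi(|x|)$ one gets
\[
[D_{b,n},\chi(|x|)]=\chi'(|x|)\,\frac{x_n}{|x|},
\]
exactly as for $\partial/\partial x_n$. So in the radial variable $D_{b,n}$ behaves like a local first-order operator, which is why finite propagation is only claimed in the radial direction, on annuli.

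First I would fix $t$ and set $A(t):=\{t_0+t<|x|<t_1-t\}$. The energy is
\[
e(t):=\frac12\int_{A(t)}\Bigl(|u_t(t,x)|^2+|D_{b,n}u(t,x)|^2\Bigr)|x|^{2b}dx .
\]
The key tool is a Green-type identity on annuli: for smooth $v,w$ and radial $\chi$,
\[
\int \chi\bigl((D_{b,n}v)\bar w+v\,\overline{D_{b,n}w}\bigr)|x|^{2b}dx=-\int \chi'(|x|)\frac{x_n}{|x|}v\bar w\,|x|^{2b}dx .
\]
I would derive this from the skew-symmetry of $D_{b,n}$ on $L^2(\mathbb{R}^N,|x|^{2b}dx)$ (Corollary~\ref{adjoint2}) applied to $\chi v$ and $w$, together with the commutator formula above. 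Two extensions are needed here. One is from $\mathcal S(\mathbb{R}^N)$ to general smooth functions, which is harmless by localizing with a radial cutoff, since the nonlocal term respects spheres. The other is passing from smooth $\chi$ to the indicator of $A(t)$ by a limiting argument; this produces surface integrals over the two boundary spheres.

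Differentiating $e(t)$ then gives two contributions. The first is the boundary flux from the shrinking domain:
\[
-\int_{\partial A(t)}\tfrac12\bigl(|u_t|^2+|D_{b,n}u|^2\bigr)|x|^{2b}\,dS .
\]
The second is the interior term $\mathrm{Re}\int_{A(t)}\bigl(u_{tt}\bar u_t+D_{b,n}u\,\overline{D_{b,n}u_t}\bigr)|x|^{2b}dx$. I substitute $u_{tt}=D_{b,n}^2u$ and apply the Green identity with $v=D_{b,n}u$, $w=u_t$. The interior contributions cancel, leaving
\[
\mathrm{Re}\int_{\partial A(t)}\pm\frac{x_n}{|x|}\,D_{b,n}u\,\bar u_t\,|x|^{2b}dS .
\]
Since $|x_n/|x||\le1$, Cauchy–Schwarz bounds this by $\tfrac12\int_{\partial A(t)}(|u_t|^2+|D_{b,n}u|^2)|x|^{2b}dS$. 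Hence $e'(t)\le0$. The hypotheses give $e(0)=0$, because $u(0,\cdot)=0$ on the annulus $A(0)$ forces $D_{b,n}u(0,\cdot)=0$ there: the nonlocal term only sees the sphere $\{|y|=|x|\}\subset A(0)$. Therefore $e\equiv0$, so $u_t=0$ on the shrinking annuli, and $u=0$ there since $u(0,\cdot)=0$.

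The main obstacle I expect is justifying the Green identity rigorously near the singular kernel $(x_n-y_n)/|x-y|^N$. I would check directly that the nonlocal operator, restricted to each sphere, has the adjoint needed to cancel the weight term $2b\,x_n/|x|^2$ coming from integrating $\partial_n$ by parts against $|x|^{2b}$. Concretely, I would verify it on $f(|x|^2)p(x)$ via Lemma~\ref{sph}, and otherwise fall back on density together with Corollary~\ref{adjoint2}. A secondary concern is the boundary behavior as $|x|\to0$, but the annulus avoids the origin when $t_0\ge0$.
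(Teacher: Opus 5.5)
Your proposal is correct, and its skeleton is exactly the paper's proof: energy on the shrinking annuli, cancellation of the interior terms by a Green-type identity, and absorption of the boundary flux using $|x_n|/|x|\le 1$.

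The only difference is how you obtain the Green identity.

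\textbf{The paper's route.} Lemma~\ref{green}, proved in the appendix, works directly from the explicit formula of Theorem~\ref{explicit} on balls. Integrating $\partial_{x_n}$ by parts against $|x|^{2b}dx$ produces a term $-2b\,x_n/|x|^2$. The non-local part is symmetrized under $x\mapsto\sigma_\xi(x)$, with an $\varepsilon$-truncation, and Lemma~\ref{sphereint} yields the compensating $+2b\,x_n/|x|^2$. Finally a small ball around the singular weight at the origin is removed and shrunk.

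\textbf{Your route.} You take the skew-symmetry of $D_{b,n}$ on $\mathcal S(\mathbb R^N)$ with respect to $|x|^{2b}dx$ (which comes from Theorem~\ref{fou1}, as in Corollary~\ref{adjoint2}). You combine it with the radial commutator $[D_{b,n},\chi(|x|)]=\chi'(|x|)\,x_n/|x|$ and with radial cutoffs. This is valid for three reasons:
\begin{itemize}
\item $D_{b,n}(\chi v)$ vanishes wherever $\chi(|x|)=0$, and $D_{b,n}(\psi w)=D_{b,n}w$ wherever $\psi\equiv 1$, so reducing from smooth to compactly supported functions is legitimate.
\item Letting $\chi\to\mathbf 1_{A(t)}$ produces the two sphere terms, since $D_{b,n}$ preserves $C^\infty$.
\item Interior terms converge by dominated convergence.
\end{itemize}

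\textbf{Comparison.} Your argument is shorter and explains the cancellation structurally, with no principal-value or near-origin analysis. However, it relies on the representation-theoretic Fourier intertwining. The paper's computation is self-contained at the level of the explicit differential-difference operator.

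\textbf{Remarks on your write-up.} The \emph{main obstacle} you anticipate does not arise in your own argument: the weight term is already accounted for by global skew-symmetry. Even when $t_0+t\le 0$ (so the region is a ball), you simply take $\chi$ constant near $0$, and no inner boundary term appears. You also make explicit a point the paper leaves tacit: $e(0)=0$ requires $D_{b,n}u(0,\cdot)=0$ on $A(0)$. This holds because the non-local term at $x$ only sees the sphere $\{|y|=|x|\}\subset A(0)$.
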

	
	We need the following lemma.
	\begin{lmm}[A Green-type formula on balls]\label{green}
		Suppose $b>-\frac{N}{2}$. For $R>0$ and $F,G\in C^{\infty}(\mathbb R^N)$,
		\[
		\int_{B_R} (D_{b,n}F)(x)\,G(x)\,|x|^{2b}dx
		=
		-\int_{B_R} F(x)\,(D_{b,n}G)(x)\,|x|^{2b}dx
		+
		\int_{|x|=R}\frac{x_n}{|x|}F(x)G(x)\,|x|^{2b}d\omega .
		\]
		Here
		$B_R:=\{x\in \mathbb R^N:\ |x|\le R\}$
		denotes the closed ball of radius $R$.
	\end{lmm}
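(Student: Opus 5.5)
The plan is to split $D_{b,n}$ via Theorem~\ref{explicit} into its local part $\partial/\partial x_n$ and its nonlocal part
\[
(T_nF)(x):=\frac{2b}{\mathrm{vol}(S^{N-1})}\int_{|y|=|x|}\frac{x_n-y_n}{|x-y|^N}\bigl(F(x)-F(y)\bigr)\,dy,
\]
and to treat the two parts separately.

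For the local part, I apply the divergence theorem to the vector field $F\,G\,|x|^{2b}e_n$ on $B_R$. This gives
\[
\int_{B_R}\partial_n(FG)\,|x|^{2b}dx+\int_{B_R}FG\,\partial_n(|x|^{2b})\,dx=\int_{|x|=R}\frac{x_n}{|x|}FG\,|x|^{2b}\,d\omega ,
\]
which produces the boundary term in the statement. It also leaves the error term
\[
-\int_{B_R}2b\,\frac{x_n}{|x|^2}\,F G\,|x|^{2b}dx .
\]
When $b<0$ the weight is singular at the origin. To handle this, I first work on the annulus $\varepsilon\le|x|\le R$. The inner boundary term is $O(\varepsilon^{2b+N-1})\to0$, since $b>-N/2$ and $N\ge 1$ force $2b+N-1>-1$. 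I will check whether this exponent is actually positive; if it is not, I will instead rely on smoothness of $FG$ and use antisymmetry of $x_n$ over small spheres to kill the leading term. The error term is integrable because $|x|^{2b-1}$ is integrable near $0$ whenever $2b-1+N>0$; in the borderline range I again use that $x_n$ averages to zero over spheres.

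The key step is to show that the nonlocal part absorbs this error:
\[
\int_{B_R}(T_nF)\,G\,|x|^{2b}+\int_{B_R}F\,(T_nG)\,|x|^{2b}=2b\int_{B_R}\frac{x_n}{|x|^2}FG\,|x|^{2b}dx .
\]
Since $T_n$ acts only within each sphere $|x|=r$, I prove this one sphere at a time and then integrate in $r$.

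On a sphere, write $k(x,y)=\frac{x_n-y_n}{|x-y|^N}$, which is antisymmetric. Expanding,
\[
(T_nF)(x)\,G(x)+F(x)\,(T_nG)(x)=c\int k(x,y)\bigl[2F(x)G(x)-F(y)G(x)-F(x)G(y)\bigr]\,dy .
\]
Integrating over $x$ on the sphere, the term $\iint k\,[F(y)G(x)+F(x)G(y)]$ vanishes by antisymmetry of $k$, because the bracket is symmetric in $(x,y)$. What remains is
\[
2c\int F(x)G(x)\left(\int k(x,y)\,dy\right)dx .
\]
I then compute the inner integral from Lemma~\ref{sph} applied to $p=x_n$, which has degree $m=1$. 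The lemma gives, on the unit sphere,
\[
\frac{2}{\mathrm{vol}}\int\frac{\omega_n-\eta_n}{|\omega-\eta|^N}\,(\omega_n-\eta_n)\,d\eta=\frac{1}{\lambda_{N,1}} .
\]
This is not directly $\int k$, however; I actually need $\int k(\omega,\eta)\,d\eta$. For that I use the constant-function version together with $\int(\omega_n-\eta_n)|\omega-\eta|^{-N}d\eta$, computed from the Poisson-kernel identity $\partial_{x_n}\int K_{(N-2)/2}(x,\eta)\,d\eta$ at $x=\omega$. Rescaling to radius $r$, I expect $\frac{2b}{\mathrm{vol}}\int_{|y|=r}k(x,y)\,dy=b\,x_n/r^2$. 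Combined with the factor $2$ from the symmetrization, this yields exactly the needed $2b\,x_n/|x|^2$.

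I expect the main obstacle to be this step: the inner integral is only conditionally convergent, since the kernel is of order $|x-y|^{1-N}$ against a measure of dimension $N-1$. The antisymmetry cancellation must therefore be justified as a principal value, by excising $|x-y|<\delta$ symmetrically and letting $\delta\to0$. Pinning down the constant correctly, for example via the reflection form and Lemma~\ref{inte}, where $\xi_n/\langle\xi,x\rangle$ integrated gives $x_n\cdot(\text{const})/|x|^2$ by rotational covariance, is the delicate part. If the direct computation resists, I will fall back to a consistency check against $N=1$, where the identity reduces to $\int_{-R}^{R}\frac{b}{x}\bigl(2FG-F(-x)G-FG(-x)\bigr)|x|^{2b}=2b\int_{-R}^{R}\frac{FG}{x}|x|^{2b}$ by the substitution $x\mapsto -x$, to fix the constant.
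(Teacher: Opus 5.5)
Your plan is essentially the paper's proof. You split off $\partial/\partial x_n$ and integrate by parts on $\delta<|x|<R$, which produces the boundary term and $-2b\int FG\,x_n|x|^{2b-2}dx$. You then symmetrize the nonlocal part under the involution exchanging $x$ and $y=\sigma_\xi(x)$; the paper's substitution $x\mapsto\sigma_\xi(x)$ with truncation $|\langle\xi,x\rangle|>\varepsilon$ is exactly your symmetric excision $|x-y|>2\varepsilon$. Only $2\int FG$ times the principal value of the kernel survives, and it cancels that term. Finally you let $\delta\to0$ using $\int_{|x|=\delta}x_n\,d\omega=0$; this fallback is genuinely needed, since $2b+N-1$ can be negative.

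Two cautions on the constant.

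First, compute the principal value by the reflection and rotational-covariance route; this is Lemma~\ref{sphereint}, and contracting with $x_n$ fixes the constant as $\mathrm{vol}(S^{N-1})$. Do not differentiate the Poisson-kernel identity. Since $\int_{S^{N-1}}K_{\frac{N-2}{2}}(x,\eta)\,d\eta=0$ for $|x|\le1$, that route returns the interior limit $0$. The true value is $\mathrm{p.v.}\int_{S^{N-1}}\frac{\omega_n-\eta_n}{|\omega-\eta|^N}\,d\eta=\frac12\,\omega_n\,\mathrm{vol}(S^{N-1})$.

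Second, your value $b\,x_n/|x|^2$ holds for the $y$-form only if $dy$ on $\{|y|=r\}$ is $r^{N-2}$ times the unit-sphere measure. That is the normalization under which the two expressions in Theorem~\ref{explicit} agree. Ordinary surface measure gives $b\,x_n/|x|$, and the cancellation then fails. It is safest to run the symmetrization in the reflection form, as the paper does.
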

	
	\begin{proof}
		The proof is somewhat technical and is therefore deferred to Appendix~\ref{4.2}. 
		
		The argument uses the explicit formula in Theorem~\ref{explicit}, separating the differential part and the integral part. The differential part is handled by integration by parts, while the integral part is treated by a Fubini-type argument with some care near the singularities. The boundary term arises from the differential part.
	\end{proof}
	
	\begin{proof}[Proof of Proposition~\ref{fin}]
		Set
		$E(t):=\frac12\int_{t_0+t<|x|<t_1-t}\bigl(|u_t|^2+|D_{b,n}u|^2\bigr)\,|x|^{2b}dx$.
		Then
		\begin{align*}
			\MoveEqLeft E'(t) =
			\int_{t_0+t<|x|<t_1-t}(u_tu_{tt}+D_{b,n}u\,D_{b,n}u_t)\,|x|^{2b}dx \\
			&\quad
			-\frac12\int_{|x|=t_0+t}(|u_t|^2+|D_{b,n}u|^2)\,|x|^{2b}d\omega
			-\frac12\int_{|x|=t_1-t}(|u_t|^2+|D_{b,n}u|^2)\,|x|^{2b}d\omega .
		\end{align*}
		
		Applying Lemma~\ref{green} to the balls $B_{t_1-t}$ and $B_{t_0+t}$ 
		with
		$F=u_t(t,\cdot), G=D_{b,n}u(t,\cdot)$ and subtracting,
		we obtain
		\begin{align*}
			\MoveEqLeft\int_{t_0+t<|x|<t_1-t}D_{b,n}u\,D_{b,n}u_t\,|x|^{2b}dx \\
			& =
			-\int_{t_0+t<|x|<t_1-t}u_t\,D_{b,n}^2u\,|x|^{2b}dx \\
			&\quad
			+\int_{|x|=t_1-t}\frac{x_n}{|x|}u_t\,D_{b,n}u\,|x|^{2b}d\omega
			-\int_{|x|=t_0+t}\frac{x_n}{|x|}u_t\,D_{b,n}u\,|x|^{2b}d\omega .
		\end{align*}
		
		Hence
		\begin{align*}
			\MoveEqLeft E'(t) =
			\int_{t_0+t<|x|<t_1-t}u_t(u_{tt}-D_{b,n}^2u)\,|x|^{2b}dx \\
			&\quad
			-\int_{|x|=t_0+t}\frac{x_n}{|x|}u_t\,D_{b,n}u\,|x|^{2b}d\omega
			-\frac12\int_{|x|=t_0+t}(|u_t|^2+|D_{b,n}u|^2)\,|x|^{2b}d\omega \\
			&\quad
			+\int_{|x|=t_1-t}\frac{x_n}{|x|}u_t\,D_{b,n}u\,|x|^{2b}d\omega
			-\frac12\int_{|x|=t_1-t}(|u_t|^2+|D_{b,n}u|^2)\,|x|^{2b}d\omega .
		\end{align*}
		Since
		$u_{tt}=D_{b,n}^2u$, and $\left|\frac{x_n}{|x|}u_t\,D_{b,n}u\right|
		\le
		\frac12\bigl(|u_t|^2+|D_{b,n}u|^2\bigr)$, we get
		\[
		E'(t)\le0.
		\]
		Therefore
		\[
		0\le E(t)\le E(0)=0,
		\]
		and hence $E(t)\equiv0$. Thus \(u_t\equiv 0\) and \(D_{b,n}u\equiv 0\) in the region under consideration. Since \(u(0,x)=0\) there, it follows that \(u\equiv 0\). This proves the claim.
	\end{proof}
	
	\vspace{12pt}
	\begin{cor}[Finite propagation property for $e^{tD_{b,n}}$]\label{uni}
		Let $f, g \in \mathcal{S}(\mathbb{R}^{N})$. If $f(y)=g(y)$ for $|x|-|t|< |y| <|x|+|t|$, then 
		$$ e^{tD_{b,n}}f(x)=e^{tD_{b,n}}g(x).$$
	\end{cor}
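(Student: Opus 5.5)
By linearity it suffices to show that if $h:=f-g\in\mathcal S(\mathbb R^N)$ vanishes on the annulus $\{|x|-|t|<|y|<|x|+|t|\}$, then $e^{tD_{b,n}}h(x)=0$. Replacing $D_{b,n}$ by $-D_{b,n}$ if necessary, we may assume $t>0$. The plan is to feed $u(s,\cdot):=e^{sD_{b,n}}h$ into Proposition~\ref{fin}, with $t_0:=|x|-t$ and $t_1:=|x|+t$. These are the only choices to be made.

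\textbf{Step 1 (regularity and the wave equation).} Since $h\in\mathcal S(\mathbb R^N)=W_{b,smooth}$ by Proposition~\ref{smooth}, and $D_{b,n}$ preserves $W_{b,smooth}$ and is continuous in its Fr\'echet topology (Remark~\ref{conti}), the function $s\mapsto u(s)$ is a smooth curve in $W_{b,smooth}$. It satisfies $\partial_s u=D_{b,n}u$, hence $\partial_s^2u=D_{b,n}^2u$. We need joint smoothness of $u(s,x)$ in $(s,x)$ so that Proposition~\ref{fin} applies pointwise. Since the topology of Remark~\ref{conti} is equivalent to the Schwartz topology (both are the smooth-vector topology of $\Omega_b$, as in the proof of Proposition~\ref{smooth}), the derivatives $\partial_s^k\partial_x^\alpha u$ are continuous. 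Moreover, the operator $D_{b,n}$ acting in the Hilbert-space sense agrees with the explicit formula of Theorem~\ref{explicit}, which is the realization used in Proposition~\ref{fin}.

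\textbf{Step 2 (initial data).} We have $u(0,\cdot)=h$, which vanishes on $t_0<|y|<t_1$. Also $u_s(0,\cdot)=D_{b,n}h$. By Theorem~\ref{explicit}, $D_{b,n}h(y)$ involves $\partial_{y_n}h(y)$ and values of $h$ on the sphere $|z|=|y|$ only. Both vanish when $t_0<|y|<t_1$, because that region is a union of whole spheres and is open. Hence $u_s(0,\cdot)=0$ there.

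\textbf{Step 3 (conclusion and the main obstacle).} Proposition~\ref{fin} gives $u(s,y)=0$ for $t_0+s<|y|<t_1-s$. The main obstacle is that at the endpoint $s=t$ this region, $|x|<|y|<|x|$, is empty. We therefore apply the proposition at $s=t-\varepsilon$, which gives vanishing on $|x|-\varepsilon<|y|<|x|+\varepsilon$, a region containing $x$, so $u(t-\varepsilon,x)=0$. Letting $\varepsilon\to0$ and using continuity of $u$ in $s$ (Step 1) yields $u(t,x)=0$, that is, $e^{tD_{b,n}}f(x)=e^{tD_{b,n}}g(x)$.

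If $|x|<t$, then $t_0<0$ and the annulus is the ball $|y|<|x|+t$. Proposition~\ref{fin} is applied verbatim with $t_0<0$; its energy argument, via Lemma~\ref{green}, only uses the ball $B_{t_1-s}$ once $t_0+s<0$, since the inner sphere is then absent.
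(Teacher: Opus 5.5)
Your proposal is correct and follows the paper's proof: you set $h=f-g$ and $u(s)=e^{sD_{b,n}}h$, use Theorem~\ref{explicit} to see that $u_s(0)=D_{b,n}h$ also vanishes on the $O(N)$-invariant annulus, and invoke Proposition~\ref{fin}. Your $\varepsilon$-limit at $s=t$ (where the region $t_0+s<|y|<t_1-s$ becomes empty) and your sign reduction to $t>0$ supply details that the paper's one-line application leaves implicit.

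One justification needs repair. That $e^{sD_{b,n}}$ preserves $\mathcal S(\mathbb R^N)$ smoothly in $s$ does not follow merely from $D_{b,n}$ acting continuously on $\mathcal S(\mathbb R^N)$. It does follow from Theorem~\ref{fou1}, which gives $e^{sD_{b,n}}=\mathcal F_b^{-1}e^{isx_n}\mathcal F_b$. Also, $t=0$ must be excluded from the statement, since there the hypothesis is vacuous but the conclusion $f(x)=g(x)$ fails in general.
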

	
	\vspace{6pt}
	\begin{proof}
		Set $h:=f-g\in \mathcal S(\mathbb R^N)$, and define
		$u(s,y):=e^{sD_{b,n}}h(y)$.
		Then $u$ satisfies
		\[
		\frac{\partial^2 u}{\partial s^2}(s,y)=D_{b,n}^2u(s,y),
		\qquad
		u(0,y)=h(y),\qquad u_s(0,y)=D_{b,n}h(y).
		\]
		By assumption,
		$h(y)=0$
		when $|x|-|t|<|y|<|x|+|t|$.
		Since this is an $O(N)$-invariant open set, Theorem~\ref{explicit} implies
		$D_{b,n}h(y)=0$ when
		$|x|-|t|<|y|<|x|+|t|$.
		Hence Proposition~\ref{fin} gives
		$u(t,x)=0$.
		Therefore
		\[
		e^{tD_{b,n}}f(x)-e^{tD_{b,n}}g(x)
		=
		e^{tD_{b,n}}h(x)
		=
		u(t,x)
		=
		0,
		\]
		which proves the claim.
	\end{proof}
	
	\vspace{12pt}
	\begin{cor}[Extension to a one-parameter group on $C^\infty(\mathbb{R}^N)$]\label{ext}
		$e^{tD_{b,n}}$ extends naturally to a one-parameter group on $C^{\infty}(\mathbb{R}^{N})$.
	\end{cor}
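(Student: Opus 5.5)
The plan is to define the extension by localization, using Corollary~\ref{uni}. Fix $t\in\mathbb{R}$ and $F\in C^\infty(\mathbb{R}^N)$. For a point $x$, choose a radial cutoff $\chi\in C_c^\infty(\mathbb{R}^N)$, $O(N)$-invariant, with $\chi\equiv1$ on the closed ball $\{|y|\le |x|+|t|+1\}$, and set
\[
(e^{tD_{b,n}}F)(x):=\bigl(e^{tD_{b,n}}(\chi F)\bigr)(x).
\]
Since $\chi F\in C_c^\infty\subset\mathcal{S}(\mathbb{R}^N)$, the right-hand side makes sense. If $\chi'$ is another admissible cutoff, then $\chi F$ and $\chi' F$ agree on $\{|x|-|t|<|y|<|x|+|t|\}$, so Corollary~\ref{uni} shows that the definition is independent of $\chi$. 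Moreover, the same cutoff works for all points in a neighbourhood of $x$. Hence, locally, $e^{tD_{b,n}}F$ coincides with $e^{tD_{b,n}}(\chi F)$, which lies in $W_{b,smooth}=\mathcal{S}(\mathbb{R}^N)$ by Proposition~\ref{smooth} and the fact that $e^{tD_{b,n}}$ preserves smooth vectors. Indeed, $e^{tD_{b,n}}=\mathcal{F}_b^{-1}e^{itx_n}\mathcal{F}_b$ by Theorem~\ref{fou1}, and multiplication by $e^{itx_n}$ preserves $\mathcal{S}$. So the extension maps $C^\infty$ into $C^\infty$. By construction it agrees with the original operator on $\mathcal{S}(\mathbb{R}^N)$ when $F\in\mathcal{S}$: apply Corollary~\ref{uni} to $F$ and $\chi F$.

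Next comes the group law $e^{sD_{b,n}}e^{tD_{b,n}}F=e^{(s+t)D_{b,n}}F$. To evaluate the left side at $x$, we need $e^{tD_{b,n}}F$ only on $\{|y|<|x|+|s|+1\}$. There it equals $e^{tD_{b,n}}(\chi F)$ for a cutoff $\chi\equiv1$ on $\{|y|\le|x|+|s|+|t|+2\}$. A second cutoff $\psi$ is then needed to make $\psi\,e^{tD_{b,n}}(\chi F)$ agree with $e^{tD_{b,n}}(\chi F)$ on the relevant annulus. Corollary~\ref{uni} lets us drop $\psi$, and the group law on $\mathcal{S}$ gives $e^{(s+t)D_{b,n}}(\chi F)(x)$, which is the right side. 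The identity $e^{0\cdot D_{b,n}}=1$ is immediate.

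The main obstacle I expect is the careful bookkeeping of the domains of dependence: one must check that the radii for which Corollary~\ref{uni} is applied are consistent when $|x|<|t|$, where the annulus becomes a ball. Radial cutoffs handle this uniformly. A secondary point is continuity in the $C^\infty$ (Fr\'echet) topology. This follows because on each compact set the extension is given by $e^{tD_{b,n}}$ applied to $\chi F$. The map $F\mapsto\chi F$ is continuous from $C^\infty$ to $\mathcal{S}$, and $e^{tD_{b,n}}$ is continuous on $\mathcal{S}$ by Remark~\ref{conti}, being conjugate to multiplication by $e^{itx_n}$ under $\mathcal{F}_b$.
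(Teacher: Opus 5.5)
Your proposal is correct and follows the same route as the paper. You define $e^{tD_{b,n}}F(x)$ by replacing $F$ with a Schwartz function that agrees with it on $\{|x|-|t|<|y|<|x|+|t|\}$, obtain well-definedness from Corollary~\ref{uni}, and inherit the group law from $\mathcal{S}(\mathbb{R}^N)$. Your radial cutoffs, the explicit group-law bookkeeping, and the local argument that the image lies in $C^\infty(\mathbb{R}^N)$ (since $e^{tD_{b,n}}=\mathcal{F}_b^{-1}e^{itx_n}\mathcal{F}_b$ preserves $\mathcal{S}$) make explicit details the paper leaves implicit.
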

	
	\begin{proof}
		By Corollary~\ref{uni}, for each $x\in \mathbb R^N$ and $t\in \mathbb R$,
		the value $e^{tD_{b,n}}f(x)$ depends only on the restriction of $f$ to the annulus
		\[
			\{y\in \mathbb R^N:\ |x|-|t|<|y|<|x|+|t|\}.
		\]
		Hence, for any $f\in C^\infty(\mathbb R^N)$, we may choose
		$g\in \mathcal S(\mathbb R^N)$ such that
		$g(y)=f(y)$
		for $ |x|-|t|<|y|<|x|+|t|$,
		and define
		\[
		e^{tD_{b,n}}f(x):=e^{tD_{b,n}}g(x).
		\]
		This is well-defined by Corollary~\ref{uni}. The group property follows from that on $\mathcal S(\mathbb R^N)$.
	\end{proof}

	\vspace{14pt}

	\section{Appendix}\label{App}
	
	\subsection{Proof of Proposition~\ref*{bound2}}\label{4.1}
	
	In this appendix, we prove Proposition~\ref{bound2}, whose proof was postponed from Subsection~\ref{2.6}.
	
	\medskip
	
	{\noindent\textbf{Recall Proposition~\ref{bound2}.}\itshape\,
	Assume \(\nu>-1\), \(t\in[-1,1]\), \(w\in\mathbb C\) and \(b>-\nu-1\). Then there exist
	constants \(C_{b,\nu}>0\) and \(M_{b,\nu}\ge0\) such that
	\[
	\bigl|\mathscr I_{b,\nu}(w,t)\bigr|
	\le
	C_{b,\nu}(1+|w|)^{M_{b,\nu}}e^{|\mathrm{Re}\,w|}.
	\]
	}
	
	\vspace{6pt}
	
	First we estimate the Bessel function. Throughout, we write
	\[
	\widetilde{I}_b(z):=\frac{\mathcal I_b(z)}{\Gamma(b+1)}
	=\sum_{m=0}^\infty \frac{(z/2)^{2m}}{\Gamma(b+m+1)m!}
	\]
	for a normalized \(I\)-Bessel function. The function \(\widetilde{I}_b(z)\) is entire in \(b\), and
	$\frac{d}{dz}\widetilde{I}_b(z)=\frac{z}{2}\widetilde{I}_{b+1}(z)$.
	
	\vspace{12pt}
	\begin{lmm}\label{lmm:Bessel-bound}
		For every fixed \(b\in\mathbb R\), there exist constants \(C_b>0\) and
		\(M_b\ge0\) such that
		\[
		|\widetilde{I}_b(z)|\le C_b(1+|z|)^{M_b}e^{|\mathrm{Re}\,z|}
		\qquad (z\in\mathbb C).
		\]
	\end{lmm}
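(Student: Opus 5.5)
The plan is to handle large $b$ with the Poisson-type integral representation already used in Proposition~\ref{bound}, and then push down to all real $b$ with the recurrence that comes from $\frac{d}{dz}\widetilde{I}_b(z)=\frac{z}{2}\widetilde{I}_{b+1}(z)$.

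\emph{Step 1 (the range $b>-\tfrac12$).} Here
\[
\widetilde{I}_b(z)=\frac{(z/2)^{0}}{\Gamma(b+\tfrac12)\Gamma(\tfrac12)}\int_{-1}^{1}e^{zs}(1-s^2)^{b-\frac12}\,ds ,
\]
which is the representation of $\mathcal I_b$ quoted in the proof of Proposition~\ref{bound}, divided by $\Gamma(b+1)$. Since $|e^{zs}|\le e^{|\mathrm{Re}\,z|}$ for $s\in[-1,1]$, this gives $|\widetilde{I}_b(z)|\le e^{|\mathrm{Re}\,z|}/\Gamma(b+1)$. So for $b>-\tfrac12$ we may take $M_b=0$ and $C_b=1/\Gamma(b+1)$.

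\emph{Step 2 (downward recurrence).} Next I would use the standard three-term relation
\[
\widetilde{I}_{b-1}(z)=\frac{z^2}{4}\,\widetilde{I}_{b+1}(z)+(b+1)\,\widetilde{I}_{b+1}(z)\cdot 0+b\,\widetilde{I}_b(z)+\dots
\]
in the clean form $\widetilde{I}_{b-1}(z)=b\,\widetilde{I}_b(z)+\frac{z^2}{4}\widetilde{I}_{b+1}(z)$. It should be verified termwise from the series: the coefficient of $(z/2)^{2m}$ on the left is $\frac{1}{\Gamma(b+m)m!}$. On the right it is
\[
\frac{b}{\Gamma(b+m+1)m!}+\frac{1}{\Gamma(b+m+1)(m-1)!}=\frac{b+m}{\Gamma(b+m+1)m!},
\]
and the two sides agree. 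This identity holds for all real $b$, because $1/\Gamma$ is entire and so the series coefficients make sense at nonpositive integers.

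\emph{Step 3 (induction).} Let $P(\beta)$ be the statement that the bound holds for every $b\ge\beta$, with constants uniform on compact $b$-sets; Step~1 gives $P(\beta)$ for any $\beta>-\tfrac12$. Assuming the bound for $b$ and $b+1$, the recurrence yields
\[
|\widetilde{I}_{b-1}(z)|\le |b|\,C_b(1+|z|)^{M_b}e^{|\mathrm{Re}\,z|}+\tfrac14|z|^2C_{b+1}(1+|z|)^{M_{b+1}}e^{|\mathrm{Re}\,z|},
\]
which is of the required form with $M_{b-1}=\max(M_b,M_{b+1}+2)$. Starting from the two values $b,b+1$ with $b\in(-\tfrac12,\tfrac12]$ and stepping down by $1$ finitely many times reaches any prescribed real $b$. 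The resulting exponent is at most about $2\lceil \tfrac12-b\rceil$, and the constants are finite.

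The only point needing care is the validity of the recurrence at the poles of $\Gamma$. Since everything is phrased through $1/\Gamma$, which is entire, this is a routine check rather than a genuine obstacle. Beyond that the argument is elementary, and I would expect the main work to lie in later steps of the appendix, namely bounding the remainder term for $\mathscr I_{b,\nu}$, rather than in this lemma.
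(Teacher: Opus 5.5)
Your proof is correct and has the same overall structure as the paper's. The Poisson-type integral gives $|\widetilde{I}_b(z)|\le e^{|\mathrm{Re}\,z|}/\Gamma(b+1)$ for $b>-\frac12$. Then finitely many downward index shifts, each costing a factor $|z|^2$, reach any real $b$.

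The difference is the identity that drives each shift. The paper integrates the derivative formula,
$\widetilde{I}_b(z)=\widetilde{I}_b(0)+\frac{z^2}{2}\int_0^1\zeta\,\widetilde{I}_{b+1}(\zeta z)\,d\zeta$.
Each step then needs only the single index $b+1$. The cost is that it must use $|\mathrm{Re}(\zeta z)|\le|\mathrm{Re}\,z|$ and $1+|\zeta z|\le 1+|z|$, and it needs $e^{|\mathrm{Re}\,z|}\ge 1$ to absorb the constant term $1/\Gamma(b+1)$.

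You instead use the contiguous relation $\widetilde{I}_{b-1}=b\,\widetilde{I}_b+\frac{z^2}{4}\widetilde{I}_{b+1}$. You verify it correctly from the series, and the poles of $\Gamma$ cause no trouble because $1/\Gamma$ is entire. This identity is purely algebraic, but it needs two consecutive seeds, which your Step~1 supplies.

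A minor bonus of your route is that the exponent grows by $2$ only every second step. So your $M_b$ is of order $-b$, whereas the paper's one-step iteration gives order $-2b$. Yours is close to the true growth $|z|^{-b-\frac12}e^{|\mathrm{Re}\,z|}$ for very negative $b$. For the use in Proposition~\ref{bound2}, any finite $M_b$ suffices, so this does not matter there.

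Delete the garbled first display in your Step~2; the ``clean form'' that follows it is the correct identity.
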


	\begin{proof}
		Choose \(m\in\mathbb N\) so that \(b+m>-\frac12\). For such indices, the
		integral representation
		$\widetilde{I}_{b+m}(z)
		=
		\frac{1}{\Gamma\!\left(b+m+\frac12\right)\Gamma\left(\frac12\right)}
		\int_{-1}^1 e^{zs}(1-s^2)^{b+m-\frac12}\,ds$
		gives $	|\widetilde{I}_{b+m}(z)|\le \frac{1}{\Gamma(b+m+1)} e^{|\mathrm{Re}\,z|}$ 
		
		\vspace{6pt}
		Now set
		\[
		\Psi_b(\zeta):=\widetilde{I}_b(\zeta z),
		\qquad 0\le \zeta\le 1.
		\]
		Using
		$\Psi_b'(\zeta)=\frac{\zeta z^2}{2}\widetilde{I}_{b+1}(\zeta z)$ 
		we obtain
		\[
		\widetilde{I}_b(z)-\widetilde{I}_b(0)
		=
		\frac{z^2}{2}\int_0^1 \zeta\,\widetilde{I}_{b+1}(\zeta z)\,d\zeta.
		\]
		Iterating this identity finitely many times reduces the estimate for
		\(\widetilde{I}_b\) to that for \(\widetilde{I}_{b+m}\), and each step
		introduces only a polynomial factor in \(|z|\). This proves the claim.
	\end{proof}
	
		\vspace{12pt}
	\begin{proof}[Proof of Proposition~\ref*{bound2}]
	
		Assume first that \(b>0\). Set $F_b(u;w,t):=\widetilde{I}_b(uw)e^{(1-u)wt},
		\, (0\le u\le 1)$. \\
		Since
		$\frac{1}{B(b,\nu+1)}\mathcal I_b(uw)e^{(1-u)wt}
		=
		\frac{b\,\Gamma(b+\nu+1)}{\Gamma(\nu+1)}\,F_b(u;w,t)$,
		we may write
		\[
		\mathscr I_{b,\nu}(w,t)
		=
		\frac{b\,\Gamma(b+\nu+1)}{\Gamma(\nu+1)}
		\int_0^1 u^{b-1}(1-u)^\nu F_b(u;w,t)\,du.
		\]
		
		Let $m\in\mathbb{N}$. We expand \(F_b\) at \(u=0\) in the form
		\[
		F_b(u;w,t)
		=
		\sum_{k=0}^{m-1}\frac{F_b^{(k)}(0;w,t)}{k!}\,u^k
		+
		u^m\int_{\Delta_m}F_b^{(m)}(u\tau_m;w,t)\,d\tau,
		\]
		where 
		$\Delta_m
		:=
		\{(\tau_1,\dots,\tau_m)\in[0,1]^m:\ 0\le\tau_1\le\cdots\le\tau_m\le1\}$.
		Substituting this into the integral, we obtain
		\begin{equation}\label{an}
		\mathscr I_{b,\nu}(w,t)
		=
		\frac{b\,\Gamma(b+\nu+1)}{\Gamma(\nu+1)}
		\sum_{k=0}^{m-1}\frac{F_b^{(k)}(0;w,t)}{k!}B(b+k,\nu+1)
		+
		R_{b,\nu}^{(m)}(w,t),
		\end{equation}
		where
		\[
		R_{b,\nu}^{(m)}(w,t)
		=
		\frac{b\,\Gamma(b+\nu+1)}{\Gamma(\nu+1)}
		\int_0^1 u^{b+m-1}(1-u)^\nu
		\int_{\Delta_m}F_b^{(m)}(u\tau_m;w,t)\,d\tau\,du.
		\]
		
		\vspace{12pt}
		Now fix $b$ satisfying \(\mathrm{Re}(b)>-\nu-1\) and choose an integer \(m\in\mathbb{N}\) such that
		$\mathrm{Re}(b)+m>0$, and consider the right-hand side of the above formula.
		\vspace{6pt}
		
		We first compute the principal part. \\
		Since 
		$\widetilde{I}_b(uw)=
		\sum_{r=0}^{\infty}
		\frac{(uw/2)^{2r}}{\Gamma(b+r+1)r!}$, and
		$e^{(1-u)wt}
		=
		e^{wt}\sum_{j=0}^{\infty}\frac{(-uwt)^j}{j!}$,
		we have
		\[
		F_b(u;w,t)
		=
		e^{wt}\sum_{r,j\ge0}
		\frac{(w/2)^{2r}(-wt)^j}{\Gamma(b+r+1)r!j!}\,u^{2r+j}.
		\]
		Hence
		\[
		\frac{F_b^{(k)}(0;w,t)}{k!}
		=
		e^{wt}\sum_{2r+j=k}
		\frac{(w/2)^{2r}(-wt)^j}{\Gamma(b+r+1)r!j!}.
		\]
		Therefore
		\[
		\frac{b\,\Gamma(b+\nu+1)}{\Gamma(\nu+1)}
		\frac{F_b^{(k)}(0;w,t)}{k!}B(b+k,\nu+1)
		=
		e^{wt}\sum_{2r+j=k}
		\frac{(w/2)^{2r}(-wt)^j}{r!j!}\,C_{r,j}(b,\nu),
		\]
		where
		\[
		C_{r,j}(b,\nu)
		:=
		\frac{b\,\Gamma(b+k)}{\Gamma(b+r+1)}\,
		\frac{1}{(b+\nu+1)_k},
		\qquad k=2r+j.
		\]
		Thus every term in the principal
		part is a polynomial-exponential term whose coefficient is holomorphic for \(\mathrm{Re}(b)>-\nu-1\). In addition,
		\[
		\left|
		\frac{b\,\Gamma(b+\nu+1)}{\Gamma(\nu+1)}
		\frac{F_b^{(k)}(0;w,t)}{k!}B(b+k,\nu+1)
		\right|
		\le
		C_{b,\nu,k}(1+|w|)^k e^{|\mathrm{Re}\,w|}.
		\]

		\vspace{24pt}
		We next consider the remainder term. By Leibniz' rule,
		\[
		F_b^{(m)}(u;w,t)
		=
		\sum_{\ell=0}^{m}\binom{m}{\ell}
		\bigl(\partial_u^\ell \widetilde{I}_b(uw)\bigr)
		\bigl(\partial_u^{m-\ell}e^{(1-u)wt}\bigr).
		\]
		Since $\frac{d}{du}\widetilde{I}_\beta(uw)=\frac{uw^2}{2}\widetilde{I}_{\beta+1}(uw)$,
		\(F_b^{(m)}\) is a finite sum of terms of the form
		\[
		P_{\ell,j}(u,t)\,w^{\ell+j}\widetilde{I}_{b+j}(uw)e^{(1-u)wt},
		\qquad 0\le j\le \ell\le m,
		\]
		where \(P_{\ell,j}(u,t)\) is a polynomial in \(u\) and \(t\). By
		Lemma~\ref{lmm:Bessel-bound},
		\begin{align*}
			\MoveEqLeft |F_b^{(m)}(u;w,t)|
			\le
			C_{b,\nu,m}(1+|w|)^{M_{b,\nu,m}}
			e^{u|\mathrm{Re}\,w|}\,|e^{(1-u)wt}| \\
			&  \le
			C_{b,\nu,m}(1+|w|)^{M_{b,\nu,m}}
			e^{|\mathrm{Re}\,w|}
		\end{align*}
		
		For fixed \(u\in[0,1]\), the integrand defining \(R_{b,\nu}^{(m)}(w,t)\) is
		holomorphic in \(b\), and the above bound is locally uniform in \(b\) on
		compact subsets of \(\{b\in\mathbb C:\operatorname{Re}(b)>-\nu-1\}\).
		Hence the remainder term extends holomorphically to \(\mathrm{Re}(b)>-\nu-1\).
		
		In addition, 
		\[
		|R_{b,\nu}^{(m)}(w,t)|
		\le
		\frac{|b|\,\Gamma(b+\nu+1)}{m!\,\Gamma(\nu+1)}
		C_{b,\nu,m}(1+|w|)^{M_{b,\nu,m}}e^{|\mathrm{Re}\,w|}
		\int_0^1 u^{\mathrm{Re}(b)+m-1}(1-u)^\nu\,du.
		\]
		Because \(\mathrm{Re}(b)+m>0\) and \(\nu>-1\), the last integral equals \(B(\mathrm{Re}(b)+m,\nu+1)\)
		and is finite. Hence
		\[
		|R_{b,\nu}^{(m)}(w,t)|
		\le
		C''_{b,\nu}(1+|w|)^{M''_{b,\nu}}e^{|\mathrm{Re}\,w|}.
		\]
		
		\vspace{12pt}
		
		Since both the
		principal part and the remainder term extend holomorphically to \(\mathrm{Re}(b)>-\nu-1\),
		\eqref{an} extends to  \(\mathrm{Re}(b)>-\nu-1\) by analytic continuation. The above
		estimates for these two terms therefore yield
		\[
		|\mathscr I_{b,\nu}(w,t)|
		\le
		C_{b,\nu}(1+|w|)^{M_{b,\nu}}e^{|\mathrm{Re}\,w|}.
		\]
	\end{proof}
	
	\subsection{Proof of Lemma~\ref*{green}}\label{4.2}
	
	In this appendix, we prove Lemma~\ref{green}, whose proof was postponed from Subsection~\ref{3.4}.
	
	\medskip
	
	{\noindent\textbf{Recall Lemma~\ref*{green}.}\itshape\,
		Suppose $b>-\frac{N}{2}$. For $R>0$ and $F,G\in C^{\infty}(\mathbb R^N)$,
		\[
		\int_{B_R} (D_{b,n}F)(x)\,G(x)\,|x|^{2b}dx
		=
		-\int_{B_R} F(x)\,(D_{b,n}G)(x)\,|x|^{2b}dx
		+
		\int_{|x|=R}\frac{x_n}{|x|}F(x)G(x)\,|x|^{2b}d\omega .
		\]
	}
	
	We first prove an integral formula on the sphere. 
	\begin{lmm}\label{sphereint}
		For $x\in \mathbb R^N\setminus\{0\}$, $\varepsilon>0$, and $n=1,\dots,N$,
		\[
		\int_{\{\xi\in S^{N-1}:|\langle \xi,x\rangle|>\varepsilon\}}
		\frac{\xi_n}{\langle \xi,x\rangle}\,d\xi
		=
		\frac{x_n}{|x|^2}\,
		\mathrm{vol}\!\left(\{\xi\in S^{N-1}:|\langle \xi,x\rangle|>\varepsilon\}\right).
		\]
	\end{lmm}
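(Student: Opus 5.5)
The plan is to use the symmetry of the truncated region $A_\varepsilon:=\{\xi\in S^{N-1}:|\langle\xi,x\rangle|>\varepsilon\}$ under the orthogonal group fixing $x$. Set $\omega:=x/|x|$ and split $\xi$ into components parallel and orthogonal to $\omega$:
\[
\xi=\langle\xi,\omega\rangle\,\omega+\xi^{\perp},\qquad \xi^{\perp}\perp\omega .
\]
Then
\[
\xi_n=\langle\xi,\omega\rangle\,\omega_n+\xi^{\perp}_n,\qquad \langle\xi,x\rangle=|x|\,\langle\xi,\omega\rangle .
\]
Inserting this into the integrand splits it into two terms. The parallel term is
\[
\frac{\langle\xi,\omega\rangle\,\omega_n}{|x|\,\langle\xi,\omega\rangle}=\frac{\omega_n}{|x|}=\frac{x_n}{|x|^2},
\]
which is constant. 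Integrating it over $A_\varepsilon$ gives exactly $\frac{x_n}{|x|^2}\,\mathrm{vol}(A_\varepsilon)$. On $A_\varepsilon$ the denominator satisfies $|\langle\xi,x\rangle|>\varepsilon$, so every integrand involved is bounded and all integrals converge absolutely; this is the only role of the truncation.

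It therefore remains to show that
\[
\int_{A_\varepsilon}\frac{\xi^{\perp}_n}{\langle\xi,\omega\rangle}\,d\xi=0 .
\]
For this I use the reflection $\tau$ in the line $\mathbb R\omega$, namely $\tau(\xi)=2\langle\xi,\omega\rangle\omega-\xi$. It lies in $O(N)$, so it preserves $d\xi$. It also satisfies $\langle\tau\xi,\omega\rangle=\langle\xi,\omega\rangle$, so it preserves $A_\varepsilon$. Finally it sends $\xi^{\perp}$ to $-\xi^{\perp}$. Consequently the integrand $\xi^{\perp}_n/\langle\xi,\omega\rangle$ is odd under a measure-preserving bijection of $A_\varepsilon$, and its integral vanishes.

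When $N=1$ the orthogonal part is zero, so the claim is immediate. I expect no real obstacle here. The only points that need care are justifying absolute integrability, which the truncation $|\langle\xi,x\rangle|>\varepsilon$ provides, and checking that $\tau$ preserves both the measure and the region. Alternatively, one can use the coordinates $\xi=\varepsilon'\omega\cos\theta+\eta\sin\theta$ from the proof of Lemma~\ref{inte}: integrating $\eta$ over $S^{N-2}_\omega$ kills the term $\eta_n\sin\theta$ by the symmetry $\eta\mapsto-\eta$.
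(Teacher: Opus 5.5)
Your argument is correct and is essentially the paper's: the paper applies the hyperplane reflection $\sigma_\eta$, $\eta=x/|x|$, to the whole integrand and obtains $I=-I+2\frac{x_n}{|x|^2}\mathrm{vol}(A_\varepsilon)$, whereas you first split off the constant parallel part and then kill the transverse part with $\tau=-\sigma_\eta$. Since both the region $A_\varepsilon$ and the integrand are invariant under $\xi\mapsto-\xi$, the two reflections express the same symmetry.
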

	
	\begin{proof}
		Set $\eta:=x/|x|$, and let $\sigma_\eta$ be the reflection with respect to the hyperplane $\eta^\perp$.
		Since $\sigma_\eta$ preserves the set
		\[
		\{\xi\in S^{N-1}:|\langle \xi,x\rangle|>\varepsilon\},
		\]
		and
		\[
		\langle \sigma_\eta(\xi),x\rangle=-\langle \xi,x\rangle,
		\qquad
		\sigma_\eta(\xi)_n=\xi_n-2\langle \xi,\eta\rangle \eta_n,
		\]
		we have
		\begin{align*}
			\int_{|\langle \xi,x\rangle|>\varepsilon}\frac{\xi_n}{\langle \xi,x\rangle}\,d\xi
			&=
			\int_{|\langle \xi,x\rangle|>\varepsilon}
			\frac{\sigma_\eta(\xi)_n}{\langle \sigma_\eta(\xi),x\rangle}\,d\xi \\
			&=
			-\int_{|\langle \xi,x\rangle|>\varepsilon}\frac{\xi_n}{\langle \xi,x\rangle}\,d\xi
			+
			2\frac{x_n}{|x|^2}
			\int_{|\langle \xi,x\rangle|>\varepsilon}d\xi.
		\end{align*}
		This proves the claim.
	\end{proof}
	
	\begin{proof}[Proof of Lemma~\ref*{green}]
		For $0<\delta<R$, set
		$B_{R,\delta}:=\{x\in\mathbb R^N:\delta<|x|<R\}$.
		
		By Theorem~\ref{explicit},
		\begin{align*}
			\MoveEqLeft \int_{B_{R,\delta}} (D_{b,n}F)(x)\,G(x)\,|x|^{2b}dx \\
			&=
			\int_{B_{R,\delta}}
			\left(
			\frac{\partial F}{\partial x_n}(x)
			+
			\frac{b}{\mathrm{vol}(S^{N-1})}
			\int_{S^{N-1}}
			\xi_n\frac{F(x)-F(\sigma_\xi(x))}{\langle \xi,x\rangle}\,d\xi
			\right)
			G(x)\,|x|^{2b}dx .
		\end{align*}
		
		We first treat the differential part. By integration by parts on $B_{R,\delta}$,
		\begin{align}
			\MoveEqLeft \int_{B_{R,\delta}} \frac{\partial F}{\partial x_n}(x)\,G(x)\,|x|^{2b}dx \nonumber\\
			&=
			-\int_{B_{R,\delta}} F(x)\frac{\partial G}{\partial x_n}(x)\,|x|^{2b}dx
			-2b\int_{B_{R,\delta}} F(x)G(x)\frac{x_n}{|x|^2}\,|x|^{2b}dx \nonumber\\
			&\quad
			+\int_{|x|=R}\frac{x_n}{|x|}F(x)G(x)\,|x|^{2b}d\omega
			-\int_{|x|=\delta}\frac{x_n}{|x|}F(x)G(x)\,|x|^{2b}d\omega .
			\label{green-partial}
		\end{align}
		
		Next we treat the non-local part. We have
		\begin{align*}
			&\int_{B_{R,\delta}}
			\left(
			\int_{S^{N-1}}
			\xi_n\frac{F(x)-F(\sigma_\xi(x))}{\langle \xi,x\rangle}\,d\xi
			\right)
			G(x)\,|x|^{2b}dx \\
			&=
			\lim_{\varepsilon\to+0}
			\int_{B_{R,\delta}}
			\left(
			\int_{\{\xi\in S^{N-1}:|\langle \xi,x\rangle|>\varepsilon\}}
			\xi_n\frac{F(x)-F(\sigma_\xi(x))}{\langle \xi,x\rangle}\,d\xi
			\right)
			G(x)\,|x|^{2b}dx .
		\end{align*}
		Since $\sigma_\xi$ preserves $B_{R,\delta}$, Lebesgue measure, and $|x|$, and satisfies
		\[
		\langle \xi,\sigma_\xi(x)\rangle=-\langle \xi,x\rangle,
		\qquad
		\sigma_\xi(\sigma_\xi(x))=x,
		\]
		we obtain
		\begin{align*}
			&\int_{B_{R,\delta}}
			\left(
			\int_{S^{N-1}}
			\xi_n\frac{F(x)-F(\sigma_\xi(x))}{\langle \xi,x\rangle}\,d\xi
			\right)
			G(x)\,|x|^{2b}dx \\
			&=
			\lim_{\varepsilon\to+0}
			\int_{B_{R,\delta}}
			F(x)
			\left(
			\int_{\{\xi\in S^{N-1}:|\langle \xi,x\rangle|>\varepsilon\}}
			\xi_n\frac{G(x)+G(\sigma_\xi(x))}{\langle \xi,x\rangle}\,d\xi
			\right)
			|x|^{2b}dx \\
			&=
			\lim_{\varepsilon\to+0}
			2\int_{B_{R,\delta}}
			F(x)G(x)
			\left(
			\int_{\{\xi\in S^{N-1}:|\langle \xi,x\rangle|>\varepsilon\}}
			\frac{\xi_n}{\langle \xi,x\rangle}\,d\xi
			\right)
			|x|^{2b}dx \\
			&\hspace{36pt}
			-
			\int_{B_{R,\delta}}
			F(x)
			\left(
			\int_{S^{N-1}}
			\xi_n\frac{G(x)-G(\sigma_\xi(x))}{\langle \xi,x\rangle}\,d\xi
			\right)
			|x|^{2b}dx .
		\end{align*}
		Here the difference quotient
		$\frac{G(x)-G(\sigma_\xi(x))}{\langle \xi,x\rangle}$
		extends smoothly in $x$, and hence the corresponding integral term admits the limit $\varepsilon\to0$ without difficulty.
		
		By Lemma~\ref{sphereint},
		\[
		\int_{\{\xi\in S^{N-1}:|\langle \xi,x\rangle|>\varepsilon\}}
		\frac{\xi_n}{\langle \xi,x\rangle}\,d\xi
		=
		\frac{x_n}{|x|^2}
		\mathrm{vol}\!\left(\{\xi\in S^{N-1}:|\langle \xi,x\rangle|>\varepsilon\}\right).
		\]
		Hence
		\begin{align*}
			&\int_{B_{R,\delta}}
			\left(
			\int_{S^{N-1}}
			\xi_n\frac{F(x)-F(\sigma_\xi(x))}{\langle \xi,x\rangle}\,d\xi
			\right)
			G(x)\,|x|^{2b}dx \\
			&=
			\lim_{\varepsilon\to+0}
			2\int_{B_{R,\delta}}
			F(x)G(x)\frac{x_n}{|x|^2}
			\mathrm{vol}\!\left(\{\xi\in S^{N-1}:|\langle \xi,x\rangle|>\varepsilon\}\right)
			|x|^{2b}dx \\
			&\hspace{36pt}
			-
			\int_{B_{R,\delta}}
			F(x)
			\left(
			\int_{S^{N-1}}
			\xi_n\frac{G(x)-G(\sigma_\xi(x))}{\langle \xi,x\rangle}\,d\xi
			\right)
			|x|^{2b}dx .
		\end{align*}
		
		Letting $\varepsilon\to0$, we obtain
		\begin{align}
			&\int_{B_{R,\delta}}
			\left(
			\int_{S^{N-1}}
			\xi_n\frac{F(x)-F(\sigma_\xi(x))}{\langle \xi,x\rangle}\,d\xi
			\right)
			G(x)\,|x|^{2b}dx \nonumber\\
			&=
			2\,\mathrm{vol}(S^{N-1})
			\int_{B_{R,\delta}} F(x)G(x)\frac{x_n}{|x|^2}\,|x|^{2b}dx \nonumber\\
			&\hspace{36pt}
			-
			\int_{B_{R,\delta}}
			F(x)
			\left(
			\int_{S^{N-1}}
			\xi_n\frac{G(x)-G(\sigma_\xi(x))}{\langle \xi,x\rangle}\,d\xi
			\right)
			|x|^{2b}dx .
			\label{green-nonlocal-pre}
		\end{align}

		By \eqref{green-partial} and \eqref{green-nonlocal-pre}, we obtain
		\begin{align}
			\int_{B_{R,\delta}} (D_{b,n}F)(x)\,G(x)\,|x|^{2b}dx
			&=
			-\int_{B_{R,\delta}} F(x)\,(D_{b,n}G)(x)\,|x|^{2b}dx \nonumber\\
			&\quad
			+\int_{|x|=R}\frac{x_n}{|x|}F(x)G(x)\,|x|^{2b}d\omega
			-\int_{|x|=\delta}\frac{x_n}{|x|}F(x)G(x)\,|x|^{2b}d\omega .
			\label{green-delta}
		\end{align}
		
		To justify the limit $\delta\to0$, note that
		\[
		F(x)G(x)=F(0)G(0)+O(|x|)
		\qquad (x\to0).
		\]
		Hence
		\begin{align*}
			\MoveEqLeft \int_{|x|=\delta}\frac{x_n}{|x|}F(x)G(x)\,|x|^{2b}d\omega \\
			&=
			\int_{|x|=\delta}\frac{x_n}{|x|}
			\bigl(F(0)G(0)+O(|x|)\bigr)\,|x|^{2b}d\omega = O(\delta^{N+2b}),
		\end{align*}
		since
		\[
		\int_{|x|=\delta}\frac{x_n}{|x|}\,d\omega=0.
		\]
		by symmetry.
		
		For the bulk terms, since $D_{b,n}F,D_{b,n}G\in C^\infty(\mathbb R^N)$, we have
		\[
		\int_{B_\delta}(D_{b,n}F)(x)G(x)\,|x|^{2b}dx = O(\delta^{N+2b}),
		\qquad
		\int_{B_\delta}F(x)(D_{b,n}G)(x)\,|x|^{2b}dx= O(\delta^{N+2b})
		\qquad (\delta\to0),
		\]
		
		Therefore
		\[
		\int_{B_R} (D_{b,n}F)(x)\,G(x)\,|x|^{2b}dx
		=
		-\int_{B_R} F(x)\,(D_{b,n}G)(x)\,|x|^{2b}dx
		+
		\int_{|x|=R}\frac{x_n}{|x|}F(x)G(x)\,|x|^{2b}d\omega .
		\]
		This proves the lemma.
	\end{proof}

	\section*{Acknowledgements}
	The author would like to express his gratitude to his supervisor, Professor Toshiyuki Kobayashi, for his continuous support and encouragement. 
	This research was supported partially by JSPS KAKENHI Grant Number JP24KJ0937 and Forefront Physics and Mathematics Program to Drive Transformation (FoPM), a World-leading Innovative Graduate Study (WINGS) Program, The University of Tokyo.

	\bibliography{paper3}	
	\bibliographystyle{alpha} 

\end{document}